\documentclass{article}
\usepackage[a4paper, total={6in, 8in}]{geometry}

\usepackage{parskip}
\usepackage{amsfonts}
\usepackage{amsmath}
\usepackage{amsthm}
\usepackage{mathtools}
\usepackage{graphicx}
\usepackage{amssymb}
\usepackage{hyperref}
\usepackage{comment}


\newtheorem{theorem}{Theorem}[section]
\newtheorem{lemma}[theorem]{Lemma}
\newtheorem{proposition}[theorem]{Proposition}
\newtheorem{definition}[theorem]{Definition}

\newcommand{\ZZ}{\mathbb{Z}}
\newcommand{\QQ}{\mathbb{Q}}
\newcommand{\RR}{\mathbb{R}}

\newcommand{\lcm}{\mathrm{lcm}}
\newcommand{\PSL}{\mathrm{PSL}_2(\RR)}

\usepackage{enumitem}

\newlist{propenum}{enumerate}{1}
\setlist[propenum]{label = (\alph*), font=\upshape, ref = \thetheorem.\mbox{(\alph*)}, leftmargin=2em}
\newlist{lemenum}{enumerate}{1}
\setlist[lemenum]{label = (\alph*), font=\upshape, ref = \thetheorem.\mbox{(\alph*)}, leftmargin=2em}

\newcommand{\conv}{\mathop{\scalebox{1.2}{\raisebox{-0.1ex}{$\ast\,$}}}}%
\makeatletter
\newcommand*{\rom}[1]{\expandafter\@slowromancap\romannumeral #1@}
\makeatother

\newcommand{\Mod}[1]{\ (\mathrm{mod}\ #1)}

\title{Powers of commutators in infinite groups}
\author{Daan Heus\\\\\textit{\small Mathematical Institute, Leiden University}, \href{mailto:daanheus@live.nl}{\small \texttt{daanheus@live.nl}}}
\date{}

\begin{document}

\maketitle

\begin{abstract}
\noindent Given elements $x,u,z$ in a finite group $G$ such that $z$ is the commutator of $x$ and $u$, and the orders of $x$ and $z$ divide respectively integers $k,m \geq 2$, and given an integer $r$ that is coprime to $k$ and $m$, there exists $w \in G$ such that the commutator of $x^r$ and $w$ is conjugate to $z^r$. 
If instead we are given elements $x,y,z \in G$ such that $xy = z$, whose respective orders divide integers $k,l,m \geq 2$, and are given an integer $r$ that is coprime to $k,l$ and $m$, then there exist $x'$, $y'$ and $z'$ conjugate to respectively $x^r$, $y^r$ and $z^r$ such that $x'y' = z'$.
In this paper we completely answer the natural question for which values of $k,l,m,r$ every group has these properties. The proof uses combinatorial group theory and properties of the projective special linear group $\PSL$. 
\\\\
\noindent \textbf{Keywords.} Infinite groups, Commutators, Conjugacy classes, Combinatorial group theory, Projective special linear group.
\\\\
\noindent \textbf{2020 Mathematics Subject Classification.} 20F12, 20E45.
\end{abstract}

\section{Introduction}

A \textit{Honda} group is a group in which every generator of every subgroup generated by a commutator is itself also a commutator. In algebraic terms we say that a group $G$ is Honda if for every integer $r$ and all $x,u,z \in G$ one has 
$$[x,u] = z, \, \langle z \rangle = \langle z^r \rangle \Rightarrow \exists v,w \in G : [v,w] = z^r.$$
Here we use the convention that $[x,u] = xux^{-1}u^{-1}$. 
In 1953, K. Honda \cite{Honda} proved that every finite group is Honda (Proposition \ref{Proposition: Finite groups qb, qh, h} below). A natural question to ask is if \textit{every} group is Honda.
However, in 1977 (\cite{Pride}, page 488, Result (C)) S.J. Pride gave a family of groups that he showed were not Honda. 
On the other hand, B. Martin proved in \cite{Martin} that many linear algebraic groups are Honda.

We write $\sim$ for ``is conjugate to''. With the same proof that K. Honda used to show that every finite group is Honda, one can also show that every finite group $G$ has the following property: for every integer $r$ and all $x,u,z \in G$ one has
$$[x,u] = z, \, \langle x \rangle = \langle x^r\rangle, \, \langle z \rangle = \langle z^r \rangle \Rightarrow \exists w \in G: [x^r,w] \sim z^r.$$
We call such a group a \textit{quasi-Honda} group.
It is easy to see that every quasi-Honda torsion group is Honda (Proposition \ref{Proposition: torsion}). The groups given by S.J. Pride are quasi-Honda (Proposition \ref{Proposition: Pride groups quasi-Burnside}), so in general quasi-Honda does not imply Honda. One may now wonder if \textit{every} group is quasi-Honda. 

To prove that every finite group is Honda, K. Honda applied an argument that W. Burnside had used in 1911 to prove that every finite group $G$ has the following property:
for every integer $r$ and all $x,y,z \in G$ one has 
$$xy=z, \, \langle x \rangle = \langle x^r \rangle, \, \langle y \rangle = \langle y^r \rangle, \, \langle z \rangle = \langle z^r \rangle \Rightarrow \exists x',y',z' \in G : x'y' = z', \, x' \sim x^r, \, y' \sim y^r, \, z' \sim z^r$$
(\cite{Burnside}, Chapter \rom{15}, Theorem \rom{7}). We will call a group with this property a \textit{quasi-Burnside} group. Since it is easy to prove that every quasi-Burnside group is quasi-Honda (Proposition \ref{Proposition: quasi-Burnside implies quasi-Honda}), 
Honda's theorem is a consequence of Burnside's result.
Just as in the case of the quasi-Honda property one may wonder if every group is quasi-Burnside. 
The answer to both of these questions is negative, but we were able to answer a more refined question, which we proceed to formulate. 

If, in the definitions of quasi-Honda and quasi-Burnside, one has $r = -1$, then one can give explicit formulas for $w$ and for $x', y', z'$; e.g. $w = xu$ and $x' = x^{-1}$, $y' = xy^{-1}x^{-1}$, $z' = z^{-1}$. However, for general $r$ it is only meaningful to ask for such formulas if one guarantees that the conditions $\langle x\rangle = \langle x^r\rangle$, $\langle z\rangle = \langle z^r\rangle$, and (in the quasi-Burnside case) $\langle y\rangle = \langle y^r\rangle$ are satisfied. If $r \notin \{\pm 1\}$, then these conditions are equivalent to the existence of positive integers $k,l,m$ that are coprime to $r$ for which one has $x^k = 1$, ($y^l = 1$,) and $z^m = 1$. This leads to the following definitions. 
Given a ring $R$, we write $R^*$ for its unit group. Let $k,l,m \geq 2$ be integers and let $r \in (\mathbb{Z}/\lcm(k,l,m)\mathbb{Z})^*$. A group $G$ is $(k,l,m,r)$\textit{-quasi-Burnside} if for all $x,y,z \in G$ one has
$$xy = z, \, x^k = y^l = z^m = 1 \Rightarrow \exists x',y',z' \in G: x'y' = z', \, x' \sim x^r, \, y' \sim y^r, z' \sim z^r.$$
Now let $r \in (\mathbb{Z}/\lcm(k,m)\mathbb{Z})^*$. A group $G$ is $(k,m,r)$\textit{-quasi-Honda} if for all $x,u,z \in G$ one has
$$[x,u] = z, \, x^k = z^m = 1 \Rightarrow \exists w \in G: [x^r,w] \sim z^r.$$ 

Clearly, a group $G$ is quasi-Burnside if and only if for all $k,l,m$ and every $r \in (\mathbb{Z}/\lcm(k,l,m)\mathbb{Z})^*$ the group $G$ is $(k,l,m,r)$-quasi-Burnside. 
Analogously, a group $G$ is quasi-Honda if and only if for all $k,m$ and every $r \in (\mathbb{Z}/\lcm(k,m)\mathbb{Z})^*$ the group $G$ is $(k,m,r)$-quasi-Honda. 

In this paper we classify for which values of $k,l,m,r$ respectively $k,m,r$ every group is $(k,l,m,r)$-quasi-Burnside respectively $(k,m,r)$-quasi-Honda. Our results are summarized in the two theorems below.
If $\gcd(k,m) \le 2$, and we are given $r \in (\ZZ/\lcm(k,m)\ZZ)^*$, then we write $r^*$ for the unique element in $(\ZZ/\lcm(k,m)\ZZ)^*$ with $r^* \equiv r \pmod{k}$ and $r^* \equiv -r \pmod{m}$. 
This element $r^*$ exists by Lemma \ref{Lemma: Chinese remainder theorem}. 

\begin{theorem}
\label{introduction theorem qH}
Let $k,m \geq 2$ be integers and let $r \in (\ZZ/\lcm(k,m)\ZZ)^*$. Then the following are equivalent.
\begin{enumerate}[label=\textup{(\arabic*)}, leftmargin=2em]
\item Every group is $(k,m,r)$-quasi-Honda.
\item Every group is $(k,k,m,r)$-quasi-Burnside, or both $\gcd(k,m) \leq 2$ holds and every group is $(k,k,m,r^*)$-quasi-Burnside.
\item We have $\frac{2}{k} + \frac{1}{m} \geq 1$, or $r \in \{\pm 1\}$, or both $\gcd(k,m) \leq 2$ and $r^* \in \{ \pm 1\}$.
\end{enumerate}
\end{theorem}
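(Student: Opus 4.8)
\emph{Overview.} The plan is to prove the cycle $(2)\Rightarrow(1)\Rightarrow(3)\Rightarrow(2)$; the outer two implications are soft, and the essential work --- together with all of the use of $\PSL$ --- sits in $(1)\Rightarrow(3)$. For $(2)\Rightarrow(1)$ I would run the commutator/product dictionary behind Proposition~\ref{Proposition: quasi-Burnside implies quasi-Honda}, keeping track of exponents. Given $[x,u]=z$ with $x^k=z^m=1$, put $y=ux^{-1}u^{-1}$; then $xy=z$ and $x^k=y^k=z^m=1$, so $(k,k,m,r)$-quasi-Burnside gives $x'y'=z'$ with $x'\sim x^r$, $y'\sim y^r\sim x^{-r}$, $z'\sim z^r$. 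Since $(y')^{-1}\sim x^r\sim x'$, writing $(y')^{-1}=gx'g^{-1}$ yields $z'=x'y'=[x',g]$, and conjugating $x'$ back onto $x^r$ turns this into $[x^r,w]\sim z^r$. When $\gcd(k,m)\le2$ one instead feeds the identity $z^{-1}=[u,x]=(uxu^{-1})\cdot x^{-1}$ into $(k,k,m,r^*)$-quasi-Burnside and uses $x^{r^*}=x^r$ and $(z^{-1})^{r^*}=z^r$ (which is precisely where the congruences $r^*\equiv r\pmod k$, $r^*\equiv -r\pmod m$ enter) to again obtain $[x^r,w]\sim z^r$ from the second alternative of $(2)$.

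\emph{The implication $(3)\Rightarrow(2)$.} I would split into the three cases. If $r\in\{\pm1\}$, or $\gcd(k,m)\le2$ and $r^*\in\{\pm1\}$, the explicit formulas from the introduction (for exponent $-1$: $x'=x^{-1}$, $y'=xy^{-1}x^{-1}$, $z'=z^{-1}$) exhibit every group as $(k,k,m,r)$- resp.\ $(k,k,m,r^*)$-quasi-Burnside. If $\tfrac2k+\tfrac1m>1$, the von Dyck group $D(k,k,m)=\langle a,b\mid a^k=b^k=(ab)^m=1\rangle$ is finite, hence quasi-Burnside by Proposition~\ref{Proposition: Finite groups qb, qh, h}; for any group $G$ and any $x,y,z\in G$ with $xy=z$ and $x^k=y^k=z^m=1$, the subgroup $\langle x,y\rangle$ is a quotient of $D(k,k,m)$, hence finite, hence quasi-Burnside, so the required $x',y',z'$ already lie in $\langle x,y\rangle\subseteq G$. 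Finally, if $\tfrac2k+\tfrac1m=1$ then necessarily $(k,m)\in\{(3,3),(4,2)\}$, so $\lcm(k,m)\in\{3,4\}$ and $r\in(\ZZ/\lcm(k,m)\ZZ)^*=\{\pm1\}$, which reduces to the first case.

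\emph{The implication $(1)\Rightarrow(3)$: the construction.} I would argue by contraposition: assume $\tfrac2k+\tfrac1m<1$, $r\notin\{\pm1\}$, and --- when $\gcd(k,m)\le2$ --- also $r^*\notin\{\pm1\}$; the task is to produce one group $G$ with $[x,u]=z$, $x^k=z^m=1$, $z$ of order exactly $m$, admitting no $w$ with $[x^r,w]\sim z^r$. I would take the universal group $G=\langle x,u\mid x^k,\ [x,u]^m\rangle$. Setting $a=x$, $a'=uxu^{-1}$ and $z=[x,u]=a(a')^{-1}$, a short Tietze calculation identifies $G$ with the HNN extension (stable letter $u$) of $\Delta:=\langle a,a'\mid a^k,(a')^k,(a(a')^{-1})^m\rangle\cong D(k,k,m)$ along the isomorphism $\langle a\rangle\xrightarrow{\sim}\langle a'\rangle$ of its two order-$k$ cone-point subgroups; since $\tfrac2k+\tfrac1m<1$, $\Delta$ is an infinite cocompact Fuchsian group --- a lattice in $\PSL$ --- in which $z$ is the order-$m$ cone-point generator, so $x$ and $z$ have orders exactly $k$ and $m$ in $G$. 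Now suppose $[x^r,w]\sim z^r$. Because $\langle z^r\rangle=\langle z\rangle$ is a $G$-conjugate of the order-$m$ cone-point subgroup of $\Delta$, which is never an edge stabiliser, $z^r$ fixes a unique vertex of the Bass--Serre tree; after conjugating the relation so that it reads $[x^r,w]=z^r$ with $z^r$ the cone-point generator inside $\Delta$, the fact that a product of two tree-elliptics with disjoint fixed subtrees is hyperbolic forces both $x^r$ and $wx^{-r}w^{-1}$ to lie in $\Delta$. Hence $z^r$ is conjugate in $\Delta$ to a product $\alpha\beta$ with $\alpha$ conjugate in $\Delta$ to $a^{r}$ or $(a')^{r}$ and $\beta$ to $a^{-r}$ or $(a')^{-r}$ --- that is, $z^r$ lies in a product of two of the elliptic conjugacy classes of the Fuchsian group $\Delta(k,k,m)$.

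\emph{The implication $(1)\Rightarrow(3)$: the obstruction, and the main difficulty.} It remains to rule this out. If $z^r=\alpha\beta$ as above, then $\langle\alpha,\beta\rangle$ is a quotient of $D(k,k,m)$ via a homomorphism $\phi$ into $\Delta(k,k,m)\subseteq\PSL$ carrying the two order-$k$ generators to elliptics of order $k$ and rotation number $\pm r/k$, and their product to an elliptic of order $m$ and rotation number $\pm r/m$. But the $(k,k,m)$-triangle orbifold is rigid (its Teichmüller space is a point), and the only finite subgroups of $\Delta(k,k,m)$ are cyclic; an analysis of $\mathrm{Hom}\bigl(D(k,k,m),\PSL\bigr)$ --- discarding the finite-image case (which would force $z^r$ into the wrong cone-point class, impossible even when $k\mid m$ or $m\mid k$) and the remaining non-faithful possibilities (infinite proper quotients of the triangle group re-embedding into it) --- should show that $\phi$ is, up to the finite symmetry group of the orbifold and complex conjugation, the identity, so the rotation numbers in play are $\pm1/k$ and $\pm1/m$. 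Therefore $r\equiv\pm1\pmod k$ and $r\equiv\pm1\pmod m$; by the Chinese-remainder fact underlying $r^*$ (a common solution of $r\equiv1\pmod k$ and $r\equiv-1\pmod m$ forces $\gcd(k,m)\le2$), this means $r\in\{\pm1\}$, or else $\gcd(k,m)\le2$ and $r^*\in\{\pm1\}$ --- contrary to hypothesis. I expect this rigidity step to be the main obstacle: one must exclude \emph{every} $w\in G$, hence every non-faithful $\phi$, and then check that the congruences $r\equiv\pm1\pmod k$, $r\equiv\pm1\pmod m$ assemble exactly into the three cases of $(3)$; note also that the sign ambiguities responsible for the parameter $r^*$ --- they come from $[a,b]^{-1}=[b,a]$, which allows $z$ to be replaced by $z^{-1}$ --- can matter only when $\gcd(k,m)\le2$, which is exactly where $r^*$ appears in $(3)$.
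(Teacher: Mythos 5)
Your implications (2)$\Rightarrow$(1) and (3)$\Rightarrow$(2) are correct and elementary, and the reduction you set up inside (1)$\Rightarrow$(3) is a genuinely different route from the paper's: writing $H_{k,m}=\langle x,u\mid x^k,[x,u]^m\rangle$ as an HNN extension of $\Delta\cong B_{k,k,m}$ over $\langle a\rangle\cong\ZZ/k\ZZ$ and using the Bass--Serre tree (a product of two elliptics with disjoint fixed sets is hyperbolic, uniqueness of the vertex fixed by $z^r$, Collins-type conjugacy for torsion elements of the base) correctly reduces the question to whether $z^r\in C\cdot D$ inside $\Delta$, where $C$ is the $\Delta$-class of $a^r$ or $(a')^r$ and $D$ that of $a^{-r}$ or $(a')^{-r}$. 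This is exactly the four-case fork the paper reaches with its reduced-word machinery (Proposition \ref{Proposition: elliminating b v2}), so up to that point you have a legitimate alternative to \S\ref{Section: Reduced words}--\ref{Section: Passing to a subgroup}.

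The gap is the final step. The two ``unmixed'' cases ($C,D$ coming from the same order-$k$ cone subgroup) are indeed killed by a one-line rotation-number computation, as in Proposition \ref{Proposition: PSL not hemi-Burnside}. But for the two ``mixed'' cases your rigidity argument does not work, and no argument that only inspects the $\PSL$-conjugacy classes of $\alpha$, $\beta$, $z^r$ in the fixed discrete realization of $\Delta$ can work. Take $(k,m,r)=(7,5,2)$: condition (3) fails, yet $a^2\sim\sigma_{2/7}$, $(a')^{-2}\sim\sigma_{2/7}$, $z^{2}\sim\sigma_{-2/5}$, and since $\tfrac27+\tfrac27+\tfrac25<1$, Lemma \ref{Lemma: Orevkov} shows that in $\PSL$ there \emph{do} exist representatives of these classes with $\alpha\beta$ conjugate to $z^2$. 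So ``the rotation numbers in play'' yield no contradiction; what has to be excluded is the configuration inside the discrete group with $\alpha,\beta$ being $\Delta$-conjugates of $a^{\pm r},(a')^{\pm r}$, and Teichm\"uller/Mostow rigidity of the $(k,k,m)$ orbifold says nothing about that: your $\varphi\colon D(k,k,m)\to\Delta$, $A\mapsto\alpha$, $A'\mapsto\beta^{-1}$ is in general neither injective nor surjective (its image may well be a free product $\ZZ/k\ZZ\ast\ZZ/k\ZZ$, say), and rigidity classifies discrete faithful representations up to conjugacy, not such homomorphisms; the phrase ``an analysis of $\mathrm{Hom}(D(k,k,m),\PSL)$ should show $\varphi$ is the identity'' is precisely the unproved crux. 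The paper closes this hole differently: a mixed-case solution in $\Delta=B_{k,k,m}$ is \emph{universal} (Proposition \ref{Proposition: universal q-Burnside group}), hence pushes forward to $\PSL$-triples with \emph{other} rotation numbers $(a/k,b/k,c/m)$, and the counting argument behind $M_{\PSL}=\{\pm1\}$ (Lemmas \ref{lemma: ceiling}--\ref{Lemma: 1<c<m-2}, Proposition \ref{Proposition: If r in Mklm, then r in pm 1}, i.e.\ the hard direction of Theorem \ref{introduction theorem qB}) exhibits a triple that $r$ moves out of $H\cap(S\cup -S)$; the mixed case with $\gcd(k,m)>2$ additionally needs the abelianization argument of Lemma \ref{Lemma: passing to subgrp gcd geq 3}. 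If you replace your rigidity step by identifying the two mixed cases with ``every group is $(k,k,m,r)$- resp.\ $(k,k,m,r^*)$-quasi-Burnside'' via this universal property and then quoting Theorem \ref{introduction theorem qB}, your HNN/Bass--Serre route becomes a complete proof parallel to the paper's; as written, the essential $\PSL$/number-theoretic input is missing.
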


The equivalence (1) $\Leftrightarrow$ (2) of Theorem \ref{introduction theorem qH} is proved at the end of \S \ref{Section: Passing to a subgroup}, and the equivalence (2) $\Leftrightarrow$ (3) of Theorem \ref{introduction theorem qH} is a direct consequence of Theorem \ref{introduction theorem qB}, which we shall prove first. We write $\PSL$ for the projective special linear group of degree 2 over the real numbers.

\begin{theorem}
\label{introduction theorem qB}
Let $k,l,m \geq 2$ be integers and let $r \in (\ZZ/\lcm(k,l,m)\ZZ)^*$. Then the following are equivalent.
\begin{enumerate}[label=\textup{(\arabic*)}, leftmargin=2em]
\item Every group is $(k,l,m,r)$-quasi-Burnside.
\item The group $\PSL$ is $(k,l,m,r)$-quasi-Burnside.
\item We have $\frac{1}{k} + \frac{1}{l} + \frac{1}{m} \geq 1$ or $r \in \{\pm 1\}$.
\end{enumerate}
\end{theorem}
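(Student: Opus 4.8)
The implication $(1)\Rightarrow(2)$ is immediate, so the content is in $(2)\Rightarrow(3)$ and $(3)\Rightarrow(1)$, and I would close the loop $(1)\Rightarrow(2)\Rightarrow(3)\Rightarrow(1)$. For $(3)\Rightarrow(1)$ I would first dispose of the easy half: if $r\in\{\pm 1\}$ the explicit formulas already recorded in the introduction ($x'=x$, $y'=y$, $z'=z$ for $r=1$; $x'=x^{-1}$, $y'=xy^{-1}x^{-1}$, $z'=z^{-1}$ for $r=-1$) exhibit every group as $(k,l,m,r)$-quasi-Burnside. For the remaining case $\tfrac1k+\tfrac1l+\tfrac1m\ge 1$ I would pass to the von Dyck group $D=\langle a,b,c\mid a^k=b^l=c^m=abc=1\rangle$ via the reduction: \emph{every} group is $(k,l,m,r)$-quasi-Burnside if and only if $D$ is. One direction is trivial; for the other, given $x,y,z$ in an arbitrary $G$ with $xy=z$ and $x^k=y^l=z^m=1$, the rule $a\mapsto x$, $b\mapsto y$ (hence $c\mapsto z^{-1}$) defines a homomorphism $\theta\colon D\to G$, and applying the $(k,l,m,r)$-quasi-Burnside property of $D$ to the triple $(a,b,c^{-1})$ and transporting the resulting triple along $\theta$ produces the required $x',y',z'$. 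Now if $\tfrac1k+\tfrac1l+\tfrac1m>1$ then $D$ is finite (a spherical von Dyck group), so Burnside's theorem shows $D$ is $(k,l,m,r)$-quasi-Burnside; and if $\tfrac1k+\tfrac1l+\tfrac1m=1$ then $\lcm(k,l,m)\in\{3,4,6\}$, whence $(\ZZ/\lcm(k,l,m)\ZZ)^*=\{\pm 1\}$ and we are back in the case $r\in\{\pm 1\}$.

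For $(2)\Rightarrow(3)$ I would argue via an explicit description of $(k,l,m,r)$-quasi-Burnside-ness of $\PSL$. The geometric ingredient is a classification of triples of torsion elements of $\PSL$ with product $1$: every torsion element of $\PSL$ is elliptic, its $\PSL$-conjugacy class is determined by its rotation number $\rho\in\QQ/\ZZ$, and three nontrivial elliptic elements $g_1,g_2,g_3$ with $g_1g_2g_3=1$ and rotation numbers $u_1,u_2,u_3\in(0,1)$ exist if and only if $u_1+u_2+u_3\notin(1,2)$. I would prove this by writing $g_1,g_2$ as products of reflections in the sides of a hyperbolic triangle (or, when they share a fixed point, directly) and tracking the rotation number of $g_1g_2$ as the triangle degenerates; equivalently one can use the translation-number quasimorphism on the universal cover of $\PSL$ and a Milnor--Wood-type inequality, with care taken over the behaviour at the endpoints $1$ and $2$. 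Applying this with $g_3=z^{-1}$, and treating degenerate triples (where some of $x,y,z$ is trivial) at once, one obtains: $\PSL$ is $(k,l,m,r)$-quasi-Burnside if and only if for all integers $1\le p<k$, $1\le q<l$, $1\le s<m$, whenever $\tfrac pk+\tfrac ql+\tfrac sm\notin(1,2)$ one also has $\tfrac{p'}{k}+\tfrac{q'}{l}+\tfrac{s'}{m}\notin(1,2)$, where $p'=rp\bmod k\in\{1,\dots,k-1\}$ and similarly $q',s'$.

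It then remains to check that this numerical condition is equivalent to $(3)$. In one direction: if $r\in\{\pm 1\}$ it holds because $r=-1$ sends $(p,q,s)$ to $(k-p,l-q,m-s)$, replacing a sum $t$ by $3-t$; and if $\tfrac1k+\tfrac1l+\tfrac1m\ge 1$ then a short inspection of the spherical and Euclidean lists shows that either $\lcm(k,l,m)\in\{3,4,6\}$ (so $r\in\{\pm 1\}$) or every sum $\tfrac pk+\tfrac ql+\tfrac sm$ with $p,q,s$ in range already lies in $(1,2)$, so the condition is vacuously true. In the other direction one must show: if $\tfrac1k+\tfrac1l+\tfrac1m<1$ and $r\not\equiv\pm 1\pmod{\lcm(k,l,m)}$, there is a ``bad'' triple $(p,q,s)$ — one with $\tfrac pk+\tfrac ql+\tfrac sm\notin(1,2)$ but $\tfrac{p'}{k}+\tfrac{q'}{l}+\tfrac{s'}{m}\in(1,2)$ — which then gives $x,y,z\in\PSL$ with $xy=z$, $x^k=y^l=z^m=1$ and no valid $x',y',z'$, so that $\PSL$ is not $(k,l,m,r)$-quasi-Burnside. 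For instance, for $(k,l,m)=(7,7,7)$ and $r=2$ the elliptic triple with all three rotation numbers equal to $\tfrac27$ works, since $\tfrac27+\tfrac27+\tfrac27=\tfrac67\le 1$ while $\tfrac47+\tfrac47+\tfrac47=\tfrac{12}7\in(1,2)$.

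The crux, and the step I expect to be hardest, is this last construction: producing a bad triple for \emph{every} $(k,l,m)$ with $\tfrac1k+\tfrac1l+\tfrac1m<1$ and \emph{every} unit $r\not\equiv\pm 1$. Hyperbolicity guarantees ample room strictly below the forbidden window $(1,2)$, and the hypothesis $r\not\equiv\pm 1$ ought to carry some small triple into the window; but making this uniform requires a case analysis on the residues of $r$ modulo $k$, $l$ and $m$, the awkward case being when $r$ is $\equiv 1$ modulo one of them and $\equiv -1$ modulo another, so that no single coordinate sees a genuinely generic residue. Establishing the geometric classification lemma cleanly — in particular pinning down the open interval $(1,2)$ and the endpoint behaviour — is the other place requiring care, though it amounts to a standard computation with products of elliptic isometries of the hyperbolic plane.
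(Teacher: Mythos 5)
Your outline of the implications $(1)\Rightarrow(2)$ and $(3)\Rightarrow(1)$ matches the paper's route (explicit formulas for $r\in\{\pm1\}$, the Euclidean case via $\lcm(k,l,m)\in\{3,4,6\}$, and reduction to the finite von Dyck group together with Burnside's counting theorem — note only that Burnside's theorem needs $s$ coprime to the exponent, so you must first replace $r$ by some $r'\equiv r$ modulo the relevant orders that is coprime to the exponent, as in the paper's Lemma \ref{Lemma: 5}). Likewise your reduction of $(2)\Rightarrow(3)$ to a purely numerical condition — torsion classes in $\PSL$ indexed by rotation numbers, and the criterion that nontrivial elliptic classes with rotation numbers $u_1,u_2,u_3\in(0,1)$ admit a triple with product $1$ exactly when $u_1+u_2+u_3\notin(1,2)$ — is exactly the paper's Lemma \ref{Lemma: Orevkov} (quoted from Orevkov) combined with Lemmas \ref{Lemma: PSL TOR}--\ref{Lemma: BPSL2R}.

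However, there is a genuine gap, and it is precisely the step you yourself flag as the crux: you never prove that for \emph{every} hyperbolic triple $(k,l,m)$ and \emph{every} unit $r\not\equiv\pm1\pmod{\lcm(k,l,m)}$ the numerical condition fails. A single example such as $(7,7,7)$, $r=2$, plus the remark that a case analysis on the residues of $r$ is needed, does not constitute a proof; in particular the mixed case you mention ($r\equiv 1$ modulo some of $k,l,m$ and $\equiv-1$ modulo others, yet $r\neq\pm1$ globally) is exactly where a coordinatewise search for a ``bad'' triple is delicate. The paper does not construct bad triples explicitly at all; instead it runs a counting argument. After arranging $m\geq\max\{k,l\}$ and replacing $r$ by $\pm r$ so that the fibre over $(\tfrac1k,\tfrac1l)$ is carried into $S$ rather than $-S$, it counts the points $z\in\tfrac1m\ZZ/\ZZ$ with $(\tfrac1k,\tfrac1l,z)$ in the good region via a ceiling formula (Lemma \ref{lemma: ceiling}); comparing the counts for $(\tfrac1k,\tfrac1l)$ and $(\tfrac{r}{k},\tfrac{r}{l})$ forces $r\equiv1\pmod{\lcm(k,l)}$ (Lemma \ref{Proposition r equiv 1 mod lcm(k,m)}), and then an elementary lemma about when $\{1,\dots,c\}\equiv\{r,2r,\dots,cr\}\pmod m$ forces $r\equiv1\pmod m$ (Lemma \ref{Lemma: 1<c<m-2}), giving $r\in\{\pm1\}$. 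Some argument of this kind (or a complete, uniform construction of bad triples) is required before your $(2)\Rightarrow(3)$ can be considered proved; as written, the hardest part of the theorem is asserted rather than established.
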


The implication (1) $\Rightarrow$ (2) of Theorem \ref{introduction theorem qB} is trivial, the implication (2) $\Rightarrow$ (3) is proved in \S \ref{Section: Projective special linear group}, just above Proposition \ref{Proposition: PSL not hemi-Burnside}, and the implication (3) $\Rightarrow$ (1) is proved at the end of \S \ref{Section: Quasi-Burnside and quasi-Honda groups}.

The proof of Theorem \ref{introduction theorem qB} uses facts about products of triples of conjugacy classes in $\PSL$, and the proof of Theorem \ref{introduction theorem qH} uses a special notion of a reduced word in the free product of a group and an infinite cyclic group.

For the entirety of this paper we fix three integers $k,l,m \geq 2$.

In \S \ref{Section: Quasi-Burnside and quasi-Honda groups} we prove some basic facts about $(k,l,m,r)$-quasi-Burnside and $(k,m,r)$-quasi-Honda groups.
We show that every group being $(k,l,m,r)$-quasi-Burnside is equivalent to the existence of $g,h \in B_{k,l,m} := \langle a,c \mid a^k = (a^{-1}c)^l = c^m = 1 \rangle$ such that $a^r \cdot g(a^{-1}c)^rg^{-1} = hc^rh^{-1}$ (Proposition \ref{Proposition: universal q-Burnside group}), and we show that every group being $(k,m,r)$-quasi-Honda is equivalent to the existence of $d,e \in H_{k,m}:= \langle a,b,c \mid [a,b] = c, \, a^k = c^m = 1 \rangle$ such that $[a^r,d] = ec^re^{-1}$ (Proposition \ref{Proposition: universal q-Honda group}). Such elements $g,h$ and $d,e$, if they exist, are essentially the formulas that we asked for.
The group $B_{k,l,m}$ is known as a \textit{von Dyck group} or a \textit{triangle group} in the literature. The von Dyck groups are usually studied via the group $\PSL$, so it is not surprising that $\PSL$ plays an important role in this paper. 

As stated in Proposition \ref{Proposition: r st G is klmr qB qH is subgroup}, one readily checks that the elements $r$ in $(\ZZ/\lcm(k,m)\ZZ)^*$ respectively $(\ZZ/\lcm(k,l,m)\ZZ)^*$ for which a given group $G$ is $(k,m,r)$-quasi-Honda respectively $(k,l,m,r)$-quasi-Burnside form a subgroup. In the quasi-Burnside case, we denote that subgroup of $(\ZZ/\lcm(k,l,m)\ZZ)^*$ by $M_G$.
In \S \ref{Section: Projective special linear group} we determine $M_G$ for $G = \PSL$ (Proposition \ref{Proposition: If r in Mklm, then r in pm 1}), allowing us to prove the implication (2) $\Rightarrow$ (3) of Theorem \ref{introduction theorem qB}. Then, in \S \ref{Section: universal groups}, we examine the structure of the groups $H_{k,m}, B_{k,l,m}$ and show that $B_{k,k,m}$ may be viewed as a subgroup of $H_{k,m}$ (Proposition \ref{Proposition: embedding}), which is the key to the proof of the equivalence (1) $\Leftrightarrow$ (2) of Theorem \ref{introduction theorem qH}. 
In \S \ref{Section: Reduced words} we define the special notion of a reduced word mentioned above, and prove some of its properties. In \S \ref{Section: Passing to a subgroup} this notion enables us to prove Proposition \ref{Proposition: elliminating b v2}, which we then use, together with the theory from \S \ref{Section: universal groups}, to prove the equivalence (1) $\Leftrightarrow$ (2) of Theorem \ref{introduction theorem qH}.

\section{Quasi-Burnside and quasi-Honda groups}
\label{Section: Quasi-Burnside and quasi-Honda groups}

As mentioned in the introduction, we fix integers $k,l,m \geq 2$ for the entirety of this paper. 

In this section we prove some basic facts about $(k,l,m,r)$-quasi-Burnside and $(k,m,r)$-quasi-Honda groups, and at the end we prove the implication (3) $\Rightarrow$ (1) of Theorem \ref{introduction theorem qB}.

Let $x,y$ be two elements of a group $G$. The notation $x \sim y$ means ``$x$ is conjugate to $y$''. We write $\mathrm{ord}(x)$ for the order of $x$, and we write ${}^y x$ for $yxy^{-1}$. Given an integer $n$, we have $(yxy^{-1})^n = yx^ny^{-1}$, hence the notation ${}^y x^n$ is unambiguous. 

\begin{proposition}
Let $G$ be a group, let $x,y,z,u \in G$, and let $r$ be an integer.
\begin{propenum}
    \item \label{Proposition: x or z infinite order quasi-Honda}
    Suppose $\mathrm{ord}(x) = \infty$ or $\mathrm{ord}(z)= \infty$. If $[x,u] = z$, $ \langle x \rangle = \langle x^r\rangle$, $ \langle z \rangle = \langle z^r \rangle$, then there exists $w \in G$ such that $[x^r,w] = z^r$.
    \item \label{Proposition: x or y or z infinite order quasi-Brunside}
    Suppose $\mathrm{ord}(x) = \infty$ or $\mathrm{ord}(y)= \infty$ or $\mathrm{ord}(z)= \infty$. If $xy = z$, $ \langle x \rangle = \langle x^r\rangle$, $ \langle y \rangle = \langle y^r \rangle$, $ \langle z \rangle = \langle z^r \rangle$, then there exist $x'$, $y'$, $z' \in G$ such that $x'y' = z'$, $ x' \sim x^r$, $ y' \sim y^r$, $ z' \sim z^r$.
\end{propenum}  
\end{proposition}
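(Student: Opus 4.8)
The plan is to notice that, under these hypotheses, the exponent $r$ is forced to lie in $\{1,-1\}$, after which explicit formulas — essentially the ones advertised in the introduction — do the job.

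First I would dispose of the exponent. If $\mathrm{ord}(x)=\infty$, then the map $n \mapsto x^n$ from $\ZZ$ to $\langle x\rangle$ is injective; since $x \in \langle x\rangle = \langle x^r\rangle$ we may write $x = x^{rn}$ for some $n \in \ZZ$, so $rn = 1$ and hence $r \in \{1,-1\}$. The same argument applied to $z$ (using $\langle z\rangle = \langle z^r\rangle$) shows that $\mathrm{ord}(z)=\infty$ likewise forces $r \in \{1,-1\}$, so in the situation of part (a) we always have $r = \pm 1$. In part (b) the identical reasoning, applied to whichever of $x,y,z$ has infinite order together with its equality $\langle \cdot\rangle = \langle \cdot^r\rangle$, again gives $r \in \{1,-1\}$.

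It then remains to exhibit witnesses in the two cases $r = 1$ and $r = -1$. For part (a): if $r = 1$, take $w = u$, so that $[x^r,w] = [x,u] = z = z^r$. If $r = -1$, take $w = xu$ and compute, using the convention $[x,u] = xux^{-1}u^{-1}$, that $[x^{-1},xu] = x^{-1}(xu)x(xu)^{-1} = uxu^{-1}x^{-1} = [x,u]^{-1} = z^{-1} = z^r$. For part (b): if $r = 1$, take $(x',y',z') = (x,y,z)$. If $r = -1$, take $x' = x^{-1}$, $y' = {}^{x} y^{-1} = xy^{-1}x^{-1}$ and $z' = z^{-1}$; then $x' \sim x^r$, $y' \sim y^r$ and $z' \sim z^r$, while $x'y' = x^{-1}(xy^{-1}x^{-1}) = y^{-1}x^{-1} = (xy)^{-1} = z^{-1} = z'$.

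I do not expect a genuine obstacle here: the only real content is the observation that the infinite-order hypothesis collapses the otherwise rather weak conditions $\langle x\rangle = \langle x^r\rangle$, $\langle y\rangle=\langle y^r\rangle$, $\langle z\rangle = \langle z^r\rangle$ down to $r = \pm 1$, and the only point needing a moment's care is the commutator bookkeeping $[x^{-1},xu] = [x,u]^{-1}$ in the $r=-1$ case of part (a), which is immediate from the fixed convention.
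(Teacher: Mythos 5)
Your proposal is correct and follows essentially the same route as the paper: the infinite-order hypothesis forces $r \in \{\pm 1\}$, after which the explicit witnesses (the same formulas $w = xu$, $x' = x^{-1}$, $y' = xy^{-1}x^{-1}$, $z' = z^{-1}$ mentioned in the introduction) settle both parts. You merely spell out the verifications that the paper leaves implicit.
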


\begin{proof}
The proofs are analogous, so we only prove (a).
Suppose $[x,u] = z$, $\langle x \rangle = \langle x^r \rangle$, $\langle z \rangle = \langle z^r \rangle$. If $\mathrm{ord}(x) = \infty$, then $r \in \{\pm 1\}$ since $\langle x \rangle = \langle x^r \rangle$. Analogously we have $r \in \{\pm1\}$ if $\mathrm{ord}(z) = \infty$. As shown in the introduction, if $r \in \{\pm 1\}$, then we can give an explicit formula for $w$ as above.
\end{proof}

\begin{lemma}
\label{Lemma: 5}
Let $r$ and $n$ be coprime integers, and let $S$ be a finite set of prime numbers. Then there exists a positive integer $r'$ with $r' \equiv r \pmod{n}$, that is not divisible by any element of $S$.
\end{lemma}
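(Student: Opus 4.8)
The plan is to reduce the statement to the Chinese Remainder Theorem; Dirichlet's theorem is not needed. First I would split the finite set $S$ according to divisibility by $n$: write $S = S_1 \cup S_2$ with $S_1 = \{p \in S : p \mid n\}$ and $S_2 = \{p \in S : p \nmid n\}$. The two families will be dealt with for completely different reasons, which is why the split is convenient.

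For the primes in $S_1$ nothing has to be arranged. If $p \mid n$ and $r' \equiv r \pmod{n}$, then in particular $r' \equiv r \pmod{p}$; and since $\gcd(r,n) = 1$ forces $p \nmid r$, we get $p \nmid r'$ automatically. Thus every integer in the residue class of $r$ modulo $n$ already avoids divisibility by all primes of $S_1$.

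For the primes in $S_2$ I would use that $n$ together with the (distinct) primes of $S_2$ are pairwise coprime. By the Chinese Remainder Theorem there is then an integer $r'$ with $r' \equiv r \pmod{n}$ and $r' \equiv 1 \pmod{p}$ for every $p \in S_2$; since $p \geq 2$, the latter congruence gives $p \nmid r'$. The set of solutions of these congruences is a full residue class modulo $N := n \prod_{p \in S_2} p$, so after adding a suitable multiple of $N$ we may assume $r'$ is a positive integer, and by the previous paragraph it satisfies all requirements.

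I do not anticipate a real obstacle here; the only point requiring care is the observation that a prime dividing $n$ cannot divide $r$, which is precisely where the hypothesis $\gcd(r,n) = 1$ enters and which is what reduces the problem to dodging the finitely many primes of $S_2$ by the Chinese Remainder Theorem. (One could instead invoke Dirichlet's theorem on primes in arithmetic progressions to take $r'$ prime, but that is far more than needed.)
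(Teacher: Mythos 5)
Your proof is correct, but it takes a different route from the paper's. You split $S$ according to whether $p \mid n$ (those primes are avoided automatically, since $\gcd(r,n)=1$) and handle the remaining primes by imposing $r' \equiv 1 \pmod{p}$ via the Chinese Remainder Theorem. The paper instead splits $S$ according to whether $p \mid r$: it sets $T = \{p \in S : p \nmid r\}$, $q = \prod_{p \in T} p$, and simply takes $r' = r + nq$. Then a prime $p \in S$ dividing $r$ divides neither $n$ (by coprimality) nor $q$, so it cannot divide $r'$, while a prime $p \in S$ not dividing $r$ divides $nq$ but not $r$, so again $p \nmid r'$. Both arguments are elementary and short; the paper's buys an explicit one-line formula for $r'$ with no appeal to CRT, whereas yours invokes a standard theorem but makes the two mechanisms (automatic avoidance versus arranged avoidance) more transparent, at the cost of a slightly longer setup. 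One small point common to both: positivity of $r'$ needs a word (you address it by adding a multiple of $N$, which is fine whenever $n \neq 0$, the only case that matters in the paper's applications).
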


\begin{proof}
Define $T:= \{p \in S: p \nmid r\}$ and $q:= \prod_{p \in T}p$. Then $r'= r+nq$ will do.
\end{proof}

\begin{proposition}
\label{Proposition: torsion}
If a group is quasi-Honda and torsion, then it is Honda.
\end{proposition}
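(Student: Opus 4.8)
The plan is to start with arbitrary $x,u,z$ in a torsion quasi-Honda group $G$ and an integer $r$ with $[x,u]=z$ and $\langle z\rangle = \langle z^r\rangle$, and to produce $v,w\in G$ with $[v,w]=z^r$. The only gap between the quasi-Honda hypothesis and what we want is that quasi-Honda additionally requires $\langle x\rangle = \langle x^r\rangle$, which the given $r$ need not satisfy; so the first move is to replace $r$ by a better exponent.

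Since $G$ is torsion, set $k:=\mathrm{ord}(x)$ and $n:=\mathrm{ord}(z)$, both finite. As $z$ has finite order, the hypothesis $\langle z\rangle = \langle z^r\rangle$ is equivalent to $\gcd(r,n)=1$. I would then apply Lemma \ref{Lemma: 5} to this $r$, this $n$, and the finite set $S$ of primes dividing $k$, obtaining a positive integer $r'$ with $r'\equiv r\pmod n$ and $r'$ divisible by no prime of $S$, i.e. $\gcd(r',k)=1$. Consequently $z^{r'}=z^r$, so $\langle z\rangle = \langle z^{r'}\rangle$, and $\gcd(r',k)=1$ gives $\langle x\rangle = \langle x^{r'}\rangle$. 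Thus the quasi-Honda property applies to $x,u,z$ with the exponent $r'$.

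This yields $w_0\in G$ with $[x^{r'},w_0]\sim z^{r'}=z^r$, say $[x^{r'},w_0]= {}^g z^r$ for some $g\in G$. Conjugating by $g^{-1}$ and using that conjugation is an automorphism, so ${}^{g^{-1}}[a,b]=[\,{}^{g^{-1}}a,\,{}^{g^{-1}}b\,]$, we get $[\,{}^{g^{-1}}x^{r'},\,{}^{g^{-1}}w_0\,]=z^r$. Hence $z^r$ is a commutator in $G$ with $v:={}^{g^{-1}}x^{r'}$ and $w:={}^{g^{-1}}w_0$; since $x,u,z$ and $r$ were arbitrary, $G$ is Honda. I do not expect a genuine obstacle here: the one point that needs to be spotted is that one must first adjust $r$ to an exponent coprime to $\mathrm{ord}(x)$ before invoking quasi-Honda, and Lemma \ref{Lemma: 5} is tailored for exactly this; absorbing the conjugacy ``$\sim$'' into $v$ and $w$ afterwards is routine.
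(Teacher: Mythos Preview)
Your proof is correct and follows essentially the same route as the paper: adjust $r$ to an $r'$ via Lemma~\ref{Lemma: 5} so that $r'\equiv r \pmod{\mathrm{ord}(z)}$ and $\gcd(r',\mathrm{ord}(x))=1$, then invoke the quasi-Honda property with exponent $r'$. The only cosmetic differences are that the paper takes $S$ to be the primes dividing $\mathrm{ord}(x)\mathrm{ord}(z)$ rather than just $\mathrm{ord}(x)$, and that you spell out the conjugation step turning $[x^{r'},w_0]\sim z^r$ into an actual commutator equal to $z^r$, which the paper leaves implicit.
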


\begin{proof}
Let $G$ be a group that is quasi-Honda and torsion. Let $r$ be an integer, $x,u,z \in G$ with $[x,u]=z, \, \langle z \rangle = \langle z^r \rangle$. Let $S$ be the set of prime numbers that divide $\mathrm{ord}(x) \mathrm{ord}(z)$. We have $\mathrm{gcd}(r,\mathrm{ord}(z)) = 1$, so by Lemma \ref{Lemma: 5} there exists a positive integer $r'$ with $r' \equiv r \pmod{\mathrm{ord}(z)}$, that is coprime to $\mathrm{ord}(x) \mathrm{ord}(z)$. Choose such an $r'$. Then $\langle z \rangle = \langle z^{r'} \rangle$ and $\langle x \rangle = \langle x^{r'} \rangle$, so there exists $w \in G$ such that $[x^{r'},w] \sim z^{r'} = z^r$. Thus $z^r$ is a commutator.
\end{proof}

\begin{proposition}
\begin{propenum}
\item \label{Proposition: klmr quasi-Burnside implies kmr quasi-Burnside}
Let $r \in (\mathbb{Z}/\lcm(k,m)\mathbb{Z})^*$. If a group is $(k,k,m,r)$-quasi-Burnside, then it is $(k,m,r)$-quasi-Honda.
\item \label{Proposition: quasi-Burnside implies quasi-Honda}
If a group is quasi-Burnside, then it is quasi-Honda.
\end{propenum}
\end{proposition}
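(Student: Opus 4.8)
The plan is to handle (a) by turning the commutator relation $[x,u]=z$ into a product relation, applying the $(k,k,m,r)$-quasi-Burnside hypothesis to it, and then reading off a suitable $w$; part (b) will then follow formally from (a) together with Proposition \ref{Proposition: x or z infinite order quasi-Honda}.

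For (a), let $G$ be $(k,k,m,r)$-quasi-Burnside and take $x,u,z\in G$ with $[x,u]=z$ and $x^k=z^m=1$. The observation I would use is that $[x,u]=x\cdot ux^{-1}u^{-1}$, so putting $y:={}^u x^{-1}$ we obtain $xy=z$ with $y^k=1$ (since $y\sim x^{-1}$) and $z^m=1$; as $\lcm(k,k,m)=\lcm(k,m)$, the hypothesis applies to this triple and produces $x',y',z'\in G$ with $x'y'=z'$ and $x'\sim x^r$, $y'\sim y^r$, $z'\sim z^r$. Writing $x'={}^a x^r$ and $y'={}^b y^r$ for some $a,b\in G$, I would conjugate the equation $x'y'=z'$ by $a^{-1}$ to obtain $x^r\cdot{}^{a^{-1}}y'={}^{a^{-1}}z'$. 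Here ${}^{a^{-1}}y'={}^{a^{-1}b}y^r={}^{a^{-1}b}\bigl({}^u x^{-r}\bigr)={}^w x^{-r}$ with $w:=a^{-1}bu$, so the left-hand side equals $x^r w x^{-r}w^{-1}=[x^r,w]$, while the right-hand side ${}^{a^{-1}}z'$ is conjugate to $z^r$. Hence $[x^r,w]\sim z^r$, which is exactly the required conclusion; in fact $w=a^{-1}bu$ is an explicit formula of the kind discussed in the introduction.

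For (b), let $G$ be quasi-Burnside, let $r$ be an integer, and take $x,u,z\in G$ with $[x,u]=z$, $\langle x\rangle=\langle x^r\rangle$, $\langle z\rangle=\langle z^r\rangle$. If $\mathrm{ord}(x)=\infty$ or $\mathrm{ord}(z)=\infty$, then Proposition \ref{Proposition: x or z infinite order quasi-Honda} already provides a $w$ with $[x^r,w]=z^r$, so I may assume both orders are finite; and if $\mathrm{ord}(x)=1$ then $x=1$, whence $z=[x,u]=1$ and the conclusion is trivial (similarly if $\mathrm{ord}(z)=1$), so I may take $k_0:=\mathrm{ord}(x)\ge 2$ and $m_0:=\mathrm{ord}(z)\ge 2$. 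The conditions $\langle x\rangle=\langle x^r\rangle$ and $\langle z\rangle=\langle z^r\rangle$ say precisely that $r$ is coprime to $k_0$ and to $m_0$, so the class of $r$ lies in $(\ZZ/\lcm(k_0,m_0)\ZZ)^{*}=(\ZZ/\lcm(k_0,k_0,m_0)\ZZ)^{*}$. Since $G$ is quasi-Burnside it is $(k_0,k_0,m_0,r)$-quasi-Burnside, so by part (a) it is $(k_0,m_0,r)$-quasi-Honda; applying this to the triple $x,u,z$, which satisfies $x^{k_0}=z^{m_0}=1$, yields the desired $w$.

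I do not expect a genuine obstacle here; the only point that needs care is the bookkeeping in (a) that converts the quasi-Burnside output -- an honest equation $x'y'=z'$ whose terms are merely conjugates of $x^r,y^r,z^r$ -- into the quasi-Honda output, where the first entry of the commutator must be exactly $x^r$ and only the value $z^r$ may be replaced by a conjugate. This is handled by conjugating the whole equation so that its first factor becomes $x^r$ on the nose, after which the product of $x^r$ with any conjugate of $x^{-r}$ is automatically of the form $[x^r,w]$.
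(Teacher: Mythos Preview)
Your proof is correct and follows essentially the same approach as the paper: for (a) you set $y={}^u x^{-1}$ and unwind the quasi-Burnside output into a commutator exactly as the paper does (the paper phrases this as a chain of equivalences rather than a direct computation, but the content is identical), and for (b) you spell out in detail the reduction to (a) via the choice $k_0=\mathrm{ord}(x)$, $m_0=\mathrm{ord}(z)$, which the paper compresses into the one-line observation that quasi-Burnside (resp.\ quasi-Honda) is equivalent to $(k,k,m,r)$-quasi-Burnside (resp.\ $(k,m,r)$-quasi-Honda) for all $k,m,r$.
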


\begin{proof}
It suffices to only prove (a), since a group is quasi-Burnside respectively quasi-Honda if and only if it is $(k,k,m,r)$-quasi-Burnside respectively $(k,m,r)$-quasi-Honda for all $k,m,r$. 
Let $G$ be a group. Given $x,z \in G$, we have 
\begin{align*}
    \exists w \in G: [x^r,w] \sim z^r & \Leftrightarrow \exists w \in G: x^r \cdot {}^wx^{-r} \sim z^r \\
    & \Leftrightarrow \exists a,b,w \in G: {}^ax^r \cdot {}^{aw}x^{-r} = {}^bz^r \\
    & \Leftrightarrow \exists x',y',z' \in G: x'y' = z',\, x' \sim x^r,\, y' \sim x^{-r},\, z' \sim z^r.
\end{align*}
Thus $G$ is $(k,m,r)$-quasi-Honda if and only if for all $x,u,z \in G$ we have that if $[x,u] = z$, $x^k=z^m=1$, then there exist $x',y',z' \in G$ such that $x'y'=z'$, $x' \sim x^r$, $y' \sim x^{-r}$, $z' \sim z^{-r}$. This is the definition of a $(k,k,m,r)$-quasi-Burnside group restricted to the case where $y = {}^ux^{-1}$ for some $u \in G$.
\end{proof}

\begin{proposition}
\label{Proposition: Finite groups qb, qh, h}
Every finite group is quasi-Burnside, quasi-Honda and Honda.
\end{proposition}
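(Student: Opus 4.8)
The plan is to reduce everything to the quasi-Burnside property and then run Burnside's classical character-sum argument. By Proposition~\ref{Proposition: quasi-Burnside implies quasi-Honda} a quasi-Burnside group is quasi-Honda, and a finite group is torsion, so Proposition~\ref{Proposition: torsion} then yields the Honda property; hence it suffices to prove that an arbitrary finite group $G$ is quasi-Burnside. So suppose $x,y,z \in G$ satisfy $xy = z$, and let $r$ be an integer with $\langle x\rangle = \langle x^r\rangle$, $\langle y\rangle = \langle y^r\rangle$, $\langle z\rangle = \langle z^r\rangle$; we must exhibit $x' \sim x^r$, $y' \sim y^r$, $z' \sim z^r$ with $x'y' = z'$.

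The first step is to arrange $\gcd(r,|G|)=1$. By hypothesis $r$ is coprime to $n := \lcm(\mathrm{ord}(x),\mathrm{ord}(y),\mathrm{ord}(z))$, so Lemma~\ref{Lemma: 5}, applied with $S$ the set of primes dividing $|G|$, produces a positive integer $r'$ with $r' \equiv r \pmod n$ and $\gcd(r',|G|)=1$. Then $x^{r'}=x^r$, $y^{r'}=y^r$, $z^{r'}=z^r$, and since $r'$ is coprime to every element order in $G$ we get $\langle g\rangle = \langle g^{r'}\rangle$, hence $C_G(g) = C_G(g^{r'})$, for all $g \in G$; in particular the conjugacy classes of $g$ and $g^{r'}$ have equal size. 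Replacing $r$ by $r'$, we may assume $\gcd(r,|G|)=1$.

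The second step is the character computation. For conjugacy classes $C,D,E$ of $G$ and a fixed $e \in E$, the non-negative integer $N(C,D,E) := \#\{(c,d)\in C\times D : cd = e\}$ is given by the class-algebra formula
\[ N(C,D,E) = \frac{|C|\,|D|}{|G|}\sum_{\chi}\frac{\chi(c_0)\chi(d_0)\overline{\chi(e)}}{\chi(1)}, \]
the sum running over the irreducible complex characters of $G$, with $c_0 \in C$, $d_0 \in D$. Since $xy = z$, the quantity $\Sigma := \sum_\chi \chi(x)\chi(y)\overline{\chi(z)}/\chi(1)$ equals $\frac{|G|}{|C_x|\,|C_y|}N(C_x,C_y,C_z)$, a positive rational number. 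Let $\sigma$ be the automorphism $\zeta\mapsto\zeta^r$ of $\QQ(\zeta_{|G|})$, which exists because $\gcd(r,|G|)=1$. Since the eigenvalues of any $g \in G$ in a representation are $|G|$-th roots of unity, one has $\chi(g^r)=\sigma(\chi(g))$ for all $\chi$ and $g$; moreover $\chi(1)\in\ZZ$ is $\sigma$-fixed, and complex conjugation commutes with $\sigma$ because $\mathrm{Gal}(\QQ(\zeta_{|G|})/\QQ)$ is abelian. Applying the formula to $C_{x^r},C_{y^r},C_{z^r}$ with $e = z^r$ and transforming each term by $\sigma$ gives $\sum_\chi \chi(x^r)\chi(y^r)\overline{\chi(z^r)}/\chi(1) = \sigma(\Sigma) = \Sigma$ (as $\Sigma \in \QQ$), whence, using $|C_{x^r}|=|C_x|$ and $|C_{y^r}|=|C_y|$ from the first step,
\[ N(C_{x^r},C_{y^r},C_{z^r}) = \frac{|C_{x^r}|\,|C_{y^r}|}{|G|}\,\Sigma = N(C_x,C_y,C_z) \geq 1. \]
Hence there are $a \in C_{x^r}$, $b \in C_{y^r}$ with $ab = z^r$, and $x' := a$, $y' := b$, $z' := z^r$ are as required.

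The routine parts are the reduction through the earlier propositions and the adjustment of $r$ modulo the orders of $x,y,z$. The step that needs care is the second one, specifically the compatibility of the cyclotomic automorphism $\sigma$ with character values — the identities $\chi(g^r)=\sigma(\chi(g))$ and $\overline{\sigma(w)}=\sigma(\overline w)$ — together with the key point that although the individual summands of $\Sigma$ live in a cyclotomic field, the sum $\Sigma$ itself is rational, so $\sigma$ fixes it. This is exactly what forces the structure constant of the twisted classes to equal that of the original triple, hence to be nonzero.
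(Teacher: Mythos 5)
Your proof is correct and follows essentially the same route as the paper: reduce to the quasi-Burnside case via Propositions \ref{Proposition: quasi-Burnside implies quasi-Honda} and \ref{Proposition: torsion}, adjust $r$ to an $r'$ coprime to the group order using Lemma \ref{Lemma: 5}, and conclude from the invariance of the class-multiplication count under $g \mapsto g^{r'}$. The only difference is that the paper simply cites this counting statement (Theorem \rom{7}, Chapter \rom{15} of \cite{Burnside}), whereas you re-derive it from the class-algebra character formula together with the Galois automorphism $\zeta \mapsto \zeta^{r'}$ and the rationality of the structure constant, which is in substance Burnside's original argument and is carried out correctly.
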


\begin{proof}
Given a conjugacy class $C$ of a group, and an integer $s$, we write $C^s$ for the conjugacy class $\{c^s:c \in C\}$. We write $\#$ for the cardinality of a set. Theorem \rom{7} from Chapter \rom{15} of \cite{Burnside} by Burnside states the following. Let $G$ be a finite group and let $s$ be an integer that is coprime to the exponent of $G$. Let $C,D,E$ be conjugacy classes of $G$. Then for all $z \in E$ and all $z' \in E^s$ one has
$$\#\{(x,y) \in C \times D: xy = z\} = \#\{(x',y') \in C^s \times D^s: x'y' = z'\}.$$
Now let $G$ be a finite group, $r$ an integer. Let $x,y,z \in G$ such that $xy=z$, $\langle x \rangle = \langle x^r \rangle$, $\langle y \rangle = \langle y^r \rangle$, $\langle z \rangle = \langle z^r \rangle$ and let $n:= \mathrm{ord}(x)\mathrm{ord}(u)\mathrm{ord}(z)$. Notice that $\mathrm{gcd}(r,n) = 1$. Let $S$ be the set of prime numbers that divide the exponent of $G$. By Lemma \ref{Lemma: 5} there exists a positive integer $r'$ that is coprime to the exponent of $G$ and that satisfies $r' \equiv r \pmod{n}$. Choose such $r'$. Applying Burnside's Theorem \rom{7} with $C,D,E$ equal to the conjugacy classes of respectively $x,y,z$, and with $s=r'$, we find that there exist $x',y',z' \in G$ such that $x'y' = z'$, $x'\sim x^{r'} = x^r$, $y'\sim y^{r'} = y^r$, $z'\sim z^{r'} = z^r$. Thus every group is quasi-Burnside. By Proposition \ref{Proposition: quasi-Burnside implies quasi-Honda} every finite group is quasi-Honda, and then by Proposition \ref{Proposition: torsion} every finite group is Honda.
\end{proof}

Given two groups $G,H$, we denote by $G \conv H$ the free product of $G$ and $H$. Result (C) on page 488 of \cite{Pride} states that given an integer $n \geq 2$, the group $P_n:= \langle a,b,c \mid [a,b]=c, \, c^n = 1\rangle$ is not Honda. It turns out that $P_n$ is quasi-Burnside and quasi-Honda.

\begin{proposition}
\label{Proposition: Pride groups quasi-Burnside}
    Let $n \geq 2$ be an integer. Then $P_n$ is quasi-Burnside and quasi-Honda.
\end{proposition}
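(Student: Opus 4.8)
The plan is to reduce the whole statement to a triviality about powers inside a single cyclic subgroup. Recall that a group is quasi-Burnside if and only if it is $(k,l,m,r)$-quasi-Burnside for all $k,l,m \geq 2$ and all $r \in (\ZZ/\lcm(k,l,m)\ZZ)^*$, and that every quasi-Burnside group is quasi-Honda by Proposition~\ref{Proposition: quasi-Burnside implies quasi-Honda}; so it suffices to fix arbitrary $k,l,m \geq 2$ and $r \in (\ZZ/\lcm(k,l,m)\ZZ)^*$ and show $P_n$ is $(k,l,m,r)$-quasi-Burnside. Let $x,y,z \in P_n$ satisfy $xy = z$ and $x^k = y^l = z^m = 1$. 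By Proposition~\ref{Proposition: x or y or z infinite order quasi-Brunside} we may assume $x$, $y$ and $z$ all have finite order. Writing $c := [a,b]$, the heart of the argument will be the claim that there is some $g \in P_n$ with $x,y,z \in g\langle c\rangle g^{-1}$. Granting this, write $x = gc^ig^{-1}$ and $y = gc^jg^{-1}$ for integers $i,j$; then automatically $z = gc^{i+j}g^{-1}$, and $x' := x^r = gc^{ir}g^{-1}$, $y' := y^r = gc^{jr}g^{-1}$, $z' := z^r = gc^{(i+j)r}g^{-1}$ satisfy $x'y' = gc^{(i+j)r}g^{-1} = z'$ with $x' \sim x^r$, $y' \sim y^r$, $z' \sim z^r$.

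To prove the claim I would put $P_n$ into a convenient normal form. In $P_n$ one has $[a,b] = a(bab^{-1})^{-1}$, so with $a_0 := a$ and $a_1 := bab^{-1}$ we get $c = a_0a_1^{-1}$ and $[a,b]^n = (a_0a_1^{-1})^n$. Adjoining the redundant generator $a_1$ (with defining relation $a_1 = ba_0b^{-1}$) gives the presentation $P_n \cong \langle a_0, a_1, b \mid (a_0a_1^{-1})^n = 1,\ ba_0b^{-1} = a_1\rangle$, which displays $P_n$ as the HNN extension of the base group $L := \langle a_0,a_1 \mid (a_0a_1^{-1})^n\rangle$ with stable letter $b$ and associated subgroups $\langle a_0\rangle$ and $\langle a_1\rangle$; this is exactly the Magnus rewriting of the one-relator group $P_n$ along the generator $b$, whose exponent sum in the relator $[a,b]^n$ is zero. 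Substituting $u := a_0a_1^{-1}$ and $v := a_1$ identifies $L$ with $\langle u \mid u^n\rangle \conv \langle v\rangle \cong (\ZZ/n\ZZ)\conv\ZZ$; note that $u$ maps to $c$, that $\langle u\rangle$ has order $n$, and that $\langle a_0\rangle = \langle uv\rangle$ and $\langle a_1\rangle = \langle v\rangle$ are infinite cyclic, so the HNN extension above is legitimate and $L$ embeds in $P_n$.

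Now let $T$ be the Bass--Serre tree of this HNN extension; its vertex stabilizers are the conjugates of $L$ and its edge stabilizers are infinite cyclic. A finite group acting on a tree fixes a vertex, so each of $x,y,z$ fixes a vertex of $T$. I would then invoke the standard fact that if two vertex-fixing isometries of a tree have disjoint fixed-point sets, then their product is hyperbolic and so has infinite order; since $z = xy$ has finite order, $x$ and $y$ must fix a common vertex of $T$, hence $x,y,z$ all lie in the stabilizer of a single vertex, i.e.\ in a conjugate of $L$. Conjugating, assume $x,y,z \in L$, and run the same argument again, now on the Bass--Serre tree of the free product $L \cong (\ZZ/n\ZZ)\conv\ZZ$: its edge stabilizers are trivial and every finite subgroup is conjugate into the factor $\langle u\rangle$, so again $x$ and $y$ fix a common vertex, whose stabilizer must be a conjugate of $\langle u\rangle$ in $L$ (it contains the torsion element $x$). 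Undoing all conjugations, $x,y,z$ lie in a common conjugate of $\langle u\rangle = \langle c\rangle$ in $P_n$, which is the claim. Finally, $P_n$ being quasi-Burnside, Proposition~\ref{Proposition: quasi-Burnside implies quasi-Honda} gives that $P_n$ is quasi-Honda as well.

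The part needing the most care is the second paragraph: checking that $P_n$ really is the HNN extension of $(\ZZ/n\ZZ)\conv\ZZ$ described above — a concrete instance of standard one-relator theory — and keeping track of the fact that the cyclic factor $\langle u\rangle$ of the base is precisely $\langle c\rangle$. The two applications of the ``bridge'' argument for trees are then routine. As an alternative to the tree language, one could instead compute (by Reidemeister--Schreier) that the commutator subgroup $[P_n,P_n]$, which contains every torsion element of $P_n$, is a free product of copies of $\ZZ/n\ZZ$ whose factors are exactly the conjugates of $\langle c\rangle$; the claim then reduces to the description of torsion, and of products of torsion elements, in a free product.
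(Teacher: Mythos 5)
Your proof is correct, but it takes a genuinely different route from the paper's. The paper splits $P_n$ along $a$: it identifies $P_n$ with $A_n \rtimes \langle a \rangle$, where $A_n$ is the free product of $\langle b\rangle$ and infinitely many copies of $\ZZ/n\ZZ$, observes that torsion lies in $A_n$ and (by Serre's conjugacy theorem for free products) that each of $x,y,z$ is \emph{separately} conjugate to a power of some $c_i$, and then matches the three exponents modulo $n$ by mapping $A_n$ to an abelian group; the witnesses $x',y',z'$ are then powers of $c$ itself. You instead split $P_n$ along $b$, as an HNN extension with base $L \cong (\ZZ/n\ZZ)\conv\ZZ$ (your presentation and the identification $u=c$ check out), and apply the tree ``bridge'' lemma twice --- elliptic elements whose product is elliptic share a fixed vertex --- to get the strictly stronger conclusion that $x,y,z$ lie in a \emph{single} conjugate of $\langle c\rangle$, after which the quasi-Burnside condition is immediate with $x'=x^r$, $y'=y^r$, $z'=z^r$ and no abelianization step. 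What each approach buys: yours yields a sharper structural fact (one conjugator $g$ works, and the primed elements are literally the $r$-th powers), at the price of invoking the HNN rewriting and Bass--Serre theory, whereas the paper's argument is more elementary, needing only the conjugacy theorem for torsion in free products plus a homomorphism to an abelian group; note also that your common-fixed-vertex argument could be run just as well inside the paper's own semidirect-product model, making the HNN step avoidable. Two small points of hygiene: the appeal to Proposition~\ref{Proposition: x or y or z infinite order quasi-Brunside} belongs to the reduction from quasi-Burnside to the $(k,l,m,r)$-versions (once $x^k=y^l=z^m=1$ is assumed, finiteness of the orders is automatic), and when you conclude that the common vertex stabilizer in the tree for $L$ is a conjugate of $\langle u\rangle$ ``since it contains the torsion element $x$,'' you should dispose of the degenerate cases where some of $x,y,z$ are trivial (all of which are immediate). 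Finally, the closing step --- quasi-Honda via Proposition~\ref{Proposition: quasi-Burnside implies quasi-Honda} --- is exactly what the paper does.
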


\begin{proof}
Let $x,y,z \in P_n$ and let $r$ be an integer such that $xy = z$, $\langle x \rangle = \langle x^r\rangle$, $\langle y \rangle = \langle y^r\rangle$, $\langle z \rangle = \langle z^r \rangle$. We will show that there exist $x', y', z' \in P_n$ with $x'y' = z'$, $x' \sim x^r$, $y' \sim y^r$, $z' \sim z^r$. 
By Proposition \ref{Proposition: x or y or z infinite order quasi-Brunside} we may assume that $x,y,z$ have finite order. 
Let $\langle a \rangle$, $\langle b \rangle$ be infinite cyclic groups, let $A_n = \langle c_i \, (i \in \mathbb{Z})\mid c_i^n = 1 \, (i \in \mathbb{Z}) \rangle \conv \langle b \rangle$, and define $A_n \rtimes \langle a \rangle$ by $ac_ia^{-1} = c_{i+1}$ for all $i$ and $ aba^{-1} = c_0b$.
One readily shows that there is an isomorphism $A_n \rtimes \langle a \rangle \xrightarrow{\sim} P_n: a \mapsto a$, $b \mapsto b$, $c_i \mapsto a^ica^{-i}$.
Any element of finite order in $A_n \rtimes \langle a \rangle$ belongs to the kernel of the canonical quotient map $A_n \rtimes \langle a \rangle \to (A_n \rtimes \langle a \rangle)/A_n \cong \mathbb{Z}$, so to $A_n$. 
Note that $A_n$ is the free product of $\langle b \rangle$ and a countably infinite number of cyclic groups of order $n$, generated by the $c_i$'s.
Corollary 1 of Proposition 2 in Section \rom{1}.1.3 of \cite{Serre} states that every element of finite order in a free product is conjugate to an element of one of the groups in the free product. 
Thus any element of finite order in $A_n \rtimes \langle a \rangle$ is conjugate in $A_n$ to a power of $c_i$ for some $i$. 
View $x,y,z$ as elements of $A_n \rtimes \langle a \rangle$ and let $s,t,u$ be integers such that $x,y,z$ are conjugate in $A_n$ to respectively $c_h^s,c_i^t,c_j^u$ for some integers $h,i,j$.
Let $\langle \beta \rangle, \langle \gamma \rangle$ be cyclic groups of respective orders $\infty, n$, and let $\sigma: A_n \to \langle \beta \rangle \times \langle \gamma \rangle$ be the homomorphism that sends $b$ to $\beta$ and sends $c_i$ to $\gamma$ for all $i$. Since $\langle \beta \rangle \times \langle \gamma \rangle$ is abelian, we have $\gamma^u = \sigma(z) = \sigma(x)\sigma(y) = \gamma^{s+t}$. Thus $u \equiv s+t \pmod{n}$, so $c^{sr} \cdot c^{tr} = c^{(s+t)r} = c^{ur}$. In $P_n$ we have $c^{sr} \sim x^r$, $c^{tr} \sim y^r$, $c^{ur} \sim z^r$, so $P_n$ is quasi-Burnside.

Now Proposition \ref{Proposition: quasi-Burnside implies quasi-Honda} gives that $P_n$ is quasi-Honda.
\end{proof}

\begin{proposition}
\label{Proposition: r st G is klmr qB qH is subgroup}
Let $G$ be a group. Then $\{r \in (\ZZ/\lcm(k,m)\ZZ)^* \mid G \text{ is } (k,m,r) \text{-quasi-Honda}\}$ and $\{r \in (\ZZ/\lcm(k,l,m)\ZZ)^* \mid G \text{ is } (k,l,m,r) \text{-quasi-Burnside}\}$ are subgroups of $(\ZZ/\lcm(k,m)\ZZ)^*$ respectively $(\ZZ/\lcm(k,l,m)\ZZ)^*$ containing $-1$. \hfill \qedsymbol
\end{proposition}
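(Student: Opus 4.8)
The plan is to prove two facts: that each of the two sets contains $-1$, and that each is closed under multiplication. Since $(\ZZ/\lcm(k,m)\ZZ)^*$ and $(\ZZ/\lcm(k,l,m)\ZZ)^*$ are finite, a nonempty subset of either that is closed under the group operation is automatically a subgroup; as the two facts in particular make the sets nonempty (and $1$ lies in them anyway, taking $w=u$ resp. $(x',y',z')=(x,y,z)$), they prove the proposition. One uses implicitly that $(k,m,r)$-quasi-Honda and $(k,l,m,r)$-quasi-Burnside depend only on $r$ modulo $\lcm(k,m)$ resp. $\lcm(k,l,m)$, so that a product such as $rs$ is meaningful here.

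To see that $-1$ lies in each set I would use the explicit formulas recorded in the introduction. If $[x,u]=z$, then $[x^{-1},xu] = uxu^{-1}x^{-1} = z^{-1}$, so $-1$ lies in the quasi-Honda set. If $xy=z$, then $x^{-1}\cdot {}^x(y^{-1}) = x^{-1}\cdot xy^{-1}x^{-1} = y^{-1}x^{-1} = z^{-1}$, and $x^{-1}\sim x^{-1}$, ${}^x(y^{-1})\sim y^{-1}$, $z^{-1}\sim z^{-1}$, so $-1$ lies in the quasi-Burnside set (the remaining conditions in the definitions being trivial to check).

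For closure under multiplication the idea is simply to apply the two relevant instances of the property in succession; the mechanism that makes this work is that the hypotheses that the orders divide $k,l,m$ survive both conjugation and raising to an $r$-th power (if $x^k=1$, any conjugate of $x^r$ has its $k$-th power equal to $1$, since $x^{rk}=(x^k)^r=1$). Concretely, suppose $G$ is $(k,l,m,r)$- and $(k,l,m,s)$-quasi-Burnside and $xy=z$ with $x^k=y^l=z^m=1$. The $r$-instance yields $x_1,y_1,z_1$ with $x_1y_1=z_1$ and $x_1\sim x^r$, $y_1\sim y^r$, $z_1\sim z^r$; this triple again satisfies the hypotheses, so the $s$-instance yields $x_2,y_2,z_2$ with $x_2y_2=z_2$ and $x_2\sim x_1^s\sim x^{rs}$, $y_2\sim y^{rs}$, $z_2\sim z^{rs}$ (using that conjugation commutes with raising to the $s$-th power). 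Hence $G$ is $(k,l,m,rs)$-quasi-Burnside.

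The quasi-Honda case runs the same way, with one extra conjugation inserted because the $r$-instance only produces a conjugacy $[x^r,w_1]\sim z^r$, not an equality. Given $[x,u]=z$ with $x^k=z^m=1$, choose $w_1$ and $g$ with $g^{-1}[x^r,w_1]g=z^r$, and set $\tilde x={}^{g^{-1}}x^r$, $\tilde u={}^{g^{-1}}w_1$; then $[\tilde x,\tilde u]=z^r$ with $\tilde x^k=1$ and $(z^r)^m=1$, so the $s$-instance applied to $(\tilde x,\tilde u,z^r)$ gives $w_2$ and $h$ with $[\tilde x^s,w_2]={}^h(z^{rs})$. Since $\tilde x^s={}^{g^{-1}}x^{rs}$, conjugating this identity by $g$ gives $[x^{rs},{}^g w_2]={}^{gh}(z^{rs})\sim z^{rs}$, so $G$ is $(k,m,rs)$-quasi-Honda. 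I expect no genuine obstacle: the only points demanding care are the chain of conjugations in this last step and the elementary observation, used to feed one application into the next, that passing to conjugates and to $r$-th powers preserves the order-divisibility conditions in the hypotheses.
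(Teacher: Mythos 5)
Your proof is correct and is exactly the routine verification the paper leaves to the reader (the proposition is stated with ``one readily checks'' and no written proof): the explicit formulas $w=xu$ and $(x^{-1}, {}^x y^{-1}, z^{-1})$ give $-1$, and composing the $r$- and $s$-instances, with the observation that conjugation and $r$-th powers preserve the order-divisibility hypotheses, gives closure, after which finiteness of $(\ZZ/\lcm(k,m)\ZZ)^*$ and $(\ZZ/\lcm(k,l,m)\ZZ)^*$ yields the subgroup property. The extra conjugation you insert in the quasi-Honda case to turn $[x^r,w_1]\sim z^r$ into an equality before applying the $s$-instance is handled correctly.
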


Recall that $k,l,m\geq 2$ are still fixed integers. 

Let $G$ be a group and let $n$ be an integer. We write $G[n]$ for $\{g \in G: g^n = 1\}$ and $G[n]^0$ for $G[n] \setminus \{1\}$. We write $G/{\sim}$ for the set of conjugacy classes of $G$. 
Given $C,D,E \in G/{\sim}$, we write $CDE$ for $\{cde : c \in C, d \in D, e \in E\} \subseteq G$ and write $C^n$ for $\{c^n: c \in C\} \in G/{\sim}$. 
Define 
$$B_G:= \{(C,D,E) \in (G/{\sim})^3: 1 \in CDE, \, C \subseteq G[k]^0, \, D \subseteq G[l]^0, \, E \subseteq G[m]^0\}.$$
Note that given a representative $[r] \in \ZZ$ of some $r \in (\ZZ/\lcm(k,l,m)\ZZ)^*$ and $C \subseteq G[k]$, the conjugacy class $C^r := C^{[r]}$ is well-defined because $C^{[r]} = C^{[r]+\lcm(k,l,m)}$. Given $D \subseteq G[l]$ and $E \subseteq G[m]$, the same is true for $D^r$ and $E^r$. Finally we define 
$$M_G:= \{r \in (\ZZ/\lcm(k,l,m)\ZZ)^*\mid \forall (C,D,E) \in B_G: (C^r,D^r,E^r) \in B_G\}.$$

In Lemma \ref{Lemma: multiplier general}, we use the following equivalent definition of a $(k,l,m,r)$-quasi-Burnside group. 
Let $r \in (\ZZ/\lcm(k,l,m)\ZZ)^*$.
A group $G$ is $(k,l,m,r)$-quasi-Burnside if for all $c,d,e \in G$ one has
$$cde=c^k = d^l = e^m = 1 \Rightarrow \exists c',d',e' \in G: c'd'e'=1, \, c' \sim c^r , \,d' \sim d^r, \,e' \sim e^r.$$

\begin{lemma}
\label{Lemma: multiplier general}
Let $G$ be a group and let $r \in (\ZZ/\lcm(k,l,m)\ZZ)^*$. Then $G$ is $(k,l,m,r)$-quasi-Burnside if and only if $r \in M_G$. Moreover, $M_G$ is a subgroup of $(\ZZ/\lcm(k,l,m)\ZZ)^*$ containing $-1$.
\end{lemma}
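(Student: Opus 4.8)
The plan is to prove the stated equivalence by unwinding both definitions into statements about products of conjugacy classes, and then to check separately that $M_G$ is a subgroup containing $-1$.

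For the equivalence, I would start from the reformulation of the $(k,l,m,r)$-quasi-Burnside property recalled just above the lemma, which concerns triples $c,d,e \in G$ with $cde = c^k = d^l = e^m = 1$. The single substantive translation to make is the following: if $C$ denotes the conjugacy class of $c$, then an element $c'$ satisfies $c' \sim c^r$ if and only if $c' \in C^r$, where $C^r$ is the conjugacy class of $c^r$, which is well defined by the remarks preceding the lemma. Hence the conclusion ``$\exists\, c',d',e'$ with $c'd'e'=1$, $c'\sim c^r$, $d'\sim d^r$, $e'\sim e^r$'' is equivalent to ``$1 \in C^rD^rE^r$''. The classes appearing in the image triple automatically satisfy the side conditions in the definition of $B_G$: one has $(c^r)^k = (c^k)^r = 1$, so $C^r \subseteq G[k]$, and $c^r \ne 1$ whenever $c \ne 1$ since $r$ is a unit modulo $\lcm(k,l,m)$ and hence coprime to $\mathrm{ord}(c)$, so $C^r \subseteq G[k]^0$; and similarly for $D^r$ and $E^r$.

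The only gap between the two formulations is that the quasi-Burnside condition also ranges over degenerate triples in which one of $c,d,e$ equals $1$, whereas $B_G$ only records triples of nontrivial classes. I would dispose of the degenerate cases directly: e.g.\ if $c = 1$ then $e = d^{-1}$ and one may take $(c',d',e') = (1, d^r, d^{-r})$, and symmetrically in the other cases. Granting this, ``$G$ is $(k,l,m,r)$-quasi-Burnside'' becomes equivalent to ``$1 \in C^rD^rE^r$ for all $(C,D,E) \in B_G$'', hence to ``$(C^r,D^r,E^r) \in B_G$ for all $(C,D,E) \in B_G$'', i.e.\ to $r \in M_G$.

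For the moreover part, I would first note that $1 \in M_G$ is immediate, and that if $r,s \in M_G$ then for any $(C,D,E) \in B_G$, applying the defining condition of $M_G$ first with $s$ and then with $r$ gives $\bigl((C^s)^r,(D^s)^r,(E^s)^r\bigr) \in B_G$, while $(C^s)^r = C^{rs}$, so $rs \in M_G$; as $(\ZZ/\lcm(k,l,m)\ZZ)^*$ is finite, this makes $M_G$ a subgroup. To see $-1 \in M_G$, I would pick $(C,D,E) \in B_G$ and representatives $c,d,e$ with $cde = 1$; then $d^{-1} = ec$ is conjugate to $ce$, so $\hat c := c^{-1} \in C^{-1}$, $\hat d := ce \in D^{-1}$, $\hat e := e^{-1} \in E^{-1}$ satisfy $\hat c\hat d\hat e = 1$, giving $(C^{-1},D^{-1},E^{-1}) \in B_G$. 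I do not anticipate a genuine obstacle: the statement is essentially a dictionary entry between two sets of conditions. The only points needing a little care are keeping the order of the factors straight in the $-1$ computation (conjugacy classes may be cyclically permuted inside a product but not arbitrarily permuted) and making sure the degenerate triples are covered so that the equivalence really is an ``if and only if''.
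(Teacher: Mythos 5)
Your proof is correct and follows essentially the same route as the paper: translate the quasi-Burnside condition into the statement that $1 \in C^rD^rE^r$ for every admissible triple of classes, dispose of the degenerate triples involving the trivial class, and identify the resulting condition with $r \in M_G$. The only difference is in the ``moreover'' part, where the paper simply invokes Proposition \ref{Proposition: r st G is klmr qB qH is subgroup} (stated without proof), whereas you verify closure under multiplication and $-1 \in M_G$ explicitly; your computations there (in particular $\hat c\hat d\hat e = c^{-1}(ce)e^{-1} = 1$ with $ce \sim d^{-1}$) are correct.
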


\begin{proof}
The group $G$ is $(k,l,m,r)$ quasi-Burnside if and only if for all $c,d,e \in G$ with $c^k = d^l = e^m = cde = 1$, there exist $c',d',e' \in G$ such that $c'd'e' = 1, c' \sim c^r, \, d' \sim d^r, \, e' \sim e^r$, so if and only if for all $(C,D,E) \in (G/{\sim})^3$ with $C \subseteq G[k], \, D \subseteq G[l], \, E \subseteq G[m], \, 1 \in CDE$ we have $1 \in C^rD^rE^r$. Clearly, if $(C,D,E) \in (G/{\sim})^3$ with $C \subseteq G[k], \, D \subseteq G[l], \, E \subseteq G[m], \, 1 \in CDE$ and at least one of $C,D,E$ is equal to $\{1\}$, then $1 \in C^rD^rE^r$. Thus every group is $(k,l,m,r)$ quasi-Burnside if and only if for all $(C,D,E) \in (G/{\sim})^3$ with $C \subseteq G[k]^0, \, D \subseteq G[l]^0, \, E \subseteq G[m]^0, \, 1 \in CDE$ we have $1 \in C^rD^rE^r$, which is equivalent to $r \in M_G$.
The second statement then follows from Proposition \ref{Proposition: r st G is klmr qB qH is subgroup}.
\end{proof}

\begin{lemma}
\label{Lemma: Chinese remainder theorem}
The map $(\ZZ/\lcm(k,m)\ZZ)^* \to (\ZZ/k\ZZ)^* \times (\ZZ/m\ZZ)^*$, $a \mapsto (a \Mod{k}, a \Mod{m})$ is an isomorphism of groups if and only if $\gcd(k,m) \le 2$.
\end{lemma}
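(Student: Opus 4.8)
The plan is to argue via cardinalities after observing that injectivity is automatic. First I would note that the map in question, call it $\phi$, is just the restriction to unit groups of the canonical ring homomorphism $\ZZ/\lcm(k,m)\ZZ \to \ZZ/k\ZZ \times \ZZ/m\ZZ$ given by the two reductions, so it is certainly a well-defined group homomorphism; the only issue is whether it is bijective. Next I would show that $\phi$ is \emph{always} injective, independently of $\gcd(k,m)$: an element $a \in (\ZZ/\lcm(k,m)\ZZ)^*$ in the kernel satisfies $a \equiv 1 \pmod k$ and $a \equiv 1 \pmod m$, hence $\lcm(k,m) \mid a - 1$ for any integer lift of $a$, so $a = 1$. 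Since both groups are finite, it follows that $\phi$ is an isomorphism if and only if it is surjective, if and only if $\#(\ZZ/\lcm(k,m)\ZZ)^* = \#(\ZZ/k\ZZ)^* \cdot \#(\ZZ/m\ZZ)^*$.

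It then remains to compare these cardinalities. Writing $\varphi$ for Euler's totient function and $d = \gcd(k,m)$, I would invoke the elementary identity $\varphi(k)\varphi(m) = \varphi(d)\,\varphi(\lcm(k,m))$. This is checked prime by prime using multiplicativity of $\varphi$: for a prime $p$, if $p^a$ and $p^b$ are the exact powers of $p$ dividing $k$ and $m$ with, say, $a \le b$, then $p^a$ and $p^b$ are the exact powers of $p$ dividing $d$ and $\lcm(k,m)$, so both sides collect the same local factor $\varphi(p^a)\varphi(p^b)$. Consequently the cardinalities agree precisely when $\varphi(d) = 1$, which holds exactly for $d \in \{1,2\}$, i.e. exactly when $\gcd(k,m) \le 2$.

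I expect no serious obstacle here; the totient identity is the only real ingredient and it is standard. As a sanity check and an alternative, more hands-on route for the ``only if'' direction, I would remark that when $\gcd(k,m) \ge 3$ the pair $(1,-1) \in (\ZZ/k\ZZ)^* \times (\ZZ/m\ZZ)^*$ is not in the image of $\phi$: a common solution to $a \equiv 1 \pmod k$ and $a \equiv -1 \pmod m$ would force $2 \equiv 0 \pmod{\gcd(k,m)}$, which is impossible. This exhibits the failure of surjectivity directly, without the cardinality count.
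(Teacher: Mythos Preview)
Your proof is correct and follows essentially the same approach as the paper: both observe that the map is always an injective group homomorphism, reduce the question of bijectivity to a cardinality comparison via the identity $\varphi(k)\varphi(m) = \varphi(\gcd(k,m))\varphi(\lcm(k,m))$, and conclude by noting $\varphi(d)=1$ iff $d\le 2$. Your additional hands-on check that $(1,-1)$ is not in the image when $\gcd(k,m)\ge 3$ is a nice complement, but not part of the paper's argument.
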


\begin{proof}
Denote by $f$ the map from the lemma and by $\varphi$ the Euler totient function. 
Clearly $f$ is an injective group homomorphism.
Since $\varphi(\lcm(k,m)) \cdot \varphi(\gcd(k,m)) = \varphi(k) \cdot \varphi(m)$, we have that $f$ is surjective if and only if $\varphi(\gcd(k,m)) = 1$, which is equivalent to $\gcd(k,m) \le 2$.
\end{proof}

If $\gcd(k,m) \le 2$ and $r \in (\ZZ/\lcm(k,m)\ZZ)^*$, then write $r^*$ for the unique element of $(\ZZ/\lcm(k,m)\ZZ)^*$ with $r^* \equiv r \pmod{k}$ and $r^* \equiv -r \pmod{m}$. Clearly $r^* = 1^* \cdot r$ and $(r^*)^* = r$. Also note that $(-r)^* = -(r^*)$, so the notation $-r^*$ is unambiguous.

\begin{proposition}
\label{Proposition: r^*}
Let $r \in \{\pm 1\} \subseteq (\ZZ/\lcm(k,m)\ZZ)^*$. Then every group is $(k,m,r)$-quasi-Honda. Moreover, if $\gcd(k,m) \le 2$, then every group is $(k,m,r^*)$-quasi-Honda.
\end{proposition}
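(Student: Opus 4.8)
The plan is to write down explicit choices of $w$ in each case and verify them directly, using the conventions $[a,b] = aba^{-1}b^{-1}$ and ${}^y a = yay^{-1}$. For the first assertion, that every group is $(k,m,r)$-quasi-Honda when $r \in \{\pm 1\}$, I would simply record the formulas promised in the introduction. Let $G$ be a group and $x,u,z \in G$ with $[x,u]=z$ and $x^k = z^m = 1$. If $r = 1$, then $w = u$ gives $[x^r, w] = [x,u] = z = z^r$. If $r = -1$, then $w = xu$ gives $[x^{-1}, xu] = x^{-1}(xu)x(xu)^{-1} = uxu^{-1}x^{-1} = z^{-1} = z^r$. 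In both cases $[x^r,w] \sim z^r$ (indeed with equality), and no hypothesis on $k,m$ is needed.

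For the ``moreover'' part, assume $\gcd(k,m)\le 2$, so that $1^*$ is defined, with $1^* \equiv 1 \pmod{k}$ and $1^* \equiv -1 \pmod{m}$, and recall $(-1)^* = 1^* \cdot (-1)$. By Proposition \ref{Proposition: r st G is klmr qB qH is subgroup}, the set of $r' \in (\ZZ/\lcm(k,m)\ZZ)^*$ for which a given group $G$ is $(k,m,r')$-quasi-Honda is a subgroup of $(\ZZ/\lcm(k,m)\ZZ)^*$ containing $-1$; combined with the first assertion, it therefore suffices to show that every group is $(k,m,1^*)$-quasi-Honda, as then this subgroup also contains $1^* \cdot (-1) = (-1)^*$.

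To do that, let $G$ be a group and $x,u,z \in G$ with $[x,u]=z$ and $x^k = z^m = 1$. Since $1^* \equiv 1 \pmod{k}$ and $x^k=1$ we have $x^{1^*} = x$, and since $1^* \equiv -1 \pmod{m}$ and $z^m=1$ we have $z^{1^*} = z^{-1}$. I would then take $w = u^{-1}$ and check $[x^{1^*}, w] = [x, u^{-1}] = xu^{-1}x^{-1}u = u^{-1}(uxu^{-1}x^{-1})u = {}^{u^{-1}}(z^{-1}) = {}^{u^{-1}}(z^{1^*})$, which is conjugate to $z^{1^*}$, as required.

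There is no real obstacle here; the work is entirely bookkeeping. The points to watch are that the residue of $r^*$ modulo $k$ governs $x$ (collapsing $x^{r^*}$ to $x^{\pm 1}$ since $x^k=1$) while the residue modulo $m$ governs $z$, and that the commutator convention is handled correctly in the identity $[x,u^{-1}] = {}^{u^{-1}}(z^{-1})$. If one prefers to avoid invoking the subgroup property, one can instead note that $w = u^{-1}$ also works verbatim for $r = -1$: there $x^{(-1)^*} = x^{-1}$ and $z^{(-1)^*} = z$, and $[x^{-1}, u^{-1}] = x^{-1}u^{-1}xu = {}^{(ux)^{-1}}z \sim z$.
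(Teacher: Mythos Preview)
Your proof is correct and follows essentially the same approach as the paper: both reduce the first assertion to the explicit formulas from the introduction, and for the second assertion both verify directly that $[x,u^{-1}] = {}^{u^{-1}}(z^{-1})$ to handle $1^*$, then invoke Proposition~\ref{Proposition: r st G is klmr qB qH is subgroup} to obtain the $(-1)^*$ case. Your additional direct verification $[x^{-1},u^{-1}] = {}^{(ux)^{-1}}z$ for $(-1)^*$ is a harmless extra, not present in the paper.
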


\begin{proof}
The first statement was explicitly proved in the introduction.
Suppose $\gcd(k,m) \le 2$. If for elements $x,u,z$ of some group $G$ we have $[x,u] = z$, $x^k = z^m = 1$, then $[x^{1^*}, u^{-1}] = [x,u^{-1}] = {}^{u^{-1}}z^{-1} = {}^{u^{-1}}z^{1^*}$. Proposition \ref{Proposition: r st G is klmr qB qH is subgroup} then gives that $G$ is $(k,m,-1^*)$-quasi-Honda.
\end{proof}

Define $B_{k,l,m} = \langle a,c \mid a^k=(a^{-1}c)^l=c^m=1 \rangle$ and $H_{k,m} = \langle a,b,c \mid [a,b] = c,\, a^k = c^m = 1 \rangle$. These groups have certain universal properties, which are described in Proposition \ref{Proposition: universal groups}. In \S \ref{Section: universal groups}, we will determine the structure of these groups, and prove some of their properties.

\begin{proposition}
\label{Proposition: universal groups}
Let $r \in (\mathbb{Z}/\lcm(k,l,m)\mathbb{Z})^*$.
\begin{propenum}
    \item \label{Proposition: universal q-Burnside group}
    Every group is $(k,l,m,r)$-quasi-Burnside if and only if there exist $g,h\in B_{k,l,m}$ such that $a^r \cdot {}^g(a^{-1}c)^r = {}^hc^r$.
    \item \label{Proposition: universal q-Honda group}
    Every group is $(k,m,r)$-quasi-Honda if and only if there exist $d,e \in H_{k,m}$ such that $[a^r,d] = {}^ec^r$.
\end{propenum}
\end{proposition}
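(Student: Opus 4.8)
The two statements have the same shape: an "every group" statement is equivalent to a concrete equation holding in a specific universal group. The plan is to prove both by exhibiting $B_{k,l,m}$ (resp. $H_{k,m}$) as the \emph{universal} witness via its presentation, using the standard correspondence between group presentations and the solvability of systems of equations in every group. For part (a), I would first observe that, by the equivalent definition of $(k,l,m,r)$-quasi-Burnside recalled just before Lemma \ref{Lemma: multiplier general}, being $(k,l,m,r)$-quasi-Burnside for \emph{every} group means: whenever $c^k = d^l = e^m = cde = 1$ in a group $G$, there exist $g,h \in G$ with ${}^g c^r \cdot {}^h d^r = e^{-r}$, or after rearranging and renaming, an identity of the shape $a^r \cdot {}^g(a^{-1}c)^r = {}^h c^r$ where $a := c^{-1}$, $a^{-1}c =: d$ (so $d^l = 1$), and $c =: e^{-1}$ up to sign conventions. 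The point is that the three elements $a, a^{-1}c, c$ of $B_{k,l,m}$ satisfy exactly the defining relations $a^k = (a^{-1}c)^l = c^m = 1$ and nothing more.

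The key step is the "only if" direction: specialize the universal statement to $G = B_{k,l,m}$ with $x := a$, and note that $a$ has order dividing $k$, $a^{-1}c$ has order dividing $l$, and $c$ has order dividing $m$ by the relations, so the hypotheses of $(k,l,m,r)$-quasi-Burnside are met, and the conclusion directly yields $g,h \in B_{k,l,m}$ with $a^r \cdot {}^g(a^{-1}c)^r = {}^h c^r$ (after translating from the $x',y',z'$ formulation back to the multiplicative-triple formulation via the argument in the proof of Proposition \ref{Proposition: quasi-Burnside implies quasi-Honda}). For the "if" direction: given such $g,h$ in $B_{k,l,m}$ and an arbitrary group $G$ with $x,y,z \in G$ satisfying $xy = z$, $x^k = y^l = z^m = 1$ — equivalently $c := x^{-1}$, $d := y$, $e := z^{-1}$ — there is a homomorphism $\varphi\colon B_{k,l,m} \to G$ sending $a \mapsto x$ and $c \mapsto xy$ (well-defined precisely because $x, xy$ satisfy the defining relations), and applying $\varphi$ to the identity $a^r \cdot {}^g(a^{-1}c)^r = {}^h c^r$ produces the required conjugates $x' := x^r$, $y' := {}^{\varphi(g)} y^r$, $z' := {}^{\varphi(h)} z^r$ with $x'y' = z'$.

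Part (b) is entirely analogous with $H_{k,m} = \langle a,b,c \mid [a,b] = c,\ a^k = c^m = 1\rangle$: the three elements $a, b, [a,b]$ satisfy exactly the relations needed to set up the specialization homomorphism, and the equation $[a^r,d] = {}^e c^r$ is the image of the quasi-Honda conclusion. For the "only if" direction, specialize to $G = H_{k,m}$, $x := a$, $u := b$, $z := [a,b] = c$, noting $a^k = c^m = 1$ holds. For the "if" direction, given $d,e \in H_{k,m}$ with $[a^r,d] = {}^e c^r$ and given $x,u,z$ in an arbitrary $G$ with $[x,u] = z$, $x^k = z^m = 1$, the assignment $a \mapsto x$, $b \mapsto u$, $c \mapsto z$ extends to a homomorphism $H_{k,m} \to G$ (well-defined by the hypotheses), and pushing the equation forward gives $w := {}^{?}$ — more precisely $[x^r, \varphi(d)] = {}^{\varphi(e)} z^r$, so $[x^r,\varphi(d)] \sim z^r$, as wanted.

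The main obstacle, such as it is, is bookkeeping: correctly matching the sign conventions and the translation between the "$x'y'=z'$ with conjugacies" formulation and the "multiplicative triple equal to $1$" formulation (and, in the Burnside case, absorbing the extra conjugators $g,h$ into the $x',y',z'$). One must also be slightly careful that in the "if" direction the homomorphism out of $B_{k,l,m}$ or $H_{k,m}$ is genuinely well-defined — this is where the hypothesis $xy = z$, $x^k = y^l = z^m = 1$ (resp. $[x,u]=z$, $x^k = z^m = 1$) is used, and nowhere else. There is no hard inequality or structural input here; the content is that $B_{k,l,m}$ and $H_{k,m}$ are, by construction, the universal groups for their respective systems of relations, so everything reduces to the universal mapping property of a presentation.
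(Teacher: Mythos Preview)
Your proposal is correct and follows essentially the same approach as the paper: specialize to $G = B_{k,l,m}$ (resp.\ $H_{k,m}$) for the ``only if'' direction, and use the universal mapping property of the presentation for the ``if'' direction. The only small point you leave implicit is that the quasi-Burnside conclusion in $B_{k,l,m}$ gives $x' \sim a^r$ rather than $x' = a^r$, so one must conjugate the resulting identity by the inverse of the first conjugator to put it in the stated form $a^r \cdot {}^g(a^{-1}c)^r = {}^hc^r$; the paper handles this by simply declaring the direction ``Trivial''.
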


\begin{proof}
The proofs are similar, so we only prove (a).
``$\Rightarrow$'' Trivial.
``$\Leftarrow$'' Let $g,h \in B_{k,l,m}$ such that $a^r \cdot {}^g(a^{-1}c)^r = {}^hc^r$. 
If $x,y,z$ are elements of a group $G$ with $xy = z$, $x^k = y^l = z^m = 1$, then there exists a homomorphism $f:B_{k,l,m} \to G: a \mapsto x$, $c \mapsto z$. Applying $f$ to $a^r \cdot {}^g(a^{-1}c)^r = {}^hc^r$ gives ${}^{f(h)}z^r = x^r \cdot {}^{f(g)}(x^{-1}z)^r = x^r \cdot {}^{f(g)}y^r$.
\end{proof}

\begin{proof}[Proof of the implication (3) $\Rightarrow$ (1) of Theorem \ref{introduction theorem qB}]
As shown in the introduction, if $r \in \{\pm 1\}$, then every group is $(k,l,m,r)$-quasi-Burnside.
If $\frac{1}{k}+\frac{1}{l}+\frac{1}{m} = 1$, then up to permutation we have that $(k,l,m) \in \{(2,3,6),(2,4,4),(3,3,3)\}$, thus $(\ZZ/\lcm(k,l,m)\ZZ)^* = \{\pm1\}$, so every group is $(k,l,m,r)$-quasi-Burnside.

Suppose $\frac{1}{k}+\frac{1}{l}+\frac{1}{m} > 1$. Then it is a well-known fact that the von Dyck group $B_{k,l,m}$ is finite. (See for example \S 15 of \cite{Johnson}.) By Proposition \ref{Proposition: Finite groups qb, qh, h}, every finite group is $(k,l,m,r)$-quasi-Burnside, so there exist $f,g,h \in B_{k,l,m}$ such that ${}^fa^r \cdot {}^g(a^{-1}c^{-1})^r \cdot {}^hc^r = 1$. For such $f,g,h$ we have $a^r \cdot {}^{f^{-1}g}(a^{-1}c^{-1})^r \cdot {}^{f^{-1}h}c^r = 1$, so Proposition \ref{Proposition: universal q-Burnside group} gives that every group is $(k,l,m,r)$-quasi-Burnside.
\end{proof}

\section{Projective special linear group}
\label{Section: Projective special linear group}

The \textit{special linear group of degree 2 over the real numbers} $\mathrm{SL}_2(\RR)$ is the group of real 2 by 2 matrices with determinant 1. Denote by $\mathrm{I}$ the identity element of $\mathrm{SL}_2(\mathbb{R})$. The \textit{projective special linear group of degree 2 over the real numbers} $\PSL$ is the quotient group $\mathrm{SL}_2(\RR)/\langle -\mathrm{I} \rangle$.

In this section, we describe $B_G$ and $M_G$, as defined just before Lemma \ref{Lemma: multiplier general}, for $G = \PSL$. This allows us to prove the implication (2) $\Rightarrow$ (3) of Theorem \ref{introduction theorem qB} (just above Proposition \ref{Proposition: PSL not hemi-Burnside}), and Proposition \ref{Proposition: PSL not hemi-Burnside}, which we use in \S \ref{Section: universal groups} towards the proof of the equivalence (1) $\Leftrightarrow$ (2) of Theorem \ref{introduction theorem qH}.

Let $H = (\frac{1}{k}\ZZ/\ZZ) \oplus (\frac{1}{l}\ZZ/\ZZ) \oplus (\frac{1}{m}\ZZ/\ZZ)$ and regard it as a subgroup of $(\RR/\ZZ)^3$. 
If $r \in \ZZ/\lcm(k,l,m)\ZZ$ and $(\frac{a}{k},\frac{b}{l},\frac{c}{m}) \in H$, then we define $r \cdot (\frac{a}{k},\frac{b}{l},\frac{c}{m}) = (\frac{ra}{k},\frac{rb}{l},\frac{rc}{m})$, making $H$ into a $\ZZ/\lcm(k,l,m)\ZZ$-module.
Let $S = \{(x,y,z) \in \RR^3 : x>0$, $y>0$, $z>0$, $x+y+z<1\}$ and view it as a subset of $(\RR/\ZZ)^3$ via the canonical quotient map $\RR^3 \to (\RR/\ZZ)^3$.
Define 
$$M = \{r \in (\ZZ/\lcm(k,l,m)\ZZ)^*: r \cdot (H \cap ( S \cup -S)) = H \cap(S \cup -S)\}.$$ 
See below a picture of $S$ and $-S$ inside $(\RR/\ZZ)^3$.

\begin{center}
    \includegraphics[scale = 0.2]{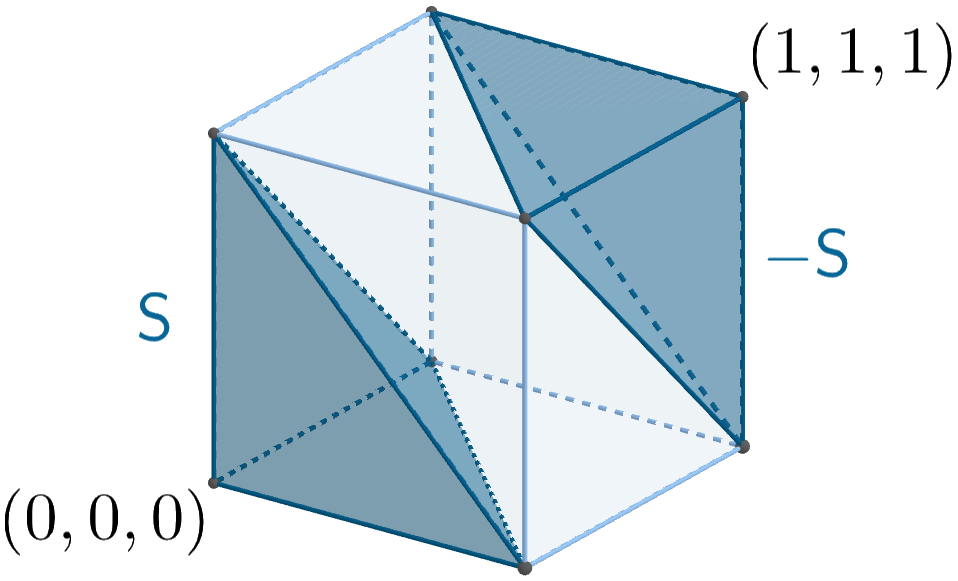}
\end{center}

\begin{proposition}
\begin{propenum}
\item \label{Proposition: klmr-Burnside if and only if r in Mklm}
We have $M_{\PSL} = M$. 
\item \label{Proposition: If r in Mklm, then r in pm 1}
If $\frac{1}{k}+\frac{1}{l}+\frac{1}{m}<1$, then $M_{\PSL} = \{\pm1\}$.
\end{propenum}
\end{proposition}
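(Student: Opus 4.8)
The plan is to prove both parts together by first establishing a concrete description of $B_{\PSL}$ in terms of rotation angles, and then reading off $M_{\PSL}$. The key structural fact I would invoke is the classification of elements of finite order in $\PSL$: every such element is conjugate to the image of a rotation matrix $\begin{pmatrix} \cos\theta & -\sin\theta \\ \sin\theta & \cos\theta \end{pmatrix}$, and two such images are conjugate in $\PSL$ precisely when the corresponding angles $\theta$ agree up to sign in $\RR/\pi\ZZ$ (the factor $\pi$ rather than $2\pi$ coming from the quotient by $\langle -\mathrm{I}\rangle$). Normalising, a nontrivial element of order dividing $k$ in $\PSL$ corresponds to a well-defined class $\pm \frac{a}{k} \in (\frac{1}{k}\ZZ/\ZZ)/\{\pm 1\}$ with $a \not\equiv 0$. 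So a triple $(C,D,E) \in B_{\PSL}$ is encoded by a triple of such classes $\bigl(\pm\tfrac{a}{k}, \pm\tfrac{b}{l}, \pm\tfrac{c}{m}\bigr)$, and the first main step is to show that the condition $1 \in CDE$ — i.e.\ that three rotations by these angles can be composed to the identity — holds if and only if, after choosing signs of the representatives, one has $\frac{a}{k} + \frac{b}{l} + \frac{c}{m} \in S \cup -S \cup \{$appropriate boundary/degenerate cases$\}$ inside $\RR/\ZZ$. This is the classical ``three reflections / spherical triangle'' computation: a product of two elliptic rotations is elliptic, parabolic, or hyperbolic according to whether the sum of the half-angles is less than, equal to, or greater than the relevant threshold, and matching with a third prescribed rotation forces the triangle-inequality-type constraints that $S$ encodes. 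I would carry this out by reducing to $\mathrm{SL}_2(\RR)$, using the trace of the product $g_1 g_2$ and the standard fact $\mathrm{tr}(g_1 g_2) = $ a fixed bilinear-ish expression in $\cos\theta_1,\cos\theta_2$ and the inner product of the fixed-point data, so that as $g_2$ ranges over its conjugacy class the trace sweeps a closed interval; membership of the third class's trace in that interval is exactly the $S\cup -S$ condition.

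With that description in hand, part (a) is essentially definitional: the map $(C,D,E) \mapsto \bigl(\tfrac{a}{k}, \tfrac{b}{l}, \tfrac{c}{m}\bigr)$ identifies $B_{\PSL}$ with $H \cap (S \cup -S)$ modulo the coordinatewise sign action (with the degenerate $\{1\}$-components removed, matching $G[k]^0$ etc.), and the $\ZZ/\lcm(k,l,m)\ZZ$-action $C \mapsto C^r$ on the left corresponds to the module action $r\cdot(-)$ on $H$ on the right — because raising a rotation by angle $\theta$ to the $r$-th power gives angle $r\theta$. Hence $r$ preserves $B_{\PSL}$ iff $r$ preserves $H \cap (S\cup -S)$, i.e.\ $M_{\PSL} = M$. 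I would just need to be a little careful that the sign ambiguity is harmless: $r$ and $-r$ act the same way on the set-up-to-sign, which is consistent with $-1 \in M$ always.

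For part (b), assume $\frac{1}{k} + \frac{1}{l} + \frac{1}{m} < 1$. One inclusion $\{\pm 1\} \subseteq M$ is immediate from Proposition~\ref{Proposition: r st G is klmr qB qH is subgroup} (or directly). For the reverse, suppose $r \in M = M_{\PSL}$; I want $r \equiv \pm 1$. The idea is to exhibit a specific point of $H \cap (S \cup -S)$ that pins down $r$. A natural candidate: since $\frac1k + \frac1l + \frac1m < 1$, the point $v = (\frac1k, \frac1l, \frac1m)$ lies in $S$, so $v \in H \cap (S\cup -S)$; then $r\cdot v = (\frac rk, \frac rl, \frac rm)$ must also lie in $H \cap (S\cup -S)$. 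Working coordinatewise in $\RR/\ZZ$, write $\frac{r}{k} \equiv \frac{\varepsilon_k a_k}{k}$ with $\varepsilon_k \in \{\pm1\}$ and $1 \le a_k \le k/2$, etc.; membership in $S \cup -S$ then says either all of $\frac{a_k}{k}, \frac{a_l}{l}, \frac{a_m}{m}$ have a common sign pattern summing to something in $(0,1)$, which forces each $a_\bullet$ to be small. Pushing this: one also knows that \emph{every} element of $H \cap (S \cup -S)$ is scaled into $H \cap (S\cup -S)$, and I would iterate — apply $r$ to the images repeatedly, or apply $r$ to several well-chosen points (e.g.\ points near the "corner" of $S$, or use that $M$ is a group so $r^{-1}$ also works) — to conclude that multiplication by $r$ can only be $\pm$identity modulo $k$, modulo $l$, and modulo $m$ simultaneously. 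The cleanest route is probably: (i) show $r \equiv \pm 1 \pmod{k}$ by feeding in points whose last two coordinates are fixed at tiny values $\frac1l,\frac1m$ while the first coordinate ranges over all $\frac{a}{k}$ with $\frac{a}{k} + \frac1l + \frac1m < 1$ — the constraint that $r$ must map this entire arithmetic progression back into $(S\cup -S)$ forces $a \mapsto \pm a \pmod k$ for all such $a$, and since there are enough such $a$ (as $\frac1l + \frac1m < 1$ leaves room), this forces $r \equiv \pm 1 \pmod k$; (ii) symmetrically for $l$ and $m$; (iii) combine via Lemma~\ref{Lemma: Chinese remainder theorem}-style bookkeeping, noting the signs must be globally consistent (if $r \equiv 1 \pmod k$ but $r \equiv -1 \pmod m$ on all these points, one gets a contradiction from a point where the first and third coordinates interact), so $r \equiv 1$ everywhere or $r \equiv -1$ everywhere, i.e.\ $r \in \{\pm 1\}$ in $(\ZZ/\lcm(k,l,m)\ZZ)^*$.

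The main obstacle I anticipate is the first step: getting a clean, fully correct statement of \emph{exactly} which triples of rotation-angle-classes multiply to the identity in $\PSL$, including the boundary behaviour (parabolic elements, the case where the angle-sum hits $1$, and the degenerate cases where one factor is trivial) and the precise role of the $\{\pm1\}$-ambiguity in each coordinate. This is where the geometry of $\PSL$ (hyperbolic triangles, or equivalently the trace-interval computation in $\mathrm{SL}_2(\RR)$) really has to be done carefully; once that dictionary is set up correctly, both (a) and (b) follow by elementary, if slightly fiddly, arithmetic in $H$. A secondary nuisance in (b) is making sure "enough points fit in $S$" — i.e.\ that the hypothesis $\frac1k+\frac1l+\frac1m < 1$ genuinely leaves enough room for the progression-covering argument — but since we may also use that $\frac1l + \frac1m \le \frac12 + \frac12 = 1$ with equality excluded, and handle small cases like $(k,l,m)$ with two of them equal to $2$ separately, this should go through.
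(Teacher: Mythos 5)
Your proof hinges on a conjugacy classification of torsion elements in $\PSL$ that is not correct: an elliptic element of $\mathrm{SL}_2(\RR)$ with rotation angle $\theta$ is conjugate to the one with angle $-\theta$ only by matrices of \emph{negative} determinant, so in $\PSL$ the classes $C_a$ and $C_{-a}$ are distinct whenever $a \neq -a$; torsion classes are parametrized by $\QQ/\ZZ$ itself (this is Lemma \ref{Lemma: PSL TOR}), not by $(\QQ/\ZZ)/\{\pm 1\}$ — ``angles agree up to sign'' is the $\mathrm{PGL}_2(\RR)$ statement. This error propagates into your dictionary for part (a): the criterion for $1 \in C_aC_bC_c$ is not ``after choosing signs of the representatives the sum is admissible'' but the sign-free condition $[a]+[b]+[c]\notin(1,2)$ of Lemma \ref{Lemma: Orevkov} (which the paper simply cites from Orevkov — you correctly identify this trace/hyperbolic-triangle computation as your main obstacle, but the statement you intend it to produce is the wrong one). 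Consequently $B_{\PSL}$ is in bijection with $H\cap(S\cup -S\cup T)$, where $T$ is the degenerate locus $a+b+c=0$, with \emph{no} quotient by coordinatewise signs (Lemma \ref{Lemma: BPSL2R}); with your sign-quotiented version, ``$r$ preserves $B_{\PSL}$'' would not translate into the paper's set $M$, which is defined without any sign action, and you also need to dispose of $T$ (the paper observes that multiplication by $r$ permutes $H\cap T$, so it drops out).

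In part (b) the central deduction of your step (i) does not hold: for a fixed admissible $a$, membership of $(\frac{ra}{k},\frac{r}{l},\frac{r}{m})$ in $S\cup -S$ does \emph{not} force $ra\equiv \pm a \pmod{k}$ — for instance if $r\equiv 1 \pmod{l}$ and $\pmod{m}$ the constraint on the representative of $\frac{ra}{k}$ is a single inequality satisfied by many residues. What $r\in M$ actually gives is set-wise information: after fixing the $S$-versus-$-S$ ambiguity (the paper does this with a projection argument showing the whole slice $(\frac1k,\frac1l,z)$ maps into $S$ or entirely into $-S$, and then replaces $r$ by $\pm r$), multiplication by $r$ permutes an initial segment $\{1,\dots,c\}$ of residues; converting such a permutation statement into $r\equiv 1$ is exactly the nontrivial combinatorial content of the paper (the ceiling count of Lemma \ref{lemma: ceiling}, the estimate of Lemma \ref{Proposition r equiv 1 mod lcm(k,m)}, and Lemma \ref{Lemma: 1<c<m-2}), which your sketch asserts rather than proves, and your per-$a$ sign choices would additionally need an argument to be made uniform. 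Finally, your symmetric ``handle $k$, $l$, $m$ separately'' plan breaks down for the small moduli: if, say, $k=5$ and $l=m=100$, the admissible first coordinates are all of $\{1,\dots,k-1\}$, so the progression constraint mod $k$ is vacuous while $(\ZZ/k\ZZ)^*\neq\{\pm1\}$; the paper avoids this by normalizing $m\geq\max\{k,l\}$ and arguing in two stages (first $r'\equiv 1\pmod{\lcm(k,l)}$ via a counting contradiction using $\frac{m}{k}\geq 1$, then the mod-$m$ lemma), and some such asymmetric argument is genuinely needed.
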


We prove Proposition \ref{Proposition: klmr-Burnside if and only if r in Mklm} just below Lemma \ref{Lemma: BPSL2R}, and prove Proposition \ref{Proposition: If r in Mklm, then r in pm 1} just below Lemma \ref{Lemma: 1<c<m-2}.

Given $a \in \RR/\ZZ$, we write $\sigma_a$ for $\begin{pmatrix}
    \cos [a]\pi & -\sin [a]\pi \\
    \sin [a]\pi & \cos [a]\pi
\end{pmatrix} \cdot \langle -\mathrm{I} \rangle \in \PSL$, where $[a] \in \RR$ is some representative of $a$, and we write $C_a$ for the conjugacy class of $\sigma_a$. One readily shows that the definitions of $\sigma_a$ and $C_a$ do not depend on the choice of representative.
Write $\PSL^{\mathrm{Tor}}$ for the set of elements of finite order in $\PSL$, and given a subset $F \subseteq \PSL$ that is stable under conjugation in $\PSL$, denote by $F/{\sim}$ the quotient set of $F$ by conjugation in $\PSL$.
There is a $(\ZZ/\lcm(k,l,m)\ZZ)^*$-action on the set $X = (\PSL[k]/{\sim}) \times (\PSL[l]/{\sim}) \times (\PSL[m]/{\sim})$, defined by mapping $(r,(C_a,C_b,C_c)) \in (\ZZ/\lcm(k,l,m)\ZZ)^* \times X$ to $(C_a^r,C_b^r,C_c^r) \in X$. Write $*$ for this action, and note that for such $r$ and $(C_a,C_b,C_c)$ we have $r*(C_a,C_b,C_c) = (C_a^r,C_b^r,C_c^r) = (C_{ra},C_{rb},C_{rc})$.

\begin{lemma}
\begin{lemenum}
\item \label{Lemma: PSL TOR}
The map $\QQ/\ZZ \to \PSL^\mathrm{Tor}/{\sim}$ sending $a$ to $C_a$ is a bijection. Moreover, if $n$ is an integer and $a \in \QQ/\ZZ$, then $n \cdot a = 0$ if and only if $C_a^n = \{1\}$.
\item \label{Lemma: bijection H, PSLk/sim}
The map $\psi:H \to \PSL[k]/{\sim} \times \PSL[l]/{\sim} \times \PSL[m]/{\sim}$ that sends $(a,b,c)$ to $(C_a,C_b,C_c)$ is a bijection that respects the $(\ZZ/\lcm(k,l,m)\ZZ)^*$-actions, i.e., for all $(a,b,c) \in H$ and all $r \in (\ZZ/\lcm(k,l,m)\ZZ)^*$ we have $r*\psi(a,b,c) = \psi(r(a,b,c))$.
\end{lemenum}
\end{lemma}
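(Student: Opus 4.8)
\textbf{Proof proposal for Lemma \ref{Lemma: PSL TOR} and Lemma \ref{Lemma: bijection H, PSLk/sim}.}

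The plan is to first establish Lemma \ref{Lemma: PSL TOR}, from which Lemma \ref{Lemma: bijection H, PSLk/sim} will follow readily. For part (a), I would start from the well-known description of torsion in $\PSL$: an element of $\mathrm{SL}_2(\RR)$ has finite order modulo $\pm\mathrm{I}$ if and only if it is elliptic, i.e.\ conjugate in $\mathrm{SL}_2(\RR)$ to a rotation matrix $\begin{pmatrix}\cos\theta & -\sin\theta\\ \sin\theta & \cos\theta\end{pmatrix}$; concretely, such an element is diagonalizable over $\CC$ with eigenvalues $e^{\pm i\theta}$ on the unit circle, and it has finite order in $\PSL$ exactly when $\theta \in \pi\QQ$. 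So every element of $\PSL^{\mathrm{Tor}}$ is conjugate to some $\sigma_a$ with $a \in \QQ/\ZZ$, giving surjectivity of $a \mapsto C_a$. For injectivity I would use that two elliptic elements are conjugate in $\mathrm{SL}_2(\RR)$ iff they have the same trace \emph{and} the same orientation of rotation; passing to $\PSL$ identifies $\theta$ with $-\theta$ (since conjugating by $\mathrm{diag}(1,-1)$, which has determinant $-1$, is realized in $\mathrm{SL}_2(\RR)$ only up to the sign ambiguity absorbed by $\langle-\mathrm{I}\rangle$ together with the fact that $\theta$ and $\theta+\pi$ give the same class), so that $C_a = C_b$ iff $a = \pm b$ in $\RR/\ZZ$ — but wait, this would contradict injectivity. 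Let me reconsider: actually the correct statement is that in $\PSL$ the element $\sigma_a$ \emph{is} conjugate to $\sigma_{-a}$, so the map is \emph{not} injective on all of $\RR/\ZZ$; however the claim is about $\QQ/\ZZ \to \PSL^{\mathrm{Tor}}/{\sim}$, and here one must check that $a = -a$ in $\QQ/\ZZ$ is \emph{not} forced — so I think the resolution is that the paper's $\sigma_a$ with the representative taken in, say, $[0,1)$ and the angle scaled by $\pi$ covers each conjugacy class once because the \emph{trace} $2\cos[a]\pi$ determines $[a]\pi \bmod 2\pi$ only up to sign, and the sign is precisely what is quotiented; so in fact $C_a = C_{-a}$ always, and injectivity must be interpreted with this understood, or the lemma implicitly restricts representatives. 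I would look carefully at the paper's exact indexing convention here; most likely they intend $a \in \QQ/\ZZ$ genuinely and $C_a = C_{-a}$, so the map as literally stated is only injective after we note $a \mapsto C_a$ is well-defined and the two directions of rotation collapse — I would phrase it as: the trace gives a bijection onto $[-2,2]$-valued data together with the observation that $\sigma_a \sim \sigma_{-a}$ in $\PSL$ means the source should be read correctly. (In the write-up I would simply follow the standard fact that conjugacy classes of nontrivial elliptic elements of $\PSL$ are parametrized by rotation angle in $(0,\pi)$, equivalently by $a \in \QQ/\ZZ$ modulo sign, and trust the paper's convention resolves this.)

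For the ``moreover'' clause of (a): $C_a^n = \{1\}$ means $\sigma_a^n = 1$ in $\PSL$, i.e.\ $\begin{pmatrix}\cos n[a]\pi & -\sin n[a]\pi\\ \sin n[a]\pi & \cos n[a]\pi\end{pmatrix} \in \langle -\mathrm{I}\rangle$, which holds iff $n[a]\pi \in \pi\ZZ$, i.e.\ iff $n[a] \in \ZZ$, i.e.\ iff $na = 0$ in $\QQ/\ZZ$. This is a direct computation. In particular $\sigma_a$ has order dividing $k$ in $\PSL$ iff $ka = 0$, i.e.\ iff $a \in \frac{1}{k}\ZZ/\ZZ$, so $\PSL[k]/{\sim}$ is exactly $\{C_a : a \in \frac1k\ZZ/\ZZ\}$.

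For Lemma \ref{Lemma: bijection H, PSLk/sim}: combining the previous paragraph coordinatewise, $\psi$ sends $H = (\frac1k\ZZ/\ZZ)\oplus(\frac1l\ZZ/\ZZ)\oplus(\frac1m\ZZ/\ZZ)$ onto $\PSL[k]/{\sim}\times\PSL[l]/{\sim}\times\PSL[m]/{\sim}$; bijectivity of $\psi$ is just three copies of the bijectivity from part (a) restricted to the relevant torsion subgroups of $\QQ/\ZZ$. Compatibility with the $(\ZZ/\lcm(k,l,m)\ZZ)^*$-actions is immediate from the identity $C_a^r = C_{ra}$ recorded just before the lemma, applied in each coordinate: $r * \psi(a,b,c) = (C_a^r, C_b^r, C_c^r) = (C_{ra}, C_{rb}, C_{rc}) = \psi(ra, rb, rc) = \psi(r(a,b,c))$. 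I expect the main obstacle to be nailing down the precise conjugacy criterion in $\PSL$ — specifically being careful that $\sigma_a$ and $\sigma_{-a}$ fall together and that this is consistent with the stated bijection — together with verifying $C_a^r = C_{ra}$ is well-defined (which the paper has already asserted in the paragraph preceding the lemma, so I can cite it); everything else is elementary matrix computation.
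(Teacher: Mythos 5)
Your surjectivity argument, your computation for the ``moreover'' clause of (a), and your deduction of (b) from (a) are all fine and essentially the paper's argument. The genuine gap is the injectivity of $a \mapsto C_a$, exactly the point you left unresolved, and the resolution you lean towards is mathematically wrong: $\sigma_a$ is \emph{not} conjugate to $\sigma_{-a}$ in $\PSL$ (unless $a = -a$ in $\QQ/\ZZ$). The matrix $\mathrm{diag}(1,-1)$ you invoke has determinant $-1$, and so does its negative, so neither lift lies in $\mathrm{SL}_2(\RR)$; it defines an element of $\mathrm{PGL}_2(\RR)$ outside $\PSL$, and the sign ambiguity coming from $\langle -\mathrm{I}\rangle$ does not repair this. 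Concretely, conjugacy in $\PSL$ lifts as follows: for nonzero $a,b$ with representatives $[a],[b]\in(0,1)$, one has $C_a=C_b$ if and only if $s_{[a]}$ is $\mathrm{SL}_2(\RR)$-conjugate to $s_{[b]}$ or to $-s_{[b]}=s_{[b]-1}$; since an elliptic element of $\mathrm{SL}_2(\RR)$ is conjugate to a \emph{unique} $s_c$ with $c\in(-1,1)\setminus\{0\}$ (trace alone does not determine the class: $s_c$ and $s_{-c}$ are not $\mathrm{SL}_2(\RR)$-conjugate, the sense of rotation being a conjugacy invariant), and since $[b]-1<0<[a]$, this forces $[a]=[b]$. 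This is precisely how the paper proves injectivity, quoting the uniqueness statement (Theorem 3.1 of Conrad). Your fallback claim that nontrivial elliptic classes of $\PSL$ are ``parametrized by rotation angle in $(0,\pi)$, equivalently by $a$ modulo sign'' is also false: as an isometry of the hyperbolic plane, $\sigma_a$ rotates about its fixed point by the angle $2[a]\pi$, so the classes are parametrized by $[a]\in(0,1)$ with no identification of $a$ and $-a$.

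The error is not cosmetic: if $C_a$ were equal to $C_{-a}$, the map $\psi$ of part (b) would fail to be injective, and the identification of $B_{\PSL}$ with $H\cap(S\cup -S\cup T)$ in Lemma \ref{Lemma: BPSL2R} — hence the computation of $M_{\PSL}$ and the implication (2) $\Rightarrow$ (3) of Theorem \ref{introduction theorem qB} — would collapse. So you must commit to strict injectivity and prove it via the uniqueness of the rotation representative in $\mathrm{SL}_2(\RR)$, rather than leaving the matter to ``the paper's convention''.
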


\begin{proof}
We first prove statement (a). Write $\varphi$ for the map from statement (a), and note that $\mathrm{im}(\varphi) \subseteq \PSL^\mathrm{Tor}/{\sim}$, so it is well-defined. 
Given $c \in \RR$, we define $s_c:=\begin{pmatrix}
    \cos c\pi & -\sin c\pi \\
    \sin c\pi & \cos c\pi
\end{pmatrix}$. 
It is a well-known fact that if $A \in \mathrm{SL}_2(\mathbb{R})$ is of finite order, then for some $c \in \QQ$ we have $A \sim s_c$.
The pre-image under the canonical map $\mathrm{SL}_2(\mathbb{R}) \to \mathrm{PSL}_2(\mathbb{R})$ of a torsion element is a torsion element, so the elements of finite order in $\mathrm{PSL}_2(\mathbb{R})$ are conjugate to some $\sigma_a$ with $a \in \QQ/\ZZ$.
Thus the map from the lemma is surjective.

To show that $\varphi$ is injective, suppose $C_a = C_b$ with $a,b \in \QQ/\ZZ$. If $a=0$ or $b=0$, then $a = 0 = b$, so we may assume $a,b$ to be nonzero. Let $[a],[b] \in (0,1) \cap \QQ$ be representatives of respectively $a,b$. Then $s_{[a]} \sim s_{[b]}$ or $s_{[a]} \sim -s_{[b]} = s_{-1+[b]}$. It is a well-known fact that if for an element $A\in \mathrm{SL}_2(\RR)$ we have $|\mathrm{tr}(A)|<2$, then $A$ is conjugate to a unique matrix of the form $s_c$ with $c \in (-1,1) \setminus \{0\}$. (See for example Theorem 3.1 of \cite{Conrad}.) 
For all $c \in ((-1,1)\setminus \{0\}) \cap \QQ$ we have $|\mathrm{tr}(s_c)|<2$, so $s_{[a]} = s_{[b]}$ or $s_{[a]} = s_{-1+[b]}$. Clearly $s_{[a]} = s_{-1+[b]}$ is not possible, so $s_{[a]} = s_{[b]}$, and thus $a=b$. This proves the first part of statement (a). One easily shows that the second part follows from the first.

Now we prove statement (b). By statement (a), we have that $\psi$ restricted to ${\frac{1}{k}\ZZ/\ZZ}$ bijectively maps to $\PSL[k]/{\sim}$. The same is true when we replace $k$ by $l$ or $m$, so $\psi$ is a bijection. This bijection respects the $(\ZZ/\lcm(k,l,m)\ZZ)^*$-actions, since for all $(a,b,c) \in H$ and $r \in (\ZZ/\lcm(k,l,m)\ZZ)^*$ we have $(C_a^r,C_b^r,C_c^r) = (C_{ra},C_{rb},C_{rc})$.
\end{proof}

\begin{lemma}
\label{Lemma: Orevkov}
Let $a,b,c \in (\RR/\ZZ)\setminus \{0\}$ and let $[a],[b],[c] \in (0,1)$ be their respective representatives. Then $1 \in C_a C_b C_c$ if and only if $[a]+[b]+[c]\notin(1,2)$.
\end{lemma}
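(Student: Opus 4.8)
The plan is to work entirely inside $\PSL$ acting on the hyperbolic plane $\mathbb{H}$. The first step is the dictionary: for $a\in(\RR/\ZZ)\setminus\{0\}$, computing the derivative of $\sigma_a$ at its fixed point $i$ shows that $\sigma_a$ is the rotation of $\mathbb{H}$ about $i$, clockwise, through the angle $2\pi[a]$; since $\PSL$ acts transitively on $\mathbb{H}$ preserving orientation, $C_a$ is exactly the set of all elliptic isometries that are clockwise rotations through $2\pi[a]$. Hence $1\in C_aC_bC_c$ is equivalent to the existence of clockwise rotations $g_1,g_2,g_3$ of $\mathbb{H}$, through angles $2\pi[a],2\pi[b],2\pi[c]$ about points $p_1,p_2,p_3\in\mathbb{H}$, with $g_1g_2g_3=1$. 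Write $t=[a]+[b]+[c]\in(0,3)$.

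For the implication ``$t\notin(1,2)\Rightarrow 1\in C_aC_bC_c$'' I would argue by cases. If $t<1$, then $\pi[a],\pi[b],\pi[c]$ are positive angles summing to less than $\pi$, so there is a hyperbolic triangle with these interior angles; letting $r_X$ denote reflection in a side $X$, the isometries $r_{m_1}r_\ell,\ r_\ell r_{m_2},\ r_{m_2}r_{m_1}$ (where $\ell,m_1,m_2$ are the three sides in cyclic order) are rotations through twice the interior angles about the three vertices, and their product telescopes to the identity; after possibly passing to the mirror triangle these rotations are clockwise, hence lie in $C_a,C_b,C_c$. If $t\in\{1,2\}$ then $a+b+c=0$ in $\RR/\ZZ$, and since $a\mapsto\sigma_a$ is multiplicative on the stabiliser of $i$ we get $\sigma_a\sigma_b\sigma_c=\sigma_{a+b+c}=\sigma_0=1$. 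If $t>2$, apply the case $t<1$ to $(-a,-b,-c)$, whose representatives sum to $3-t<1$, and then conjugate by an orientation-reversing element of $\mathrm{PGL}_2(\RR)$, which sends each $C_x$ to $C_{-x}$; this yields $1\in C_aC_bC_c$.

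For the converse, suppose $g_1g_2g_3=1$ with $g_i\in C_{a_i}$ as above. If two of the fixed points coincide, then $g_ig_j$ fixes their common point, and since it equals the nontrivial elliptic $g_k^{-1}$ (nontrivial because $a_k\neq0$), which has a unique fixed point, all three $p_i$ coincide; then $g_1g_2g_3$ is the clockwise rotation about that point through $2\pi t$, which is trivial only if $t\in\ZZ$, forcing $t\in\{1,2\}$. Otherwise the $p_i$ are distinct; writing $g_1=r_{m_1}r_\ell$ and $g_2=r_\ell r_{m_2}$ with $\ell$ the geodesic through $p_1,p_2$, we get $g_1g_2=r_{m_1}r_{m_2}$, which is the elliptic element $g_3^{-1}$, so $m_1$ and $m_2$ meet, necessarily at $p_3$; thus $\ell,m_1,m_2$ are the three sides of the geodesic triangle on $p_1,p_2,p_3$. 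Carrying the rotation angles through the identity ``$r_Xr_Y$ is the rotation through twice the angle between $X$ and $Y$'' shows that the interior angles of this triangle are either $(\pi[a],\pi[b],\pi[c])$ or $(\pi(1-[a]),\pi(1-[b]),\pi(1-[c]))$, depending on its orientation; by Gauss–Bonnet the angle sum is less than $\pi$, so $t<1$ or $3-t<1$. In all cases $t\notin(1,2)$, which is the desired conclusion.

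The step I expect to be the main obstacle is exactly the one flagged above: proving that the three interior angles are ``all $\pi[a_i]$'' or ``all $\pi(1-[a_i])$'' and never a mixture. This coherence of the three vertex rotations produced by the side-reflections is what lets Gauss–Bonnet close the argument, and (as is easy to get wrong) it forces one to fix, consistently, several orientation conventions at once — the clockwise normalisation of the rotation angle of $\sigma_a$, the signed angle between two geodesics at their intersection point, and the orientation of the triangle relative to the cyclic order of its sides. Once these are pinned down the computation is short; and the remaining ingredients — existence of hyperbolic triangles with any prescribed angle sum below $\pi$, the reflection-telescoping identity, and the fact that an orientation-reversing isometry carries $C_a$ to $C_{-a}$ — are standard.
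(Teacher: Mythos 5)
Your proposal is correct, but it takes a different route from the paper: the paper proves Lemma \ref{Lemma: Orevkov} by a one-line citation to Corollary 3.3(b) of Orevkov, whereas you give a self-contained hyperbolic-geometry proof --- essentially the triangle correspondence that the paper only mentions informally (the remark after the lemma, referring to \S 8 of the thesis). Your forward direction (triangle with angles $\pi[a],\pi[b],\pi[c]$ when $t<1$, the homomorphism $a\mapsto\sigma_a$ when $t\in\{1,2\}$, and conjugation by an orientation-reversing isometry of $\mathrm{PGL}_2(\RR)$ when $t>2$) and your converse (common fixed point forces $t\in\ZZ$; otherwise the decomposition $g_1=r_{m_1}r_\ell$, $g_2=r_\ell r_{m_2}$ with $\ell$ through $p_1,p_2$ forces $m_1\cap m_2=\{p_3\}$, producing a nondegenerate geodesic triangle --- note your argument even rules out the collinear case, since $m_1=\ell$ would give $g_1=1$) are both sound. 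The one step you assert rather than prove, the coherence claim that the three interior angles are simultaneously $(\pi[a],\pi[b],\pi[c])$ or simultaneously $(\pi(1-[a]),\pi(1-[b]),\pi(1-[c]))$ according to the orientation of the triangle, is indeed true; it is exactly the standard fact underlying the ``product of rotations about the vertices of a triangle'' theorem, but a complete write-up does have to fix the sign conventions you list (clockwise normalisation of $\sigma_a$, signed angle between geodesics, orientation of $(p_1,p_2,p_3)$) and verify them once, e.g.\ at a single model triangle, since orientation-preserving transitivity then propagates the convention. Comparing the two approaches: the paper's citation is shorter and avoids all orientation bookkeeping, while your argument is self-contained, explains geometrically why the boundary values $t\in\{1,2\}$ are included (common fixed point, i.e.\ degenerate triangle), and matches the intuition the paper itself offers; either is acceptable as a proof of the lemma.
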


\begin{proof}
This is a reformulation of a part of Corollary 3.3 (b) of \cite{Orevkov}.
\end{proof}

In \S 8 of \cite{Thesis}, Lemma \ref{Lemma: Orevkov} was deduced from a correspondence between solutions $(x,u,z)\in C_a \times C_b \times C_c$ to $xuz=1$ and hyperbolic triangles with angles $[a]\pi$, $[b]\pi$, $[c]\pi$, where $[a], [b], [c] \in (0,1)$ are respective representatives of $a,b,c$, and the well-known fact that there exists a hyperbolic triangle with angles $[a]\pi$, $[b]\pi$, $[c]\pi$ if and only if $[a] + [b] + [c] < 1$. 

\begin{lemma}
\label{Lemma: BPSL2R}
We have $B_{\PSL} = \{(C_a,C_b,C_c) \in (\PSL^{\mathrm{Tor}}/{\sim})^3 \mid 1 \in C_aC_bC_c, \, C_a^k = C_b^l = C_c^m = \{1\}, \, a,b,c \in (\QQ/\ZZ)\setminus \{0\}\}$. Moreover, let $T = \{(a,b,c) \in ((\QQ/\ZZ)\setminus \{0\})^3 \mid a + b + c = 0\}$. Then there is a bijection $B_{\PSL} \to H \cap (S \cup - S \cup T)$ sending $(C_a,C_b,C_c)$ to $(a,b,c)$.
\end{lemma}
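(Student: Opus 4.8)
The plan is to deduce both assertions directly from the three structural results already in hand: the parametrisation $a \mapsto C_a$ of conjugacy classes of torsion elements (Lemma~\ref{Lemma: PSL TOR}), the induced bijection $\psi$ (Lemma~\ref{Lemma: bijection H, PSLk/sim}), and Orevkov's product criterion (Lemma~\ref{Lemma: Orevkov}).

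First I would establish the description of $B_{\PSL}$ by unwinding the definition. By definition a triple $(C,D,E)$ of conjugacy classes of $\PSL$ lies in $B_{\PSL}$ precisely when $1 \in CDE$, $C \subseteq \PSL[k]^0$, $D \subseteq \PSL[l]^0$ and $E \subseteq \PSL[m]^0$. Any conjugacy class contained in $\PSL[k]$ consists of elements of finite order, so by Lemma~\ref{Lemma: PSL TOR} it equals $C_a$ for a unique $a \in \QQ/\ZZ$; moreover, since all elements of $C_a$ are conjugate to $\sigma_a$, the inclusion $C_a \subseteq \PSL[k]$ is equivalent to $\sigma_a^k = 1$, hence to $C_a^k = \{1\}$, hence (again by Lemma~\ref{Lemma: PSL TOR}) to $k a = 0$; and $C_a \subseteq \PSL[k]^0$ adds exactly the condition $C_a \neq \{1\}$, i.e. $a \neq 0$. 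Applying this to each of $C, D, E$ yields the stated formula for $B_{\PSL}$.

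Next I would produce the bijection. The inverse of $\psi$ from Lemma~\ref{Lemma: bijection H, PSLk/sim} sends $(C_a,C_b,C_c)$ to $(a,b,c)$, and $B_{\PSL}$ is a subset of the codomain of $\psi$, so $\psi^{-1}$ restricts to an injection on $B_{\PSL}$; the only work is to compute its image. By the first part and Lemma~\ref{Lemma: PSL TOR}, this image is $\{(a,b,c) \in H : a,b,c \neq 0,\ 1 \in C_a C_b C_c\}$. For such a triple, let $[a],[b],[c] \in (0,1)$ be the representatives (which exist because the coordinates are nonzero) and put $\Sigma = [a]+[b]+[c] \in (0,3)$; Lemma~\ref{Lemma: Orevkov} says $1 \in C_a C_b C_c$ iff $\Sigma \notin (1,2)$, i.e. $\Sigma \in (0,1] \cup [2,3)$. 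A short case check then finishes it: if $\Sigma < 1$ then $([a],[b],[c])$ is a representative of $(a,b,c)$ inside the open simplex, so $(a,b,c) \in S$; if $\Sigma > 2$ then $(1-[a],1-[b],1-[c])$ has sum $3-\Sigma < 1$, so $(-a,-b,-c) \in S$ and $(a,b,c) \in -S$; and if $\Sigma \in \{1,2\}$ then $a+b+c \equiv \Sigma \equiv 0$ in $\RR/\ZZ$, so $(a,b,c) \in T$. Conversely, membership in $S$ (resp. $-S$, resp. $T$) forces $\Sigma$ into $(0,1)$ (resp. $(2,3)$, resp. $\{1,2\}$), using that $S$ embeds in $(\RR/\ZZ)^3$ so its elements are recognised by their $(0,1)$-representatives, and that $(0,3) \cap \ZZ = \{1,2\}$. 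Since every element of $H \cap (S \cup -S \cup T)$ automatically has nonzero coordinates — for $S$ and $-S$ because the coordinates lie in $(0,1)$, for $T$ by definition — the image is exactly $H \cap (S \cup -S \cup T)$.

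The computations are routine; the one point that genuinely needs care is matching the two excluded boundary values $\Sigma = 1$ and $\Sigma = 2$ of Orevkov's criterion with the set $T$, in particular making sure that $\Sigma = 2$ is recovered (it is, because $2 \equiv 0$ in $\RR/\ZZ$) and not silently lost. The other bookkeeping hazard is keeping the distinction between $S$ as a subset of $\RR^3$ and as a subset of $(\RR/\ZZ)^3$, and consistently using the representatives in $(0,1)$ throughout.
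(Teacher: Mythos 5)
Your proposal is correct and follows essentially the same route as the paper: unwind the definition of $B_{\PSL}$ via Lemma \ref{Lemma: PSL TOR}, then translate the condition $1 \in C_aC_bC_c$ through Lemma \ref{Lemma: Orevkov} into $[a]+[b]+[c]\notin(1,2)$ and match the cases with $S$, $-S$ and $T$. You merely spell out the case analysis (including the boundary values $\Sigma\in\{1,2\}$ landing in $T$) that the paper's proof leaves implicit.
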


\begin{proof}
Let $G = \PSL$. By Lemma \ref{Lemma: PSL TOR} we have that the group $B_G$ consists of the elements $(C_a,C_b,C_c)$ with $1 \in C_aC_bC_c, \, C_a^k = C_b^l = C_c^m = \{1\}, \, a,b,c \in (\QQ/\ZZ)\setminus \{0\}$. By Lemma \ref{Lemma: Orevkov}, these elements of $B_G$ are exactly the elements $(C_a,C_b,C_c)  \in (G^{\mathrm{Tor}}/{\sim})^3$ with $(a,b,c) \in H \cap (S \cup -S \cup T)$. Hence, the map from the lemma exists and is a bijection.
\end{proof}

\begin{proof}[Proof of Proposition \ref{Proposition: klmr-Burnside if and only if r in Mklm}]
By definition of $M_{\PSL}$ we have that $r \in M_{\PSL}$ is equivalent to $r * B_{\PSL} \subseteq B_{\PSL}$. By Lemmas \ref{Lemma: bijection H, PSLk/sim} and \ref{Lemma: BPSL2R}, this is equivalent to $r \cdot (H \cap (S \cup -S \cup T)) \subseteq H \cap (S \cup -S \cup T)$.
If $(a,b,c) \in H$, then clearly $a+b+c = 0$ if and only if $r(a+b+c) = 0$, so multiplication by $r$ is a permutation of $H \cap T$. So $r \cdot (H \cap (S \cup -S \cup T)) \subseteq H \cap (S \cup -S \cup T)$ is equivalent to $r(H \cap (S \cup -S)) \subseteq H \cap(S \cup -S)$, which is equivalent to $r \in M$ since multiplication by $r$ on $H\cap(S \cup -S)$ is injective and hence bijective.
\end{proof}

We will now prove some lemmas so we can find $M_{\PSL}$ in Proposition \ref{Proposition: If r in Mklm, then r in pm 1}.
Recall that we defined $S = \{(x,y,z) \in \RR^3 : x>0$, $y>0$, $z>0$, $x+y+z<1\}$, and that we view it as a subset of $(\RR/\ZZ)^3$.

\begin{lemma}
\label{lemma: ceiling}
Let $r \in (\ZZ/\lcm(k,l,m)\ZZ)^*$ and let $a,b$ be integers such that $0<a<k$, $0<b<l$, $\frac{a}{k} + \frac{b}{l}<1$. Then $\# \{z \in \frac{1}{m}\ZZ/\ZZ \mid (\frac{a}{k},\frac{b}{l},z) \in S\} = \lceil m - \frac{am}{k} - \frac{bm}{k} \rceil - 1.$
\end{lemma}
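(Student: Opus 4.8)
The plan is to count the elements $z \in \frac{1}{m}\ZZ/\ZZ$ for which $(\frac{a}{k},\frac{b}{l},z) \in S$, recalling that $S$ is viewed as a subset of $(\RR/\ZZ)^3$ and consists of the classes of triples $(x,y,z) \in \RR^3$ with $x,y,z > 0$ and $x+y+z < 1$. Fix the representatives $\frac{a}{k} \in (0,1)$ and $\frac{b}{l} \in (0,1)$, which is legitimate since $0 < a < k$ and $0 < b < l$. A class $z \in \frac{1}{m}\ZZ/\ZZ$ can be written uniquely as $\frac{c}{m}$ with $c \in \{0,1,\dots,m-1\}$; I would first observe that $c = 0$ is excluded, because then $z = 0$ and the condition "$z > 0$ for some representative" together with "$x+y+z<1$ for that same representative" cannot both hold (the only representative of $0$ in $(0,1)$ does not exist; concretely one checks no lift works). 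For $c \in \{1,\dots,m-1\}$ the representative $\frac{c}{m} \in (0,1)$ is forced if we want $z > 0$ together with $x+y+z$ small, so membership in $S$ is equivalent to $\frac{a}{k} + \frac{b}{l} + \frac{c}{m} < 1$.

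Next I would solve this inequality for $c$: it reads $c < m - \frac{am}{k} - \frac{bm}{l}$, i.e. $c < m(1 - \frac{a}{k} - \frac{b}{l})$, and the right-hand side is positive by the hypothesis $\frac{a}{k} + \frac{b}{l} < 1$. So the valid values of $c$ are exactly the integers with $1 \le c < m - \frac{am}{k} - \frac{bm}{l}$. The number of such integers is $\lceil m - \frac{am}{k} - \frac{bm}{l} \rceil - 1$: indeed, the number of integers $c$ with $1 \le c < \alpha$ for a real $\alpha > 0$ is $\lceil \alpha \rceil - 1$ (one verifies this by splitting into the cases $\alpha \in \ZZ$ and $\alpha \notin \ZZ$). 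This gives the claimed count, and matches the statement — modulo the typo in the statement, where the second term in the ceiling should read $\frac{bm}{l}$ rather than $\frac{bm}{k}$, consistent with the rest of the derivation.

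The only genuinely delicate point, and the one I would treat most carefully, is the reduction from "$(\frac{a}{k},\frac{b}{l},z) \in S$ in $(\RR/\ZZ)^3$" to "the specific lift $\frac{c}{m} \in (0,1)$ satisfies the strict inequality". One must check that no other lift of $z$ (namely $\frac{c}{m} + n$ for $n \in \ZZ$, or equivalently $\frac{c}{m} - 1$ when $c \ne 0$) can place the triple in $S$: a negative lift violates $z > 0$, and a lift $\ge 1$ violates $x+y+z < 1$ since $x,y > 0$. Likewise the first two coordinates are genuinely pinned down to their chosen representatives by the same positivity-plus-bounded-sum constraint. Once this lift-uniqueness is nailed down the rest is the elementary integer-counting identity above, so I expect no serious obstacle beyond book-keeping.
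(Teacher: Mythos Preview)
Your proposal is correct and follows essentially the same approach as the paper: both arguments pin down the unique representative of $z$ in $(0,1)$, translate membership in $S$ into the inequality $0 < c < m - \frac{am}{k} - \frac{bm}{l}$, and count the integers in this range via the ceiling identity. Your treatment of lift-uniqueness is more explicit than the paper's, and you correctly spot the typo in the statement (the second term should be $\frac{bm}{l}$, as it indeed is in the paper's subsequent uses of the lemma).
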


\begin{proof}
Given $z \in \frac{1}{m}\ZZ/\ZZ$ such that $(\frac{a}{k},\frac{b}{l},z) \in S$, its representative $[z] \in (0,1)$ satisfies $0 < [z] < 1- \frac{a}{k} - \frac{b}{l}$, and there is a unique integer $0 < c < m - \frac{am}{k} - \frac{bm}{k}$ with $[z] = \frac{c}{m}$. Given such $c$, the class $z$ of $\frac{c}{m}$ in $\frac{1}{m}\ZZ/\ZZ$ satisfies $(\frac{a}{k},\frac{b}{l},z) \in S$. Thus $\# \{z \in \frac{1}{m}\ZZ/\ZZ \mid (\frac{a}{k},\frac{b}{l},z) \in S\}$ is equal to the number of integers strictly between $0$ and $m - \frac{am}{k} - \frac{bm}{k}$, which is $\lceil m - \frac{am}{k} - \frac{bm}{k} \rceil - 1.$
\end{proof}

Recall that $M = \{r \in (\ZZ/\lcm(k,l,m)\ZZ)^*: r \cdot (H \cap ( S \cup -S)) = H \cap(S \cup -S)\}$.

\begin{lemma}
\label{Proposition r equiv 1 mod lcm(k,m)}
Let $r \in M$ be such that for all elements in $H \cap S$ of the form $(\frac{1}{k}, \frac{1}{l}, z)$ we have $(\frac{r}{k}, \frac{r}{l}, rz) \in H \cap S$. 
Write $r_k$ respectively $r_l$ for the smallest positive representative of $r$ modulo $k$ respectively $l$.
Then $\left\lceil m - \frac{m}{k} - \frac{m}{l} \right\rceil = \left\lceil m - \frac{r_k m}{k} - \frac{r_l m}{l} \right\rceil$. Moreover, if $m \geq \max\{k,l\}$, then $r \equiv 1 \pmod{\lcm(k,l)}$.
\end{lemma}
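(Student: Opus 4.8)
The plan is to count elements of $H \cap S$ with prescribed first two coordinates and to compare the count before and after multiplication by $r$, exploiting that $r$ carries $(\frac{1}{k},\frac{1}{l},z)$ to $(\frac{r_k}{k},\frac{r_l}{l},rz)$ — indeed $\frac{r}{k} = \frac{r_k}{k}$ in $\frac{1}{k}\ZZ/\ZZ$, and $r_k, r_l \geq 1$ since $r$ is a unit modulo $k$ and modulo $l$. Set $N := \#\{z \in \frac{1}{m}\ZZ/\ZZ : (\frac{1}{k},\frac{1}{l},z) \in H \cap S\}$ and $N' := \#\{w \in \frac{1}{m}\ZZ/\ZZ : (\frac{r_k}{k},\frac{r_l}{l},w) \in H \cap S\}$. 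I would first dispose of the case $\frac{1}{k}+\frac{1}{l} \geq 1$: then $k = l = 2$, the only unit modulo $2$ is $1$, so $r_k = r_l = 1$ and both assertions are trivial. So assume $\frac{1}{k}+\frac{1}{l} < 1$; then Lemma \ref{lemma: ceiling} gives $N = \lceil m - \frac{m}{k} - \frac{m}{l}\rceil - 1$. I will moreover use that $N \geq 1$ — equivalently $\frac{1}{k}+\frac{1}{l}+\frac{1}{m} < 1$ — which holds in the applications of the lemma.

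The heart of the argument is the equality $N = N'$. The inequality $N \leq N'$ follows because multiplication by $r$ is injective on $\frac{1}{m}\ZZ/\ZZ$ (as $r$ is a unit modulo $m$) and, by the hypothesis on $r$, maps the set counted by $N$ into the set counted by $N'$. For $N' \leq N$, note that if $w \in \frac{1}{m}\ZZ/\ZZ$ satisfies $(\frac{r_k}{k},\frac{r_l}{l},w) \in S$, then its representative $[w] \in (0,1)$ obeys $[w] < 1 - \frac{r_k}{k} - \frac{r_l}{l} \leq 1 - \frac{1}{k} - \frac{1}{l}$, so $(\frac{1}{k},\frac{1}{l},w) \in H \cap S$; hence the set counted by $N'$ is contained in the set counted by $N$ (it is even empty when $\frac{r_k}{k}+\frac{r_l}{l} \geq 1$). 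Therefore $N = N'$.

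From this I would extract both conclusions. Since $N' = N \geq 1$, the set counted by $N'$ is non-empty, which forces $\frac{r_k}{k}+\frac{r_l}{l} < 1$; then Lemma \ref{lemma: ceiling} applies again to give $N' = \lceil m - \frac{r_k m}{k} - \frac{r_l m}{l}\rceil - 1$, and comparing with $N = \lceil m - \frac{m}{k} - \frac{m}{l}\rceil - 1$ yields the ceiling identity $\lceil m - \frac{m}{k} - \frac{m}{l}\rceil = \lceil m - \frac{r_k m}{k} - \frac{r_l m}{l}\rceil$. For the divisibility statement, put $\delta := \frac{(r_k-1)m}{k} + \frac{(r_l-1)m}{l} \geq 0$, so that $m - \frac{r_k m}{k} - \frac{r_l m}{l} = x - \delta$ with $x := m - \frac{m}{k} - \frac{m}{l}$. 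The ceiling identity reads $\lceil x - \delta\rceil = \lceil x\rceil$, and since $\delta \geq 0$ always forces $\lceil x-\delta\rceil \leq \lceil x\rceil$, equality is possible only when $\delta < x - \lceil x\rceil + 1 \leq 1$. If $m \geq k$ and $r_k \geq 2$, then $\frac{(r_k-1)m}{k} \geq \frac{m}{k} \geq 1$, contradicting $\delta < 1$; so $r_k = 1$, and by symmetry $r_l = 1$ whenever $m \geq l$. Hence $m \geq \max\{k,l\}$ gives $r \equiv 1 \pmod{k}$ and $r \equiv 1 \pmod{l}$, i.e., $r \equiv 1 \pmod{\lcm(k,l)}$.

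The step I expect to be the main obstacle is the bookkeeping around the boundary configurations: one must make sure that both $N$ and $N'$ are genuinely computed by Lemma \ref{lemma: ceiling} (which requires the first two coordinates to sum to strictly less than $1$) and that the set counted by $N$ is non-empty, so that $N = N'$ really pins down the two ceilings and not merely their truncations $\max(0, \lceil\cdot\rceil - 1)$. Once $N = N' \geq 1$ is in hand, the rest is a routine computation with ceilings.
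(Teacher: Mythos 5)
Your argument is essentially the paper's: the same two counts of $z \in \frac{1}{m}\ZZ/\ZZ$, compared via injectivity of multiplication by $r$ and converted to ceilings by Lemma \ref{lemma: ceiling}, followed by the same estimate that $\frac{(r_k-1)m}{k}+\frac{(r_l-1)m}{l}<1$ forces $r_k=r_l=1$; the only genuine differences are that you obtain $N'\le N$ by an explicit containment of sets where the paper simply asserts the corresponding ceiling inequality, and that you spell out the boundary cases. Your standing assumption $N\ge 1$ (equivalently $\frac{1}{k}+\frac{1}{l}+\frac{1}{m}<1$) is not among the stated hypotheses, but it is exactly what is needed: without it the ceiling identity can fail, e.g.\ for $(k,l,m)=(3,3,2)$ and $r=-1$ the hypothesis on $r$ is vacuous since $H\cap S$ contains no point of the form $(\frac13,\frac13,z)$, yet $\lceil 2-\frac23-\frac23\rceil = 1 \ne 0 = \lceil 2-\frac43-\frac43\rceil$; the paper's own proof quietly makes the same assumption when it applies Lemma \ref{lemma: ceiling} to the second count without verifying $\frac{r_k}{k}+\frac{r_l}{l}<1$ (and to the first without excluding $k=l=2$). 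Since the lemma is only used in the proof of Proposition \ref{Proposition: If r in Mklm, then r in pm 1} under $\frac{1}{k}+\frac{1}{l}+\frac{1}{m}<1$ with $m\ge\max\{k,l\}$, your restricted version suffices there, so this is a defect of the statement rather than a gap in your proof.
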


\begin{proof}
Clearly, $\left\lceil m - \frac{m}{k} - \frac{m}{l} \right\rceil \geq \left\lceil m - \frac{r_k m}{k} - \frac{r_l m}{l} \right\rceil$.
As for all elements in $H \cap S$ of the form $(\frac{1}{k}, \frac{1}{l}, z)$ we have $(\frac{r}{k}, \frac{r}{l}, rz) \in H \cap S$, the map $\left\{z \in \frac{1}{m}\ZZ/\ZZ \mid (\frac{1}{k}, \frac{1}{l}, z) \in S\right\} \to \left\{z \in \frac{1}{m}\ZZ/\ZZ \mid (\frac{r}{k}, \frac{r}{l}, z) \in S\right\}, \, z \mapsto rz$ is well-defined. 
Since multiplication by $r$ is a bijection on $H$, this map is injective, so
$$\#\left\{z \in \frac{1}{m}\ZZ/\ZZ \mid (\frac{1}{k}, \frac{1}{l}, z) \in S\right\} \leq \#\left\{z \in \frac{1}{m}\ZZ/\ZZ \mid (\frac{r}{k}, \frac{r}{l}, z) \in S\right\}.$$
Lemma \ref{lemma: ceiling} then gives 
$\lceil m - \frac{m}{k} - \frac{m}{l}\rceil \leq \lceil m - \frac{r_k m}{k} - \frac{r_l m}{l} \rceil$.

Now suppose $m \geq \max\{k,l\}$.
If $r_k = r_l = 1$ we are done, so suppose $r_k,r_l$ are not both $1$. Without loss of generality we assume $r_k \geq 2$. Let $t = m-\frac{m}{k} - \frac{m}{l}$, $u = m-\frac{r_km}{k} - \frac{r_lm}{l}$.
Then $\lceil t \rceil = \lceil u \rceil$, so
$$1 > t-u = m\left(1-\frac{1}{k} - \frac{1}{l}\right) - m\left(1-\frac{r_k}{k} - \frac{r_l}{l}\right) = m\left(\frac{r_k-1}{k} + \frac{r_l - 1}{l}\right) \geq \frac{m}{k} \geq 1.$$
Contradiction. So $r_k = r_l = 1$.
\end{proof}

In the lemma below, given two sets of integers $A,B$, we write $A \equiv B \pmod{m}$ if for all $a \in A$, there is some $b \in B$ such that $a \equiv b \pmod{m}$, and vice versa.

\begin{lemma}
\label{Lemma: 1<c<m-2}
Suppose $m \geq 3$, let $r \in (\ZZ/m\ZZ)^*$ and let $c$ be an integer such that $1 \leq c \leq m-2$. If $\{1,2,...,c\} \equiv \{r,2r,...,cr\}\pmod{m}$, then $r = 1$.
\end{lemma}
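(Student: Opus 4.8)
The plan is to read the hypothesis as the single statement that multiplication by $r$ permutes the set $A := \{1,2,\dots,c\}$, regarded as a subset of $\ZZ/m\ZZ$. This is legitimate: the residues $1,\dots,c$ are pairwise distinct modulo $m$ because $c \le m-2 < m$, and multiplication by the unit $r$ is a bijection of $\ZZ/m\ZZ$, so ``$\{r,2r,\dots,cr\} \equiv \{1,\dots,c\} \pmod m$'' is exactly the assertion $rA = A$. Write $\bar r$ for the representative of $r$ in $\{1,\dots,m-1\}$. Since $1 \in A$ must be sent into $A$, we get $\bar r \le c$ immediately, and the whole game is to improve this to $\bar r = 1$.

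The first step is a reduction to the case $2c < m$. If $2c \ge m$, I would run the argument instead with $c$ replaced by $c' := m-1-c$. One checks that $1 \le c' \le m-2$ and that $2c' = 2m-2-2c \le m-2 < m$, and the hypothesis carries over: multiplication by $r$ fixes $0$ and permutes $A$, so it permutes $\{c+1,\dots,m-1\}$ (the complement of $A \cup \{0\}$), hence also permutes its negative $-\{c+1,\dots,m-1\} = \{1,2,\dots,m-1-c\} = \{1,\dots,c'\}$. Since we are proving a statement about the same $r$, it therefore suffices to treat the case $2c < m$.

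Assuming $2c < m$, the core observation is that the sequence $\bar r, 2\bar r, \dots, c\bar r$ never ``wraps around'' a multiple of $m$, so that reduction mod $m$ has no effect on it. I would prove by induction on $a$ that $a\bar r \le c$ for every $a \in \{1,\dots,c\}$: the base case $a=1$ is the inequality $\bar r \le c$ noted above, and if $a\bar r \le c$ with $a < c$, then $(a+1)\bar r = a\bar r + \bar r \le c + c = 2c < m$, so $(a+1)r \bmod m$ equals the integer $(a+1)\bar r$; but $a+1 \in A$, so $(a+1)r \bmod m \in A$, forcing $(a+1)\bar r \le c$. Applying this with $a = c$ gives $c\bar r \le c$, hence $\bar r = 1$, i.e. $r \equiv 1 \pmod m$, as required.

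I expect the only genuine difficulty to be spotting the reduction $c \mapsto m-1-c$; once $2c < m$ may be assumed, the no-wraparound induction is forced, since a wraparound at some step would require a running value that is at most $c$, plus an increment $\bar r \le c$, to reach at least $m$, contradicting $c + c = 2c < m$. The boundary cases $c = 1$ and $2c = m-1$ need no special treatment — both are covered verbatim — so no further case analysis should be necessary.
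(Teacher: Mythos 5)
Your proof is correct and takes essentially the same route as the paper: the identical reduction to $c' = m-1-c < m/2$ (permute the complement $\{c+1,\dots,m-1\}$ and negate), followed by the observation that $2c' < m$ forbids wraparound. The only cosmetic difference is the finish: you run a direct induction showing $a\bar r \le c$ for all $a \le c$, so $c\bar r \le c$ forces $\bar r = 1$, whereas the paper assumes $r \neq 1$ and derives a contradiction by exhibiting the multiple $(d+1)r$, $d = \lfloor c'/r \rfloor$, lying strictly between $c'$ and $m$.
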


\begin{proof}
Suppose $\{1,...,c\} \equiv \{r,2r,...,cr\}\pmod{m}$. Note that $c<\frac{m}{2}$ or $m-(c+1) < \frac{m}{2}$. Choose $c' \in \{c,m-(c+1)\}$ so that $c' < \frac{m}{2}$. Claim: $\{1,2,...,c'\} \equiv \{r,2r,...,c'r\}\pmod{m}$. Proof claim: we already know this is true for $c' = c$, so it remains to be shown that $\{1,2,...,m-(c+1)\} \equiv \{r,2r,...,(m-(c+1))r\}\pmod{m}$. Since $\{1,...,c\} \equiv \{r,2r,...,cr\}\pmod{m}$, we also have $\{c+1,...,m-1\} \equiv \{(c+1)r,...,(m-1)r\} \pmod{m}$, thus
\begin{align*}
    \{1,2,...,m-(c+1)\} &\equiv \{-(m-1),-(m-2),...,-(c+1)\} \\
    &\equiv \{-(m-1)r,-(m-2)r,...,-(c+1)r\} \\
    &\equiv \{r,2r,...,(m-(c+1))r\}\pmod{m},
\end{align*}
proving the claim.
Suppose $r \neq 1$ and assume without loss of generality that $r$ is an integer and $0 < r < m$. Define $d:= \lfloor \frac{c'}{r}\rfloor$. 
We will get a contradiction with $\{1,2,...,c'\} = \{r,2r,...,c'r\}$ by showing that $d+1 \leq c'$ and $c'<(d+1)r < m$. Since $r \neq 1$ we have $d=\lfloor \frac{c'}{r}\rfloor\leq\frac{c'}{2}<c'$, so $d+1 \leq c'$. 
Since $d > \frac{c'}{r}-1$, we also have $dr>(\frac{c'}{r}-1)r = c'-r$, so $(d+1)r = dr+r >c'$. 
Finally, since $\{1,2,...,c'\} = \{r,2r,...,c'r\}$, we have $r \leq c'$, so $(d+1)r = \lfloor \frac{c'}{r}\rfloor r +r \leq 2c' < m$. 
\end{proof}

\begin{proof}[Proof of Proposition \ref{Proposition: If r in Mklm, then r in pm 1}]
Proposition \ref{Proposition: klmr-Burnside if and only if r in Mklm} gives that $M = M_{\PSL}$, so it suffices to show that if $\frac{1}{k}+\frac{1}{l}+\frac{1}{m}<1$, then $M = \{\pm1\}$.
By Lemma \ref{Lemma: multiplier general} and Proposition \ref{Proposition: klmr-Burnside if and only if r in Mklm} we have that $\{\pm 1 \} \subseteq M$.

Suppose $\frac{1}{k}+\frac{1}{l}+\frac{1}{m}<1$. Clearly $M$ does not change if we permute $k,l,m$, so we may assume without loss of generality that $m \geq \max\{k,l\}$. Then $m \geq 4$. Let $r \in M$.  Claim: at least one of the sets $T:=\{(\frac{r}{k}, \frac{r}{l}, z): z \in \frac{1}{m}\ZZ/\ZZ\} \cap S$ and $T^*:=\{(\frac{r}{k}, \frac{r}{l}, z): z \in \frac{1}{m}\ZZ/\ZZ\} \cap -S$ is empty. 
Proof claim: define $\pi: (\RR/\ZZ)^3 \to (\RR/\ZZ)^2, (x,y,z) \mapsto (x,y)$. Notice that $\pi(T) \cap \pi(T^*) \subseteq \pi(S) \cap \pi(-S) = \emptyset$, so $(\frac{r}{k}, \frac{r}{l}) \notin \pi(T) \cap \pi(T^*)$, proving the claim.

So since $r \in M$, either for all elements in $H \cap S$ of the form $(\frac{1}{k}, \frac{1}{l}, z)$ we have $(\frac{r}{k}, \frac{r}{l}, rz) \in H \cap S$ or for all such elements we have $(\frac{r}{k}, \frac{r}{l}, rz) \in -(H \cap S)$. Choose $r' \in \{r,-r\}$ so that for all elements in $H \cap S$ of the form $(\frac{1}{k}, \frac{1}{l}, z)$ we have $(\frac{r'}{k}, \frac{r'}{l}, r'z) \in H \cap S$. Note that $r' \in M$ by Lemma \ref{Lemma: multiplier general} and Proposition \ref{Proposition: klmr-Burnside if and only if r in Mklm}. Now Lemma \ref{Proposition r equiv 1 mod lcm(k,m)} gives that $r' \equiv 1 \pmod{\lcm(k,l)}$. 

If $(\frac{1}{k}, \frac{1}{l}, z) \in H \cap S$, then $(\frac{1}{k}, \frac{1}{l}, r'z) = (\frac{r'}{k}, \frac{r'}{l}, r'z) \in H \cap S$. This, combined with the fact that multiplication by $r'$ is a bijection on $\frac{1}{m}\ZZ/\ZZ$, gives that multiplication by $r'$ permutes the set of elements in $H \cap S$ of the form $(\frac{1}{k},\frac{1}{l},z)$. 
Thus $\left\{z \in \frac{1}{m}\ZZ/\ZZ \mid (\frac{1}{k}, \frac{1}{l}, (r')^{-1}z) \in S \right\}
= \left\{z \in \frac{1}{m}\ZZ/\ZZ \mid (\frac{1}{k}, \frac{1}{l}, z) \in S \right\}$. Let $c := \lceil m(1-\frac{1}{k} - \frac{1}{l}) \rceil - 1$ and let $q:(\ZZ/\lcm(k,l,m)\ZZ)^* \to (\ZZ/m\ZZ)^*$ be the homomorphism that sends $x$ to $x \pmod{m}$. We want to apply Lemma \ref{Lemma: 1<c<m-2} to $m, \, q(r')$ and $c$. Lemma \ref{lemma: ceiling} gives
\begin{align*}
\left\{\frac{1}{m}, \frac{2}{m},..., \frac{c}{m}\right\}
&= \left\{z \in \frac{1}{m}\ZZ/\ZZ \mid (\frac{1}{k}, \frac{1}{l}, z) \in S \right\} \\
&= \left\{z \in \frac{1}{m}\ZZ/\ZZ \mid (\frac{1}{k}, \frac{1}{l}, (r')^{-1}z) \in S \right\} \\
&= \left\{\frac{r'}{m}, \frac{2r'}{m},..., \frac{c r'}{m}\right\}.
\end{align*}
Since $\frac{1}{k} + \frac{1}{l} + \frac{1}{m} < 1$ we have $\frac{1}{m} < 1 - \frac{1}{k} - \frac{1}{l}$, so $1 < m-\frac{m}{k} - \frac{m}{l}$. Thus $2 \leq \lceil m-\frac{m}{k} - \frac{m}{l} \rceil$ and hence $1 \leq c$. Since $m \geq \max\{k,l\}$, we have $\frac{1}{k} + \frac{1}{l} > \frac{1}{m}$, so $1 - \frac{1}{k} - \frac{1}{l} < 1 - \frac{1}{m} = \frac{m-1}{m}$. Therefore $c < m(1 - \frac{1}{k} - \frac{1}{l}) < m-1$. For all integers $d$ we have $\frac{dr'}{m} = \frac{dq(r')}{m}$, so we may apply Lemma \ref{Lemma: 1<c<m-2} to $m, c$ and $q(r')$. This gives us $q(r') \equiv 1 \pmod{m}$, so $r' \equiv 1 \pmod{m}$, and thus $r \in \{\pm r'\} = \{\pm 1\}$.
\end{proof}

\begin{proof}[Proof of the implication (2) $\Rightarrow$ (3) of Theorem \ref{introduction theorem qB}]
Suppose $\frac{1}{k}+\frac{1}{l}+\frac{1}{m}<1$ and $r \notin \{\pm 1\}$. Then $r \notin M_{\PSL}$ by Proposition \ref{Proposition: If r in Mklm, then r in pm 1}, so $\mathrm{PSL}_2(\RR)$ is not $(k,l,m,r)$-quasi-Burnside by Lemma \ref{Lemma: multiplier general}.
\end{proof}

The proposition below will be used in \S \ref{Section: universal groups}.

\begin{proposition}
\label{Proposition: PSL not hemi-Burnside}
Suppose $k = l$ and let $r \in (\ZZ/\lcm(k,m)\ZZ)^*$. If $\frac{2}{k}+\frac{1}{m} \leq 1$, then there are $x,y,z \in \PSL$ with $xy = z$, $x^k = y^k = z^m = 1$ such that there do not exist $x',x'',z' \in \PSL$ with $x'x''= z'$, $x' \sim x^{-r}$, $x'' \sim x^r$, $z' \sim z^r$.
\end{proposition}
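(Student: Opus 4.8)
The plan is to build $x,y,z$ out of conjugates of the rotations $\sigma_a$ and to extract everything from Lemma \ref{Lemma: Orevkov}. First I would invoke the hypothesis $\frac{2}{k}+\frac{1}{m}\le 1$. Since $k,m\ge 2$, the elements $\frac1k,\frac1k,\frac1m$ of $\QQ/\ZZ$ are nonzero and their representatives in $(0,1)$ add up to $\frac{2}{k}+\frac{1}{m}\le 1$, which is not in $(1,2)$; so Lemma \ref{Lemma: Orevkov} yields $1\in C_{1/k}C_{1/k}C_{1/m}$. Choosing $x,y,w$ in these three classes with $xyw=1$ and putting $z:=w^{-1}\in C_{-1/m}$, I obtain $xy=z$, and since $\sigma_{1/k}$ has order exactly $k$ and $\sigma_{-1/m}$ order exactly $m$ (Lemma \ref{Lemma: PSL TOR}), also $x^k=y^k=z^m=1$.

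Next I would assume, for contradiction, that there are $x',x'',z'\in\PSL$ with $x'x''=z'$, $x'\sim x^{-r}$, $x''\sim x^{r}$, $z'\sim z^{r}$. Fixing an integer representative of $r$ and using $\sigma_a^{n}=\sigma_{na}$ (immediate from the definition of $\sigma_a$, cf. Lemma \ref{Lemma: PSL TOR}), the membership $x\in C_{1/k}$ gives $x^{-r}\in C_{-r/k}$ and $x^{r}\in C_{r/k}$, while $z\in C_{-1/m}$ gives $z^{r}\in C_{-r/m}$; hence $x'\in C_{-r/k}$, $x''\in C_{r/k}$, $z'\in C_{-r/m}$. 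Rewriting $x'x''=z'$ as $x'x''(z')^{-1}=1$ and using that the inverse of $C_{-r/m}$ is $C_{r/m}$, this says $1\in C_{-r/k}\,C_{r/k}\,C_{r/m}$.

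Finally I would contradict this with Lemma \ref{Lemma: Orevkov} again. Because $r$ is coprime to $k$ and to $m$ and $k,m\ge 2$, none of $-r/k$, $r/k$, $r/m$ is zero in $\RR/\ZZ$; if $t\in(0,1)$ is the representative of $r/k$, then $1-t$ is the representative of $-r/k$, so the representatives of $-r/k$, $r/k$, $r/m$ sum to $t+(1-t)+s=1+s$, where $s\in(0,1)$ is the representative of $r/m$. Thus this sum lies in $(1,2)$, and Lemma \ref{Lemma: Orevkov} gives $1\notin C_{-r/k}C_{r/k}C_{r/m}$ — the desired contradiction. I do not expect a genuine obstacle here; the only points to watch are the boundary case $\frac{2}{k}+\frac{1}{m}=1$ (where the first sum is still $\le 1$, hence outside $(1,2)$) and the sign bookkeeping in passing between an element of $\QQ/\ZZ$ and its negative when applying Lemma \ref{Lemma: Orevkov}. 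Note also that the argument never uses $r\notin\{\pm1\}$, so the conclusion in fact holds for every $r\in(\ZZ/\lcm(k,m)\ZZ)^*$.
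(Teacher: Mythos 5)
Your proof is correct, and at its core it rests on exactly the same computation as the paper's: Orevkov's criterion (Lemma \ref{Lemma: Orevkov}) together with the cancellation $[-ra]+[ra]=1$, which forces the relevant sum of representatives into $(1,2)$. The packaging, however, is different. The paper proves the proposition through its class-theoretic formalism: it introduces the set $N_{\PSL}$ analogous to $M_{\PSL}$, invokes the analogues of Lemma \ref{Lemma: multiplier general} and the bijections of Lemmas \ref{Lemma: bijection H, PSLk/sim} and \ref{Lemma: BPSL2R}, and shows that $r \in N_{\PSL}$ would force $H \cap (S \cup -S \cup T)=\emptyset$, which fails because $(\frac1k,\frac1k,\frac1m)$ lies in that set when $\frac2k+\frac1m\le 1$. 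You instead produce the witness triple directly: you realize $1\in C_{1/k}C_{1/k}C_{1/m}$ from Lemma \ref{Lemma: Orevkov}, take $x,y\in C_{1/k}$, $z\in C_{-1/m}$ with $xy=z$, and derive the contradiction $1\in C_{-r/k}C_{r/k}C_{r/m}$ with sum of representatives $1+s\in(1,2)$. Your route is more elementary and self-contained (it needs only Lemmas \ref{Lemma: PSL TOR} and \ref{Lemma: Orevkov}, not the $B_G$/$N_G$ machinery), while the paper's route yields the slightly stronger structural fact that \emph{every} admissible triple $(x,y,z)$ fails (i.e.\ it characterizes $N_{\PSL}$ completely) and reuses notation already set up for Proposition \ref{Proposition: klmr-Burnside if and only if r in Mklm}. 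Your closing observation that the argument never uses $r\notin\{\pm1\}$ is also consistent with the paper: the proposition indeed holds for all $r\in(\ZZ/\lcm(k,m)\ZZ)^*$.
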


\begin{proof}
Given a group $G$, recall that
$B_G:= \{(C,D,E) \in (G/{\sim})^3: 1 \in CDE, \, C \subseteq G[k]^0, \, D \subseteq G[k]^0, \, E \subseteq G[m]^0\}$.
Analogous to the definition of $M_G$ in \S \ref{Section: Quasi-Burnside and quasi-Honda groups}, we define 
$$N_G:= \{r \in (\ZZ/\lcm(k,m)\ZZ)^*\mid \forall (C,D,E) \in B_G: (C^{-r},C^r,E^r) \in B_G\}.$$ 
Given a group $G$, one can show analogous to Lemma \ref{Lemma: multiplier general}, that $r \in N_G$ if and only if for all $x,y,z \in G$ we have
$$xy = z, \, x^k = y^k = z^m = 1 \Rightarrow \exists x',x'',z' \in G : x'x''= z', \, x' \sim x^{-r}, \, x'' \sim x^r, \, z' \sim z^r.$$
So it suffices to show that if $\frac{2}{k}+\frac{1}{m} \leq 1$, then $r \notin N_{\PSL}$.

Note that since $k = l$, we have $H = (\frac{1}{k}\ZZ/\ZZ) \oplus (\frac{1}{k}\ZZ/\ZZ) \oplus (\frac{1}{m}\ZZ/\ZZ)$. By definition of $N_{\PSL}$ we have that $r \in N_{\PSL}$ if and only if for all $(C_a,C_b,C_c) \in B_{\PSL}$ we have $(C_a^{-r},C_a^r,C_c^r) \in B_{\PSL}$. Since $(C_a^{-r},C_a^r,C_c^r) = (C_{-ra},C_{ra},C_{rc})$, Lemmas \ref{Lemma: bijection H, PSLk/sim} and \ref{Lemma: BPSL2R} give that this is equivalent to the statement that for all $(a,b,c) \in H \cap (S \cup -S \cup T)$ we have $(-ra,ra,rc) \in H \cap(S \cup -S \cup T)$.
Given $s \in \QQ/\ZZ$, we write $[s]$ for the smallest non-negative representative of $s$ modulo $1$. 
Note that given $(a,b,c) \in H \cap(S \cup -S \cup T)$ we have
\begin{align*}
    (-ra,ra,rc) \in H \cap(S \cup -S \cup T) &\Leftrightarrow [-ra] + [ra] + [rc] \leq 1 \text{ or } [-ra] + [ra] + [rc] \geq 2 \\
    &\Leftrightarrow 1-[ra] + [ra] +[rc] \leq 1 \text{ or } 1-[ra] +[ra]+[rc] \geq 2 \\
    &\Leftrightarrow [rc] \leq 0 \text{ or } [rc] \geq 1 \Leftrightarrow rc = 0 \Leftrightarrow c = 0,
\end{align*}
but such $c$ is never equal to 0. Thus $r \in N_{\PSL}$ if and only if $H \cap (S \cup -S \cup T) = \emptyset$.
If $\frac{2}{k}+\frac{1}{m} \leq 1$, then $(\frac{1}{k},\frac{1}{k},\frac{1}{m}) \in H \cap (S \cup -S \cup T)$, so then $r \notin N_{\PSL}$.
\end{proof}

\begin{proposition}
\label{Proposition: PSL2R Honda and quasi-Honda}
The group $\mathrm{PSL}_2(\mathbb{R})$ is Honda and quasi-Honda.
\end{proposition}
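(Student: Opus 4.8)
The plan is to treat the two properties in parallel; each reduces, after dispatching the cases where the relevant elements have infinite order, to a fact about torsion elements of $\PSL$. I freely use the description (from the proof of Lemma~\ref{Lemma: PSL TOR}) of torsion elements of $\PSL$: they are the classes of $\sigma_a$, $a\in\QQ/\ZZ$, and a finite-order element of $\mathrm{SL}_2(\RR)$ is elliptic, with trace of absolute value $<2$ unless it is $\pm\mathrm{I}$.

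\emph{Quasi-Honda.} Let $x,u,z\in\PSL$ and an integer $r$ satisfy $[x,u]=z$, $\langle x\rangle=\langle x^r\rangle$, $\langle z\rangle=\langle z^r\rangle$. If $\mathrm{ord}(x)=\infty$ or $\mathrm{ord}(z)=\infty$, then Proposition~\ref{Proposition: x or z infinite order quasi-Honda} already supplies $w$ with $[x^r,w]=z^r$, so I may assume $x,z$ have finite order. The key claim is that a commutator $[x,u]$ with $x$ of finite order in $\PSL$ is trivial or of infinite order; granting it, $z$ must be trivial, and then $z^r=1=[x^r,1]$, so $w=1$ works. To prove the claim, lift $x,u$ to $X,U\in\mathrm{SL}_2(\RR)$; the commutator $[X,U]$ is independent of the choice of lifts. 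If $X=\pm\mathrm{I}$ then $x=1$ and $z=1$. Otherwise $X$ is elliptic with $\lvert\mathrm{tr}X\rvert<2$, and after conjugating the pair $(X,U)$ I may assume $X\in\mathrm{SO}(2)$. Writing $U=\begin{pmatrix}a&b\\c&d\end{pmatrix}$ and eliminating variables using $\det U=1$, $\mathrm{tr}U=s$, $\mathrm{tr}XU=v$, the existence of a real solution $(a,b,c,d)$ forces a discriminant inequality on $s,v$ which, by the Fricke identity $\mathrm{tr}[X,U]=(\mathrm{tr}X)^2+(\mathrm{tr}U)^2+(\mathrm{tr}XU)^2-\mathrm{tr}X\,\mathrm{tr}U\,\mathrm{tr}XU-2$, is exactly $\mathrm{tr}[X,U]\ge2$. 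Hence $[x,u]$ has a lift of trace $\ge2$, so it is trivial, parabolic, or hyperbolic, in particular never a nontrivial torsion element. (Equivalently: $[x,u]$ is a composite of a rotation by $\theta$ and a rotation by $-\theta$ of the hyperbolic plane about two points, hence trivial or hyperbolic.)

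\emph{Honda.} Let $[x,u]=z$ with $\langle z\rangle=\langle z^r\rangle$. If $\mathrm{ord}(z)=\infty$ then $r\in\{\pm1\}$ and $z^r\in\{[x,u],[u,x]\}$ is a commutator. If $\mathrm{ord}(z)<\infty$, then $z^r$ has finite order, so by Lemma~\ref{Lemma: PSL TOR} it is trivial or conjugate to some $\sigma_a$ with $a\in(\QQ/\ZZ)\setminus\{0\}$, and it therefore suffices to show each such $\sigma_a$ is a commutator. I would exhibit this directly: with $[a]\in(0,1)$ a representative of $a$, let $V=\begin{pmatrix}2&0\\0&\tfrac12\end{pmatrix}$ and $W=\begin{pmatrix}1&b\\b&1+b^2\end{pmatrix}\in\mathrm{SL}_2(\RR)$, where $b=\sqrt{\tfrac89\bigl(1-\cos\pi[a]\bigr)}>0$. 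A short computation gives $\mathrm{tr}[V,W]=2-\tfrac94b^2=2\cos\pi[a]\in(-2,2)$, so $[V,W]$ is elliptic and, by the trace classification of elliptic elements in the proof of Lemma~\ref{Lemma: PSL TOR}, its image in $\PSL$ is conjugate to $\sigma_a$ or to $\sigma_{-a}=\sigma_a^{-1}$. Since conjugates and inverses of commutators are commutators ($[W,V]=[V,W]^{-1}$), $\sigma_a$ is a commutator in either case.

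These case analyses exhaust all possibilities, so $\PSL$ is quasi-Honda and Honda. I expect the main obstacle to be the structural claim in the quasi-Honda part — that a commutator built from a finite-order element of $\PSL$ cannot be a nontrivial torsion element — for which the trace computation above seems the most self-contained route, with hyperbolic geometry as a fallback; a secondary, minor point is resolving the $\sigma_a$-versus-$\sigma_{-a}$ ambiguity in the Honda part, which the identity $[W,V]=[V,W]^{-1}$ settles.
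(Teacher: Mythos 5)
Your proposal is correct, but it takes a genuinely different route from the paper in both halves. For the Honda property the paper simply invokes Thompson's theorem that every element of $\mathrm{PSL}_n(K)$ is a commutator for $\#K>3$, whereas you construct, for each prescribed elliptic trace $2\cos\pi[a]$, an explicit pair $V,W$ with $\mathrm{tr}[V,W]=2-\tfrac94b^2=2\cos\pi[a]$ (your computation checks out), and the $\sigma_a$-versus-$\sigma_{-a}$ ambiguity is indeed harmless via $[W,V]=[V,W]^{-1}$. For the quasi-Honda property the paper follows the same skeleton as you (reduce to $x,z$ of finite order by Proposition \ref{Proposition: x or z infinite order quasi-Honda}, then show $z=1$), but it derives $z=1$ from Lemma \ref{Lemma: Orevkov}, i.e.\ Orevkov's criterion for $1\in C_aC_bC_c$, applied to $1=x\,({}^yx^{-1})\,z^{-1}$; you instead prove the key structural fact by a trace argument: for elliptic $X$ the constraint that $U\in\mathrm{SL}_2(\RR)$ be real forces, via the Fricke identity, $\mathrm{tr}[X,U]\ge 2$, so the commutator is trivial or of infinite order. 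That step is stated tersely but is sound: with $X$ a rotation by $\theta$, fixing $s=\mathrm{tr}\,U$ and $v=\mathrm{tr}\,XU$ determines $\beta:=b-c=(v-s\cos\theta)/\sin\theta$, and $ad-bc=1$ is solvable over $\RR$ only if $s^2+\beta^2\ge 4$, which with Fricke gives $\mathrm{tr}[X,U]=4\cos^2\theta-2+\sin^2\theta\,(s^2+\beta^2)\ge 2$ (equivalently, one can use the identity $\mathrm{tr}[X,U]=2+\sin^2\theta\,((a-d)^2+(b+c)^2)$ directly). So your argument is more elementary and self-contained — it avoids both Thompson's theorem and the Orevkov input — at the cost of explicit matrix computations; the paper's proof is shorter because Orevkov's lemma is already available there for other purposes, and it quotes Thompson rather than exhibiting commutators by hand.
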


\begin{proof}
Theorem 3 of \cite{Thompson} states that if $K$ is a field with more than 3 elements, then every element of $\mathrm{PSL}_n(K)$ is a commutator. Thus $\mathrm{PSL}_n(K)$ is Honda for such $K$, so in particular $\mathrm{PSL}_2(\mathbb{R})$ is Honda.

Let $r$ be an integer, $x,y,z \in \mathrm{PSL}_2(\mathbb{R})$ and suppose that $[x,y] = z$, $\langle x \rangle = \langle x^r \rangle$, $\langle z \rangle = \langle z^r \rangle$. 
We will show that there exists $w \in \mathrm{PSL}_2(\mathbb{R})$ with $[x,w] = z^r$. By Proposition \ref{Proposition: x or z infinite order quasi-Honda} we may assume that $x$ and $z$ have finite order. 
By Lemma \ref{Lemma: PSL TOR}, the non-identity elements in $\mathrm{PSL}_2(\mathbb{R})^{\mathrm{Tor}}$ are exactly the elements conjugate to some $\sigma_a$ where $a \in (\QQ/\ZZ)\setminus \{0\}$.
Suppose $z \neq 1$. Then $x \neq 1$, so $x = {}^g\sigma_a$, $z = {}^h\sigma_c$ with $g,h \in \mathrm{PSL}_2(\mathbb{R})$ and $a,c \in (\QQ/\ZZ)\setminus \{0\}$. We have $1 = x ({}^yx^{-1})z^{-1} = ({}^g\sigma_a) ({}^{yg}\sigma_{1-a}) ({}^h\sigma_{1-c})$, so if we write $[a],[1 - a],[1 -c] \in (0,1)$ for the respective representatives of $a,1-a,1-c$, then Lemma \ref{Lemma: Orevkov} gives $2 - [c] = [a]+[1 - a]+[1 -c] \notin (1,2)$. Contradiction, so $z = 1$. Thus $[x,y] = z = z^r$.
\end{proof} 

\section{Universal groups}
\label{Section: universal groups}

Recall that $B_{k,l,m} = \langle a,c \mid a^k=(a^{-1}c)^l=c^m=1 \rangle$ and $H_{k,m} = \langle a,b,c \mid [a,b] = c,\, a^k = c^m = 1 \rangle$.

We will now determine the structure of $H_{k,m}$ and $B_{k,k,m}$. Define 
$$G_{k,m} := \langle c_i \, (i \in \mathbb{Z}/k\mathbb{Z})\mid c_i^m = 1 \, (i \in \mathbb{Z}/k\mathbb{Z}), \, c_{k-1} \cdots c_1c_0 = 1 \rangle.$$  
One easily checks that there exists a unique $\sigma \in \mathrm{Aut}(G_{k,m})$ such that for all $i \in \mathbb{Z}/k\mathbb{Z}$ one has $\sigma(c_i) = c_{i+1}$. This $\sigma$ defines a semidirect product $G_{k,m} \rtimes \langle a \rangle$ where $a$ has order $k$, and the inner action of $a$ on $G_{k,m}$ is given by $\sigma$. Given this $\sigma$, one readily shows that there exists a unique $\tau \in \mathrm{Aut}(G_{k,m} \conv \langle b \rangle)$, where $b$ has infinite order, with $\tau|_{G_{k,m}} = \sigma$, $ \tau(b) = c_0b$. 
Notice that $\tau^k(b) = c_{k-1} \cdots c_1c_0b = b$, thus $\tau^k = \mathrm{id}$. So $\tau$ defines a semidirect product $(G_{k,m} \conv \langle b \rangle) \rtimes \langle a \rangle$ where $a$ has order $k$, and the inner action of $a$ on $G_{k,m} \conv \langle b \rangle$ is given by $\tau$. 
We deliberately use the same letter $a$ as in $B_{k,k,m}$ and $H_{k,m}$, and the same letter $b$ as in $H_{k,m}$. Proposition \ref{Proposition: structure universal groups} justifies this. In this proposition, $H_{k,m}$ and $B_{k,k,m}$ are written in terms of free and semidirect products, allowing us to easily embed $B_{k,k,m}$ in $H_{k,m}$.

\begin{proposition} \label{Proposition: structure universal groups}
Let $\langle a \rangle$, $\langle b \rangle$ be cyclic groups of orders $k$, $\infty$ respectively. 
\begin{propenum}
\item \label{Proposition: structure universal q-Burnside group}
There exists an isomorphism of groups $G_{k,m} \rtimes \langle a \rangle \xrightarrow{\sim} B_{k,k,m}: a \mapsto a$, $c_i \mapsto a^ica^{-i}$.
\item \label{Proposition: structure universal q-Honda group}
There exists an isomorphism of groups $(G_{k,m} \conv \langle b \rangle) \rtimes \langle a \rangle \xrightarrow{\sim} H_{k,m} : a \mapsto a$, $b \mapsto b$, $c_i \mapsto a^i c a^{-i}$. Moreover, $(G_{k,m} \conv \langle b \rangle) \rtimes \langle a \rangle = \langle a, b, c_i \, (i \in \mathbb{Z}/k\mathbb{Z}) \mid c_i^m = 1 \, (i \in \mathbb{Z}/k\mathbb{Z}), \, a^k=1, \, {}^{a}b = c_0b, \, {}^{a}c_i = c_{i+1} \, (i \in \mathbb{Z}/k\mathbb{Z}) \rangle$.
\item \label{Proposition: embedding}
There exists an embedding $B_{k,k,m} \hookrightarrow H_{k,m}: a \mapsto a$, $c \mapsto c$. 
\end{propenum}
\end{proposition}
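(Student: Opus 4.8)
The plan is to deduce all three statements from the standard presentation of a semidirect product: if $N = \langle X \mid R\rangle$ and $\langle a\rangle$ acts on $N$ through an automorphism $\alpha$ of order dividing $k$, then $N \rtimes \langle a\rangle = \langle X, a \mid R,\ a^k = 1,\ {}^ax = \alpha(x)\ (x\in X)\rangle$. First I would apply this to $N = G_{k,m}$ with $\alpha = \sigma$, and to $N = G_{k,m}\conv\langle b\rangle$ with $\alpha = \tau$, starting from the obvious presentations of $G_{k,m}$ and of the free product. Then I would transform each resulting presentation by Tietze moves (eliminating generators) until it becomes, on the nose, the given presentation of $B_{k,k,m}$ respectively $H_{k,m}$; part (c) then comes for free from (a) and (b).

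For (a): the semidirect product is
$$\langle c_0,\dots,c_{k-1},a \mid c_i^m = 1,\ c_{k-1}\cdots c_1c_0 = 1,\ a^k = 1,\ {}^ac_i = c_{i+1}\ (i\in\ZZ/k\ZZ)\rangle.$$
Using ${}^ac_i = c_{i+1}$ I would eliminate $c_1,\dots,c_{k-1}$ via $c_i = a^ic_0a^{-i}$ (consistent with the cyclic index since $a^k=1$); the relations $c_i^m=1$ all collapse to $c_0^m = 1$. The one computation requiring attention is that, after substitution and using $a^{k-1}=a^{-1}$, the relator $c_{k-1}\cdots c_1c_0$ telescopes:
$$\prod_{j=k-1}^{0} a^jc_0a^{-j} = a^{k-1}(c_0a^{-1})^{k-1}c_0 = a^{-1}(c_0a^{-1})^{k-1}c_0 = (a^{-1}c_0)^k.$$
So what survives is $\langle a,c_0 \mid a^k = (a^{-1}c_0)^k = c_0^m = 1\rangle$, i.e. the presentation of $B_{k,k,m}$ with $c_0\leftrightarrow c$, and tracing through the elimination gives $c_i = a^ic_0a^{-i}\mapsto a^ica^{-i}$, as claimed.

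For (b): the semidirect product presentation is the one above with an extra generator $b$ (otherwise unconstrained, since $b$ spans a free factor) and an extra relation ${}^ab = c_0b$. This already yields the ``moreover'' part once I observe that $c_{k-1}\cdots c_1c_0 = 1$ is now redundant: from ${}^ab = c_0b$ and ${}^ac_i = c_{i+1}$ one gets $c_i = {}^{a^{i+1}}b\cdot{}^{a^i}b^{-1}$, and then $\prod_{j=k-1}^{0} c_j$ telescopes to ${}^{a^k}b\cdot b^{-1} = bb^{-1} = 1$. To identify the group with $H_{k,m}$ I would instead eliminate $c_1,\dots,c_{k-1}$ as in (a) and then also eliminate $c_0$ via $c_0 = {}^ab\cdot b^{-1} = [a,b]$; then $c_0^m=1$ reads $[a,b]^m = 1$, while $(a^{-1}c_0)^k = 1$ reads $(ba^{-1}b^{-1})^k = ba^{-k}b^{-1} = 1$, a consequence of $a^k = 1$. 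The remaining presentation $\langle a,b \mid a^k = 1,\ [a,b]^m = 1\rangle$ is precisely $\langle a,b,c \mid [a,b] = c,\ a^k = c^m = 1\rangle = H_{k,m}$, with $a\mapsto a$, $b\mapsto b$, $c_i = a^ic_0a^{-i}\mapsto a^ica^{-i}$.

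For (c): the inclusion $G_{k,m}\hookrightarrow G_{k,m}\conv\langle b\rangle$ of a free factor is injective, and it is $(\sigma,\tau)$-equivariant because $\tau|_{G_{k,m}} = \sigma$; hence it induces an injective homomorphism $G_{k,m}\rtimes\langle a\rangle \to (G_{k,m}\conv\langle b\rangle)\rtimes\langle a\rangle$ (an equivariant injection of kernels together with the identity on $\langle a\rangle$ yields an injection of semidirect products). Transported through the isomorphisms of (a) and (b), this is an embedding $B_{k,k,m}\hookrightarrow H_{k,m}$ fixing $a$ and $c_0 = c$. One can also exhibit the homomorphism $B_{k,k,m}\to H_{k,m}$, $a\mapsto a$, $c\mapsto c$ directly (the relators map to $a^k$, $c^m$, and $(a^{-1}c)^k = (ba^{-1}b^{-1})^k$, all trivial in $H_{k,m}$), so the genuine content of (c) is its injectivity, which is exactly what the free-product structure supplies. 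I expect the main obstacle to be nothing conceptual but purely the bookkeeping: checking that the Tietze transformations in (a) and (b) neither add nor lose relations, and getting the two telescoping identities exactly right.
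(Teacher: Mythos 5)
Your proposal is correct and in substance follows the paper's route: the paper simply exhibits the obvious maps in both directions on generators and notes they are inverse homomorphisms (leaving the relation checks, including the telescoping identity $c_{k-1}\cdots c_1c_0 = (a^{-1}c_0)^k$ and the redundancy of $c_{k-1}\cdots c_1c_0=1$ given ${}^ab=c_0b$, to the reader), while you carry out the same verifications explicitly as Tietze eliminations. Part (c) is handled identically in both: the $(\sigma,\tau)$-equivariant inclusion of $G_{k,m}$ as a free factor induces the embedding of semidirect products, hence $B_{k,k,m}\hookrightarrow H_{k,m}$ with $a\mapsto a$, $c\mapsto c$.
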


\begin{proof}
The proof of (a) is similar to the proof of the first statement of (b), so we omit it. 
It is not hard to see that $(G_{k,m} \conv \langle b \rangle) \rtimes \langle a \rangle \to H_{k,m}: a \mapsto a$, $b \mapsto b$, $c_i \mapsto a^i c a^{-i}$ and $H_{k,m} \to (G_{k,m} \conv \langle b \rangle) \rtimes \langle a \rangle: a \mapsto a$, $b \mapsto b$, $c \mapsto c_0$ extend to inverse homomorphisms, which proves the first statement of (b). 
One easily checks that the relations $a^k = 1$, ${}^ab=c_0b$ and ${}^ac_i = c_{i+1}$ (for all $i \in \mathbb{Z}/k\mathbb{Z}$) imply that $c_{k-1}\cdots c_1c_0=1$, proving the second statement of (b).
Since $H_{k,m} \cong (G_{k,m} \conv \langle b \rangle) \rtimes \langle a \rangle$, $B_{k,k,m} \cong G_{k,m} \rtimes \langle a \rangle$ and the action of $\langle a \rangle$ on $G_{k,m}$ is identical in both groups, we may view $B_{k,k,m}$ as a subgroup of $H_{k,m}$.
\end{proof}

Note that given $s \in \mathbb{Z}/k\mathbb{Z}$, the automorphisms $\sigma^s$ and $\tau^s$ are well-defined, as $\sigma^k = \mathrm{id}$ and $\tau^k = \mathrm{id}$.
If $r \in (\mathbb{Z}/\lcm(k,m)\mathbb{Z})^*$ and $[r]$ is the smallest positive representative of $r$ modulo $\lcm(k,m)$, then we define $c_{r-1}\cdots c_1c_0:=c_{[r]-1}\cdots c_1c_0$.

\begin{lemma}
\label{Lemma: property of cr-1 cdots c1c0}
In $G_{k,m} \rtimes \langle a \rangle$ we have $c_{r-1}\cdots c_1c_0 = a^r \cdot (a^{-1}c_0)^r$.
\end{lemma}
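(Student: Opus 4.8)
The plan is to unwind the semidirect-product structure of $G_{k,m} \rtimes \langle a \rangle$ and compute both sides as words in the $c_i$. Recall that in $G_{k,m} \rtimes \langle a \rangle$ the element $a$ has order $k$, and conjugation by $a$ acts on $G_{k,m}$ by $\sigma$, i.e.\ ${}^a c_i = c_{i+1}$ for all $i \in \ZZ/k\ZZ$; equivalently $a c_i = c_{i+1} a$, so $a^j c_i = c_{i+j} a^j$ and $c_i a^{-1} = a^{-1} c_{i+1}$ for all $i,j$. The key observation is that $a^{-1} c_0 = a^{-1} c_0 a \cdot a^{-1} = ({}^{a^{-1}} c_0) a^{-1} = c_{-1} a^{-1}$. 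More generally, for any integer $j \ge 0$, $(a^{-1} c_0)^j = c_{-1} c_{-2} \cdots c_{-j} a^{-j}$, which one proves by a trivial induction: if it holds for $j$, then $(a^{-1}c_0)^{j+1} = (a^{-1}c_0) \cdot c_{-1}\cdots c_{-j} a^{-j} = a^{-1} c_0 c_{-1} \cdots c_{-j} a^{-j}$, and pushing the leading $a^{-1}$ past the block of $c$'s shifts every index down by one, giving $c_{-1} c_{-2} \cdots c_{-(j+1)} a^{-1} \cdot a^{-j} = c_{-1}\cdots c_{-(j+1)} a^{-(j+1)}$.

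Next I would take $j = [r]$, the smallest positive representative of $r$ modulo $\lcm(k,m)$, so that by definition $(a^{-1}c_0)^r = c_{-1} c_{-2} \cdots c_{-[r]} \, a^{-[r]}$. Then
$$
a^r (a^{-1}c_0)^r = a^{[r]} \cdot c_{-1} c_{-2} \cdots c_{-[r]} \cdot a^{-[r]} = ({}^{a^{[r]}}c_{-1})({}^{a^{[r]}}c_{-2}) \cdots ({}^{a^{[r]}}c_{-[r]}) = c_{[r]-1} c_{[r]-2} \cdots c_{0},
$$
where the last equality uses that conjugation by $a^{[r]}$ shifts each index by $[r]$, sending $c_{-t}$ to $c_{[r]-t}$ for $t = 1, \dots, [r]$. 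This is exactly $c_{r-1}\cdots c_1 c_0$ by the definition given just above the lemma. Since $c_0$ is the image of $c$ under the isomorphism of Proposition~\ref{Proposition: structure universal q-Burnside group}, and $a \mapsto a$, this also reads $c_{r-1}\cdots c_1 c_0 = a^r (a^{-1}c)^r$ in $B_{k,k,m}$.

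I do not anticipate a genuine obstacle here; the only point requiring a little care is that the indices of the $c_i$ live in $\ZZ/k\ZZ$ while $[r]$ is a genuine positive integer, so one must check the index bookkeeping is consistent — in particular that the product $c_{-1} c_{-2} \cdots c_{-[r]}$ with more than $k$ factors is still a well-defined element (it is, since each subscript is read modulo $k$), and that the final relabelling matches the stipulated meaning of $c_{r-1}\cdots c_1 c_0 := c_{[r]-1}\cdots c_1 c_0$. One might also remark that the identity is insensitive to the choice of representative $[r]$: replacing $[r]$ by $[r] + \lcm(k,m)$ multiplies the $a$-powers trivially (since $\lcm(k,m)$ is a multiple of $k$ and $a^k = 1$) and appends a full extra cycle of the $c_i$, whose product is $1$ by the defining relation $c_{k-1}\cdots c_1 c_0 = 1$ together with its $\sigma$-translates — but since the lemma fixes $[r]$ to be the smallest positive representative, this remark is optional.
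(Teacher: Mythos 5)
Your proposal is correct and is essentially the paper's own argument: the paper likewise just rewrites $(a^{-1}c_0)^r$ using $a^s c_0 a^{-s} = c_s$ (a one-line telescoping $(a^{-1}c_0)^r = a^{-r}\,(a^{r-1}c_0a^{-(r-1)})\cdots(ac_0a^{-1})c_0 = a^{-r}c_{r-1}\cdots c_1c_0$), which is the same computation you carry out via induction and a final conjugation by $a^{[r]}$. Your bookkeeping remarks about indices in $\ZZ/k\ZZ$ and the fixed representative $[r]$ are fine but not needed beyond what the paper's definition already stipulates.
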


\begin{proof}
For all $s \in \mathbb{Z}/k\mathbb{Z}$ we have $a^s c_0 a^{-s} = \sigma^s(c_0) = c_s$, so 
\[
(a^{-1}c_0)^r = a^{-r}a^{r-1}c_0a^{-(r-1)}a^{r-2}c_0 \cdots a^{-2}a c_0 a^{-1} c_0 = a^{-r}c_{r-1}\cdots c_1c_0. \qedhere
\]
\end{proof}

The following proposition and lemma will be used in \S \ref{Section: Passing to a subgroup} to prove the equivalence (1) $\Leftrightarrow$ (2) of Theorem \ref{introduction theorem qH}.
Recall that if $\gcd(k,m) \le 2$, then given $r \in (\ZZ/\lcm(k,m)\ZZ)^*$ there exists a unique element $r^* \in (\ZZ/\lcm(k,m)\ZZ)^*$ with $r^* \equiv r \pmod{k}$ and $r^* \equiv -r \pmod{m}$ (Lemma \ref{Lemma: Chinese remainder theorem}).

\begin{proposition}
\label{Proposition: universal groups conclusion}
Let $r \in (\mathbb{Z}/\lcm(k,m)\mathbb{Z})^*$.
\begin{propenum}
    \item \label{Proposition: universal groups conclusion qBurnside}
    Every group is $(k,k,m,r)$-quasi-Burnside if and only if there exist $x,y \in G_{k,m}$ and $i \in \mathbb{Z}/k\mathbb{Z}$ such that $\sigma^r(x)c_{r-1}\cdots c_1c_0x^{-1} = {}^yc_i^r$.
    \item \label{Proposition: quasi semi Burnside conclusion}
    Suppose $\gcd(k,m) \leq 2$. Then every group is $(k,k,m,r^*)$-quasi-Burnside if and only if there exist $x,y \in G_{k,m}$ and $i \in \ZZ/k\ZZ$ such that $(c_{r-1} \cdots c_1c_0)^{-1} \sigma^r(x)x^{-1} = {}^yc_i^{r}$.
    \item \label{Proposition: universal groups conclusion qHonda}
    Every group is $(k,m,r)$-quasi-Honda if and only if there exist $v,w \in G_{k,m} \conv \langle b \rangle$ and $i \in \mathbb{Z}/k\mathbb{Z}$ such that $\tau^r(v)v^{-1} = {}^wc^r_i$.
\end{propenum}
\end{proposition}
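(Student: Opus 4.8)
The plan is to prove all three parts in parallel, since they follow the same pattern: translate the abstract property ``every group is $(k,k,m,\rho)$-quasi-Burnside'' (or ``$(k,m,r)$-quasi-Honda'') via the universal-group criteria of Proposition~\ref{Proposition: universal groups} into a statement inside $B_{k,k,m}$ or $H_{k,m}$, and then rewrite that statement inside the semidirect-product presentations of Proposition~\ref{Proposition: structure universal groups}, pushing everything down into $G_{k,m}$ by collecting the $\langle a\rangle$-part. The key bookkeeping device is Lemma~\ref{Lemma: property of cr-1 cdots c1c0}: in $G_{k,m}\rtimes\langle a\rangle$ one has $c_{r-1}\cdots c_1 c_0 = a^r\cdot(a^{-1}c_0)^r$, equivalently $(a^{-1}c_0)^r = a^{-r}\,c_{r-1}\cdots c_1 c_0$, which is exactly how the $a$-powers get absorbed.

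For part (a): by Proposition~\ref{Proposition: universal q-Burnside group} applied with $l=k$, every group is $(k,k,m,r)$-quasi-Burnside iff there exist $g,h\in B_{k,k,m}$ with $a^r\cdot{}^g(a^{-1}c)^r = {}^h c^r$. Identify $B_{k,k,m}$ with $G_{k,m}\rtimes\langle a\rangle$ via Proposition~\ref{Proposition: structure universal q-Burnside group}, so $c=c_0$. Substituting Lemma~\ref{Lemma: property of cr-1 cdots c1c0}, the left side becomes $a^r\cdot{}^g(a^{-r}c_{r-1}\cdots c_1 c_0)$. Now write $g = x a^{s}$ and $h = y a^{t}$ with $x,y\in G_{k,m}$, $s,t\in\ZZ/k\ZZ$; conjugating $c_0$ by $a^t$ gives $c_t$, and conjugating $a^{-r}c_{r-1}\cdots c_1c_0$ by $xa^s$ (using ${}^{a}c_i=c_{i+1}$, i.e. conjugation by $a$ is $\sigma$) together with the left factor $a^r$ makes all the $a$-powers cancel: one is left with an identity of the form $\sigma^{r}(x)\,c_{r-1}\cdots c_1 c_0\,x^{-1} = {}^{y}c_i^{\,r}$ in $G_{k,m}$, where $i$ absorbs $t$ and the relevant shift. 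The only subtle point is checking that the $a$-components on both sides must separately match — this forces the exponent of $a$ on the right to be $0$, which is what makes the resulting equation live in $G_{k,m}$; this uses that $G_{k,m}\rtimes\langle a\rangle \to \langle a\rangle$ is a well-defined homomorphism. Conversely, any such $x,y,i$ yields the required $g,h$ by reversing these substitutions. Care is needed that $i$ ranges over all of $\ZZ/k\ZZ$ and that $c_i^r$ is well-defined (it is, since $\mathrm{ord}(c_i)\mid m$ and $r$ is coprime to $m$).

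For part (c): the argument is identical with $B_{k,k,m}$ replaced by $H_{k,m}$, the isomorphism with $(G_{k,m}\conv\langle b\rangle)\rtimes\langle a\rangle$ from Proposition~\ref{Proposition: structure universal q-Honda group}, and $\tau$ in place of $\sigma$. By Proposition~\ref{Proposition: universal q-Honda group}, every group is $(k,m,r)$-quasi-Honda iff there are $d,e\in H_{k,m}$ with $[a^r,d] = {}^e c^r$, i.e. $a^r\, d\, a^{-r}\, d^{-1} = {}^e c^r$. Writing $d = v a^{s}$, $e = w a^{t}$ with $v,w\in G_{k,m}\conv\langle b\rangle$: the powers $a^{\pm s}$ in $d$ and $d^{-1}$ cancel, and conjugation by $a^r$ acts as $\tau^r$, so the left side collapses to $\tau^r(v)v^{-1}$; the right side, after absorbing $a^t$ (which sends $c=c_0$ to some $c_i$), becomes ${}^w c_i^{\,r}$. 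Again one must verify the $a$-exponents match, forcing the right-hand exponent to $0$ so the identity lands in $G_{k,m}\conv\langle b\rangle$; here one uses the presentation in the ``Moreover'' clause of Proposition~\ref{Proposition: structure universal q-Honda group} to see that $c_i^r$ makes sense and that conjugating $c_0$ by $a^t$ gives $c_t$. For part (b), combine part (a) with the relation $r^* = 1^*\cdot r$ and the observation (as in the proof of Proposition~\ref{Proposition: r^*}) that passing from $r$ to $r^*$ corresponds to replacing $c^r$ by $c^{-r}$ up to conjugacy; concretely, every group is $(k,k,m,r^*)$-quasi-Burnside iff there are $g,h$ with $a^r\cdot{}^g(a^{-1}c)^r\cdot{}^h c^r = 1$ — note the $c^r$ not $c^{-r}$, reflecting the sign flip mod $m$ — and carrying out the same absorption, now multiplying by $(c_{r-1}\cdots c_1 c_0)^{-1}$ on the left as dictated by $[r^*]\equiv[r]\pmod k$, gives $(c_{r-1}\cdots c_1 c_0)^{-1}\sigma^r(x)x^{-1} = {}^y c_i^{\,r}$.

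The main obstacle is not any single hard computation but the careful normal-form bookkeeping: each element of the semidirect product has a unique expression as (element of $G_{k,m}$ or $G_{k,m}\conv\langle b\rangle$) times a power of $a$, and one must track how conjugation by $a^j$ acts (as $\sigma^j$ resp.\ $\tau^j$) on each $c_i$ and on $b$, make sure the $a$-exponents cancel correctly — in particular deducing that the $a$-component of the right-hand side vanishes — and confirm that the index $i$ that appears can be any element of $\ZZ/k\ZZ$ (so no solutions are lost). The sign conventions in part (b), matching $+r\pmod k$ with $-r\pmod m$, are the place where an error is easiest to make, so I would state explicitly which form of the Proposition~\ref{Proposition: universal groups} criterion is being used there.
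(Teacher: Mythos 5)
Your proposal is correct and follows essentially the same route as the paper's proof: translate via Proposition \ref{Proposition: universal groups} into $B_{k,k,m}$ resp.\ $H_{k,m}$, pass to the semidirect-product presentations of Proposition \ref{Proposition: structure universal groups}, use Lemma \ref{Lemma: property of cr-1 cdots c1c0} to absorb the $a$-powers, and (for (b)) exploit $a^{r^*}=a^r$, $(a^{-1}c)^{r^*}=(a^{-1}c)^r$, $c^{r^*}=c^{-r}$ -- your direct absorption there is only a cosmetic variant of the paper's step of applying (a) at $r^*$ and then inverting and conjugating by $x^{-1}$. The one point to write out carefully is your ``relevant shift'' in (a): with $g=xa^{s}$ the middle string comes out as $\sigma^{s}(c_{r-1}\cdots c_1c_0)$, so it is not enough to adjust the index $i$; one must apply $\sigma^{-s}$ to the whole equation (the paper does this by conjugating the equation by the appropriate power of $a$), after which $x$, $y$ and $i$ are replaced by their $\sigma^{-s}$-images and $i=t-s$.
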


\begin{proof}
The proofs of (a) and (c) are similar, so we omit the proof of (c).
Suppose every group is $(k,k,m,r)$-quasi-Burnside. Then by Proposition \ref{Proposition: universal q-Burnside group}, there exist $g,h \in B_{k,k,m}$ such that $a^r \cdot {}^g(a^{-1}c)^r = {}^hc^r$, so by \ref{Proposition: structure universal q-Burnside group} there exist $g,h \in G_{k,m} \rtimes \langle a \rangle$ such that $a^r \cdot {}^g(a^{-1}c_0)^r = {}^hc_0^r$. 
Choose such $g,h$, and write $g = a^j\cdot x$ with $x \in G_{k,m}$ and $j \in \ZZ/k\ZZ$. Then $a^r \cdot {}^{a^j\cdot x}(a^{-1}c_0)^r = {}^hc_0^r$. 
By conjugating both sides by $a^{-j}$, and writing $a^{-j}\cdot h = y\cdot a^i$ with $y \in G_{k,m}$ and $i \in \ZZ/k\ZZ$, we find $a^r \cdot {}^x(a^{-1}c_0)^r = {}^{a^{-j}h}c_0^r = {}^{ya^i}c_0^r$.
By Lemma \ref{Lemma: property of cr-1 cdots c1c0} we have $a^r \cdot {}^x (a^{-r} c_{r-1} \cdots c_1c_0) = {}^{ya^i}c_0^r$, so $\sigma^r(x)c_{r-1}\cdots c_1c_0x^{-1} = {}^{ya^i}c_0^r = {}^yc_i^r$.
The entire argument is reversible, proving statement (a).

Suppose $\gcd(k,m) \leq 2$. By (a), every group being $(k,k,m,r^*)$-quasi-Burnside is equivalent to the existence of $x,y \in G_{k,m}$ and $i \in \ZZ/k\ZZ$ such that $\sigma^r(x)c_{r-1}\cdots c_1c_0x^{-1} = {}^yc_i^{-r}$
Choose such $x,y,i$.
Inverting both sides of the equation, and then conjugating by $x^{-1}$ gives $(c_{r-1} \cdots c_1c_0)^{-1} \sigma^r(x^{-1})x = {}^{x^{-1}y}c_i^{r}$, so for $x' = x^{-1}$, $y' = x^{-1}y$ we have 
$(c_{r-1} \cdots c_1c_0)^{-1} \sigma^r(x')(x')^{-1} = {}^{y'}c_{i}^{r}.$
This proves one implication of (b). By reading the argument backwards, one proves the other implication.
\end{proof}

\begin{lemma}
\label{Lemma: conclusion hemi demi}
Let $r \in (\ZZ/\lcm(k,m)\ZZ)^*$ and suppose $\frac{2}{k}+\frac{1}{m} \leq 1$. Then there do not exist $x,y \in G_{k,m}$ and $i \in \ZZ/k\ZZ$ such that $\sigma^r(x)x^{-1} = {}^yc_i^{r}$ or $(c_{r-1} \cdots c_1c_0)^{-1} \sigma^r(x)c_{r-1} \cdots c_1c_0x^{-1} = {}^yc_i^r$.
\end{lemma}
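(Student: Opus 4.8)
The plan is to transfer both equations into the von Dyck group $B_{k,k,m}$, where each turns into a single conjugacy relation, and then push everything into $\PSL$ through a well-chosen homomorphism, so that Lemma~\ref{Lemma: Orevkov} can forbid the resulting product of three conjugacy classes.

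Assume for contradiction that $x,y\in G_{k,m}$ and $i\in\ZZ/k\ZZ$ satisfy one of the two equations. I would work inside $B_{k,k,m}$, identified with $G_{k,m}\rtimes\langle a\rangle$ via Proposition~\ref{Proposition: structure universal q-Burnside group}, writing $c:=c_0$ and $b:=a^{-1}c$. Since $a$ acts on $G_{k,m}$ as $\sigma$, we have $\sigma^r(x)={}^{a^r}x$ and $c_i^r={}^{a^i}c^r$, and Lemma~\ref{Lemma: property of cr-1 cdots c1c0} gives $c_{r-1}\cdots c_1c_0=a^rb^r$. A short computation — in the second case using $\gamma^{-1}({}^{a^r}x)\gamma\,x^{-1}=b^{-r}xb^rx^{-1}$ for $\gamma=a^rb^r$ — rewrites the first equation as $[a^r,x]={}^{ya^i}c^r$ and the second as $[b^{-r},x]={}^{ya^i}c^r$. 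In either case $B_{k,k,m}$ contains an element $t$ of order dividing $k$ (namely $t=a$, respectively $t=b^{-1}$) and an element $x$ with $[t^r,x]\sim c^r$.

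Next I would construct a homomorphism $f\colon B_{k,k,m}\to\PSL$. As $\tfrac2k+\tfrac1m\le1$, the representatives in $(0,1)$ of $1/k,1/k,1/m$ add up to a number outside $(1,2)$, so by Lemma~\ref{Lemma: Orevkov} there are $\mu,\nu,\rho\in\PSL$ with $\mu,\nu\sim\sigma_{1/k}$, $\rho\sim\sigma_{1/m}$ and $\mu\nu\rho=1$. Then $\mu^k=\nu^k=\rho^m=1$ and $\mu^{-1}(\mu\nu)=\nu$ has order dividing $k$, so $a\mapsto\mu$, $c\mapsto\mu\nu$ respects the defining relations of $B_{k,k,m}$ and defines $f$; note $f(b)=f(a^{-1}c)=\nu$ and $f(c)^r=(\mu\nu)^r=\rho^{-r}$.

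Applying $f$ to $[t^r,x]\sim c^r$ yields $[\zeta,f(x)]\sim\rho^{-r}$ in $\PSL$, where $\zeta:=f(t)^r$ is conjugate to $\sigma_{r/k}$ (if $t=a$) or to $\sigma_{-r/k}$ (if $t=b^{-1}$). Since $[\zeta,f(x)]=\zeta\cdot{}^{f(x)}\zeta^{-1}$ is a product of an element of $C_{r/k}$ and an element of $C_{-r/k}$ and is conjugate to an element of $C_{-r/m}$, we obtain $1\in C_{r/k}\,C_{-r/k}\,C_{r/m}$ (the classes taken in a suitable order). But $r$ is a unit modulo $\lcm(k,m)$ and $k,m\ge2$, so $r/k$ and $r/m$ are nonzero in $\QQ/\ZZ$; hence the $(0,1)$-representatives of $r/k$ and $-r/k$ sum to $1$ while that of $r/m$ lies in $(0,1)$, and the total lies in $(1,2)$. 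Lemma~\ref{Lemma: Orevkov} then gives $1\notin C_{r/k}C_{-r/k}C_{r/m}$, a contradiction. The step most in need of care is verifying that $f$ is a homomorphism — that is, that $1\in C_{1/k}C_{1/k}C_{1/m}$ still holds in the boundary cases $\tfrac2k+\tfrac1m=1$, which it does since Lemma~\ref{Lemma: Orevkov} excludes only angle-sums strictly in $(1,2)$ — together with keeping track of which of $C_{\pm r/k}$ and $C_{\pm r/m}$ actually occurs in each of the two cases.
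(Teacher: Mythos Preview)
Your proof is correct, and it takes a genuinely different route from the paper's.

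The paper first observes, analogously to Proposition~\ref{Proposition: universal groups}, that the existence of $g,h\in B_{k,k,m}$ with $a^{-r}\cdot{}^ga^r={}^hc^r$ is equivalent to \emph{every} group having a certain ``hemi-Burnside'' property; it then invokes Proposition~\ref{Proposition: PSL not hemi-Burnside} to see that $\PSL$ lacks this property, so no such $g,h$ exist. The first equation is rewritten into exactly this form, and for the second equation the paper applies an explicit order-$2$ automorphism $\alpha$ of $B_{k,k,m}$ (sending $a\mapsto a^{-1}c$) to reduce to the first case. Your approach instead rewrites \emph{both} equations uniformly as $[t^r,x]\sim c^r$ with $t\in\{a,b^{-1}\}$ of order dividing $k$, builds one concrete homomorphism $B_{k,k,m}\to\PSL$ via Lemma~\ref{Lemma: Orevkov}, and then applies Lemma~\ref{Lemma: Orevkov} a second time to the resulting identity $1\in C_{r/k}C_{-r/k}C_{r/m}$, which is impossible since $[r/k]+[-r/k]+[r/m]=1+[r/m]\in(1,2)$. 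This is more direct: it bypasses both the universal-property detour and Proposition~\ref{Proposition: PSL not hemi-Burnside} (indeed, your final computation is essentially the core of that proposition's proof, extracted and used in place), and it dispenses with the automorphism $\alpha$. The paper's route, on the other hand, makes the link to the hemi-Burnside property explicit, which fits the paper's overall narrative of translating between group-theoretic properties and equations in universal groups.
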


\begin{proof}
Analogously to Proposition \ref{Proposition: universal groups}, one can show that every group $G$ has the property that for all $x,y,z \in G$ we have
$$xy = z, \, x^k = y^k = z^m = 1 \Rightarrow \exists x',x'',z' \in G : x'x''= z', \, x' \sim x^{-r}, \, x'' \sim x^r, \, z' \sim z^r$$ 
if and only if there exist $g,h \in B_{k,k,m}$ such that $a^{-r} \cdot {}^ga^r = {}^hc^r$. By Proposition \ref{Proposition: PSL not hemi-Burnside}, the group $\PSL$ does not have the property above, so such $g,h$ do not exist.
If $x,y \in G_{k,m}$ and $i \in \ZZ/k\ZZ$ satisfy $\sigma^r(x)x^{-1} = {}^yc_i^r$, then $a^r x a^{-r} x^{-1} = {}^{ya^i}c_0^r$. Conjugating both sides by $x^{-1}a^{-r}$ then gives $a^{-r}\cdot{}^{x^{-1}}a^r = {}^{x^{-1}a^{-r}ya^i}c_0^r$. Then by Proposition \ref{Proposition: structure universal q-Burnside group} there exist $g,h$ as above, giving a contradiction and proving the first statement of the lemma.

Let $x,y \in G_{k,m}$, $i \in \ZZ/k\ZZ$ such that $(c_{r-1} \cdots c_1c_0)^{-1} \sigma^r(x)c_{r-1} \cdots c_1c_0x^{-1} = {}^yc_i^r$. By Lemma \ref{Lemma: property of cr-1 cdots c1c0} we have ${}^yc_i^r = (c_{r-1} \cdots c_1c_0)^{-1} \sigma^r(x)c_{r-1} \cdots c_1c_0x^{-1} = ((a^{-1}c_0)^{-r}a^{-r})a^rxa^{-r}(a^r(a^{-1}c_0)^r)x^{-1}.$
Thus ${}^{ya^i}c_0^r = {}^yc_i^r = (a^{-1}c_0)^{-r}x(a^{-1}c_0)^rx^{-1}$.
By Proposition \ref{Proposition: structure universal q-Burnside group} there now exist $v,w \in B_{k,k,m}$ such that $(a^{-1}c)^{-r} \cdot {}^v(a^{-1}c)^r = {}^wc^r$. Define $\alpha: B_{k,k,m} \to B_{k,k,m}$, $a \mapsto a^{-1}c$, $c \mapsto a^{-1}ca$. One easily shows that $\alpha$ is a homomorphism and that $\alpha^2$ is the identity, so $\alpha$ is an automorphism of order 2 that sends $a^{-1}c$ to $a$. 
Applying $\alpha$ to our formula gives $a^{-r} \cdot {}^{\alpha(v)}a^r = {}^{\alpha(w)a^{-1}}c^r$, but in the beginning of the proof we showed that such elements $g:=\alpha(v)$, $h:=\alpha(w)a^{-1}$ do not exist.
\end{proof}

\section{Reduced words}
\label{Section: Reduced words}

In this section we define a special notion of a reduced word in a free product of a group and an infinite cyclic group. This special notion has the advantage that under certain automorphisms of this free product, the ``lengths'' of these reduced words are preserved. (Proposition \ref{Proposition: reduced word of tau(v)}.) We will use this special notion to prove a combinatorial group theoretic result (Proposition \ref{Proposition: elliminating b v2}), with which we can prove the equivalence (1) $\Leftrightarrow$ (2) of Theorem \ref{introduction theorem qH}.

Throughout this section, $G$ will be a group and $\langle b \rangle$ will be an infinite cyclic group.

\begin{definition}
Let $s$ be a non-negative integer. A reduced word of length $s$ in $G \conv \langle b \rangle$ is a sequence $(u_0, u_1,...,u_{2s})$ with $u_{2i} \in G$ for $0 \leq i \leq s$ and $u_{2i+1} \in \{b, b^{-1}\}$ for $0 \leq i < s$ such that there does not exist $i$ with $0<i<s$, $u_{2i} = 1$ and $u_{2i-1} = u_{2i+1}^{-1}$.
\end{definition}

Note that the length of a reduced word is the number of its letters that are equal to $b$ or $b^{-1}$.

\begin{proposition}
\label{Proposition: van der Waerden}
For each non-negative integer $s$, let $Y_s$ be the set of reduced words of length $s$ in $G \conv \langle b \rangle$. Then for every $g \in G \conv \langle b \rangle$ there is a unique non-negative integer $s$ and a unique $(u_0,...,u_{2s}) \in Y_s$ such that $g = u_0 u_1 \cdots u_{2s}$. \hfill \qedsymbol{}
\end{proposition}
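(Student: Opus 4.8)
The plan is to deduce this from the classical normal form theorem for free products (as in \S I.1.2 of \cite{Serre}), which states that every $g \in G \conv \langle b \rangle$ has a unique expression $g = w_1 w_2 \cdots w_n$ ($n \geq 0$) with the syllables $w_j$ lying alternately in $G \setminus \{1\}$ and in $\langle b \rangle \setminus \{1\}$. Our notion of reduced word differs from this only by bookkeeping: we allow the letter $1 \in G$, but only in positions where it cannot be absorbed, and we split each $\langle b \rangle$-syllable into single steps $b^{\pm 1}$. So the proof reduces to setting up a translation between the two kinds of normal form and checking it is a bijection.

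First I would describe the translation from classical to new. Given $g = w_1 \cdots w_n$ in classical reduced form, replace each $b$-syllable $w_j = b^e$ ($e \neq 0$) by the block consisting of $|e|$ letters equal to $b^{\operatorname{sign}(e)}$ with a letter $1 \in G$ inserted between consecutive ones; concatenate these blocks with the $G$-syllables $w_j$ in order; and finally, if necessary, prepend and/or append a letter $1 \in G$ so that the resulting sequence $(u_0,\dots,u_{2s})$ has its $G$-letters exactly in the even positions and its $b^{\pm1}$-letters exactly in the odd positions. I would check this sequence is a reduced word in the sense of the Definition: a forbidden configuration $u_{2i-1}=u_{2i+1}^{-1}$, $u_{2i}=1$ with $0<i<s$ would force two consecutive $b^{\pm1}$-letters of opposite sign with only a trivial $G$-letter between them, but within one block all $b$-letters share a sign, and at the junction of two blocks the letter $u_{2i}$ is a classical syllable in $G \setminus \{1\}$, hence nontrivial. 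This establishes the existence half of the proposition.

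Next I would exhibit the inverse translation to get uniqueness. Given a reduced word $(u_0,\dots,u_{2s})$, group its $b^{\pm1}$-letters into maximal runs of a fixed sign that are separated only by trivial $G$-letters, rewrite each run as a single power $b^e$, discard the intervening trivial $G$-letters, and discard any trivial $G$-letters remaining at the two ends; the reducedness condition guarantees that what is left alternates between $G\setminus\{1\}$ and $\langle b\rangle\setminus\{1\}$, i.e. is a classical reduced expression for the same element of $G \conv \langle b \rangle$. Since this round trip is the identity on classical reduced expressions, the first translation is injective on the level of group elements; combined with the classical uniqueness theorem, any two reduced words representing the same element must coincide. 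I expect the main obstacle to be purely organizational: making the two translations — especially the treatment of the endpoints and of maximal same-sign runs — precise enough that the verification of alternation, non-triviality, and the round-trip identity is airtight.

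If a self-contained argument is preferred (and perhaps more faithful to the name of the proposition), one can instead run van der Waerden's method directly: put $Y=\bigsqcup_{s\geq0}Y_s$, let $G$ act on $Y$ by multiplying into the slot $u_0$ (always legal, since the reducedness condition never constrains $u_0$), let $b$ act by the rule ``if the word is $(1,b^{-1},u_2,\dots,u_{2s})$ delete the first two entries, otherwise prepend $(1,b,\cdots)$'' and symmetrically for $b^{-1}$, check that these are mutually inverse bijections of $Y$, invoke the universal property of $G \conv \langle b \rangle$ to obtain a homomorphism $\Phi\colon G \conv \langle b \rangle \to \operatorname{Sym}(Y)$, and verify by induction on $s$ that $\Phi(u_0u_1\cdots u_{2s})$ sends the length-$0$ word $(1)$ to $(u_0,\dots,u_{2s})$; existence and uniqueness then both follow. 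Either route works, and I would pick whichever makes the downstream Proposition \ref{Proposition: reduced word of tau(v)} cleanest to state and prove.
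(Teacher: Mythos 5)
Your second, ``self-contained'' route is precisely the proof the paper gives: it lets $G$ act on $Y=\bigsqcup_{s\ge 0}Y_s$ by left multiplication in the slot $u_0$, lets $b$ act by the delete-or-prepend rule $\beta_{\pm1}$, checks $\beta_{-1}\beta_1=\beta_1\beta_{-1}=\mathrm{id}$, obtains a homomorphism $G\conv\langle b\rangle\to\mathrm{Sym}(Y)$, and then shows the evaluation map $\theta\colon Y\to G\conv\langle b\rangle$, $(u_0,\dots,u_{2s})\mapsto u_0\cdots u_{2s}$, is bijective by acting on the trivial word $(1)$: surjectivity from $\theta(g\cdot(1))=g\,\theta((1))=g$, injectivity from $\theta(w)\cdot(1)=w$, proved by induction on the length of $w$. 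Your sketch of this argument matches the paper's in every essential detail, so on that route there is nothing to add.

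Your first route, reducing the statement to the classical normal form theorem for free products, is genuinely different and viable: the dictionary you describe (split each syllable $b^{e}$ into $|e|$ letters $b^{\pm1}$ with trivial $G$-letters interleaved and pad the ends; conversely collapse maximal same-sign runs separated only by trivial $G$-letters) does give mutually inverse, element-preserving bijections between classical reduced expressions and reduced words in the sense of the Definition, and the crucial observation is the one you make, namely that reducedness forbids a trivial $G$-letter flanked by $b$ and $b^{-1}$, so every internal trivial letter lies inside a same-sign run. What this buys is that you can quote the normal form theorem from \cite{Serre} instead of re-running van der Waerden's trick; the price is the endpoint and run bookkeeping. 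One logical point needs repair, though: for uniqueness you verify the round trip classical $\to$ new $\to$ classical and conclude that the expansion map is injective, but that is not the direction you use. If $W,W'$ are reduced words with the same product, classical uniqueness gives that their collapses agree, and to conclude $W=W'$ you need the collapse map to be injective on reduced words, i.e.\ the other round trip new $\to$ classical $\to$ new must be the identity (every reduced word is recoverable from its collapse because all discarded trivial $G$-letters occupy forced positions, inside same-sign runs or at the two ends). This is true and is proved by exactly the bookkeeping you already outline, but as written the inference ``expansion injective plus classical uniqueness implies the reduced words coincide'' does not follow; add the second round-trip identity and the first route is complete as well.
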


We say that $(u_0,...,u_{2s})$ is the reduced word of $g$.

\begin{proof}
We follow a method that J.-P. Serre used in the proof of Theorem 1 in section 1.2 of \cite{Serre}, originally invented by B.L. van der Waerden \cite{vanderWaerden}.

Let $C = G \conv \langle b \rangle$ and let $Y = \bigsqcup_{s \in \mathbb{Z}_{\geq 0}} Y_s$ be the disjoint union of all $Y_s$'s. We will let $C$ act on $Y$ by letting $G$ and $\langle b \rangle$ act on $Y$ separately. For every non-negative integer $s$, we define the action of $G$ on $Y$ by the maps
$$\alpha_s: G \times Y_s \to Y_s: (v,(u_0,...,u_{2s})) \mapsto (vu_0,...,u_{2s}).$$
For all $\epsilon \in \{\pm 1\}$ we define the map 
$$\beta_\epsilon: Y \to Y: (u_0,...,u_{2s}) \mapsto \begin{cases} (u_2,...,u_{2s}), & \text{if } s \geq 1, \, u_0 = 1, \, u_1 = b^{-\epsilon}, \\
    (1,b^{\epsilon},u_0,...,u_{2s}), & \text{else}.
\end{cases}$$
One easily checks that $\beta_{-1}\beta_1 = \beta_1\beta_{-1} = \mathrm{id}$, thus $\beta_1$ is a permutation of $Y$. Hence we may define the action of $\langle b \rangle$ by the group homomorphism $\langle b \rangle \to \mathrm{Sym}(Y)$ sending $b$ to $\beta_1$.

We finish the proof by showing that the map $\theta: Y \to C$ sending  $(u_0,...,u_{2s})$ to $u_0 \cdots u_{2s}$ is a bijection. 
For all $w \in Y$ and all $h \in G \cup \{b,b^{-1}\}$ we have $\theta(hw) = h\theta(w)$, so $\theta(gw) = g \theta(w)$ for all $g \in C$. Thus for all $g \in C$ we have $g = g\cdot \theta((1)) = \theta(g\cdot (1))$, where $(1) \in Y$ is the word consisting of only $1\in G$, so $\theta$ is surjective. One can show, by induction on the length of $w$, that for all $w \in Y$ one has $\theta(w)\cdot(1) = w$. So if $\theta(w) = \theta(w')$, then $w = \theta(w) \cdot (1) = \theta(w') \cdot (1) = w'$, so $\theta$ is injective. 
\end{proof}

\begin{lemma}
\label{Lemma: middle letter}
Let $(u_i)_{i=0}^{2s}$, $(v_i)_{i=0}^{2s'}$ be reduced words in $G \conv \langle b \rangle$ with $s,s'$ non-negative integers. If $(u_0,...,u_{2s-1},u_{2s}v_0,v_1,...,v_{2s'})$ is not a reduced word in $G \conv \langle b \rangle$, then $s\geq 1$, $s' \geq 1$, $u_{2s-1} = v_1^{-1}$, and $u_{2s} v_0 = 1$. \hfill \qedsymbol{}
\end{lemma}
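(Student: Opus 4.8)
\textbf{Proof plan for Lemma \ref{Lemma: middle letter}.}

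The plan is to argue by contraposition, or rather directly by unpacking what it means for the concatenated sequence to fail to be reduced. By Proposition \ref{Proposition: van der Waerden} and the definition of a reduced word, a sequence fails to be reduced precisely when it contains an ``avoidable'' cancellation in the interior: some index $j$ with $0 < j < $ (half the length) at which the middle entry is $1 \in G$ and the two flanking $\langle b\rangle$-entries are inverse to each other. The sequence in question is
$$(u_0,\dots,u_{2s-1},\,u_{2s}v_0,\,v_1,\dots,v_{2s'}),$$
whose list of $G$-entries is $u_0,u_2,\dots,u_{2s-2},\,u_{2s}v_0,\,v_2,\dots,v_{2s'}$ and whose list of $\langle b\rangle$-entries is $u_1,u_3,\dots,u_{2s-1},\,v_1,v_3,\dots,v_{2s'-1}$. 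First I would observe that any interior bad index $j$ located strictly inside the ``$u$-part'' (i.e. where the offending $G$-entry is one of $u_2,\dots,u_{2s-2}$ with its original neighbours $u_{2i-1},u_{2i+1}$) would already witness that $(u_i)_{i=0}^{2s}$ is not reduced, contrary to hypothesis; symmetrically for a bad index strictly inside the ``$v$-part''. Hence the only possible bad index is the one straddling the junction, where the $G$-entry is exactly $u_{2s}v_0$ and its flanking $\langle b\rangle$-entries are $u_{2s-1}$ (on the left) and $v_1$ (on the right).

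For this junction index to even exist as an \emph{interior} index, we need a $\langle b\rangle$-entry on each side of $u_{2s}v_0$: the entry $u_{2s-1}$ exists iff $s \geq 1$, and the entry $v_1$ exists iff $s' \geq 1$. So the failure of reducedness forces $s \geq 1$ and $s' \geq 1$. Then the bad-index conditions read exactly $u_{2s}v_0 = 1$ and $u_{2s-1} = v_1^{-1}$, which are the two remaining conclusions. I would spell out the edge cases $s = 0$ or $s' = 0$ explicitly: if $s = 0$ the concatenated sequence is $(u_0 v_0, v_1,\dots,v_{2s'})$, which is reduced because $(v_i)$ is and left-multiplying the first $G$-entry by a group element never creates an interior cancellation; similarly for $s' = 0$. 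This confirms that when the concatenation fails to be reduced we must be in the case $s,s'\ge 1$ with the junction cancellation.

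The main (and really only) obstacle is purely bookkeeping: one has to be careful that the re-indexing of the concatenated sequence lines up so that ``interior bad index not at the junction'' genuinely translates into an interior bad index of one of the two original words. The subtlety is at the two positions adjacent to the junction on either side — e.g. the $G$-entry $u_{2s-2}$ in the concatenation still has neighbours $u_{2s-3}$ and $u_{2s-1}$, exactly as in the original word $(u_i)$, so no new cancellation can appear there; and the entry $u_{2s-1}$ is a $\langle b\rangle$-entry, so it is never itself the $G$-entry of a bad index. Once this alignment is checked, the statement follows immediately, so I would keep the write-up short — essentially the one paragraph of case analysis above, invoking Proposition \ref{Proposition: van der Waerden} and the definition of reduced word for the characterization of when reducedness fails.
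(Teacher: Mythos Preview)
Your proposal is correct and follows essentially the same approach as the paper's proof: locate the offending triple $(w_1,w_2,w_3)$ in the concatenated sequence, observe that it cannot lie entirely within the $u$-part or the $v$-part since those are reduced, and conclude it must straddle the junction, forcing $s,s'\ge 1$, $u_{2s}v_0=1$, and $u_{2s-1}=v_1^{-1}$. The only difference is that you spell out the edge cases $s=0$ or $s'=0$ and the bookkeeping more explicitly, whereas the paper dispatches the argument in three sentences; note also that you need not invoke Proposition~\ref{Proposition: van der Waerden} here, as the failure criterion comes straight from the definition of a reduced word.
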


\begin{proof}
Suppose that $w:=(u_0,...,u_{2s-1},u_{2s}v_0,v_1,...,v_{2s'})$ is not a reduced word. Then there exists a triplet $(w_1,w_2,w_3)$ of consecutive elements of $w$ for which one has $w_2 = 1$ and $w_1 = w_3^{-1} \in \{b,b^{-1}\}$. Since $(u_i)_{i=0}^{2s}$, $(v_i)_{i=0}^{2s'}$ are reduced words, $w_1,w_2,w_3$ are not all three elements of $(u_i)_{i=0}^{2s}$, and not all three elements of $(v_i)_{i=0}^{2s'}$. So $s \geq 1$, $s' \geq 1$, $w_1 = u_{2s-1}$, $w_2 = u_{2s}v_0$ and $w_3 = v_1$.
\end{proof}

\begin{definition}
We define $\mathrm{len}: G \conv \langle b \rangle \to \mathbb{Z}_{\geq 0}$ to be the function that sends an element of $G \conv \langle b \rangle$ to the length of its unique reduced word given by Proposition \ref{Proposition: van der Waerden}.
\end{definition}

\begin{proposition}
\label{Proposition: inverse is lengthpreserving}
For all $u \in G \conv \langle b \rangle$ we have $\mathrm{len}(u) = \mathrm{len}(u^{-1})$. More precisely, if $(u_i)_{i=0}^{2s}$ is the reduced word of some $u \in G \conv \langle b \rangle$ with $s$ a non-negative integer, then $(u_i^{-1})_{2s}^0$ is the reduced word of $u^{-1}$. \hfill \qedsymbol{}
\end{proposition}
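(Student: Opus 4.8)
The plan is to reduce everything to the uniqueness statement in Proposition \ref{Proposition: van der Waerden}: if I can exhibit, for a given $u$ with reduced word $(u_0,\dots,u_{2s})$, a candidate reduced word $(u_{2s}^{-1},u_{2s-1}^{-1},\dots,u_1^{-1},u_0^{-1})$ that (i) spells out $u^{-1}$ as a product in $G \conv \langle b\rangle$, and (ii) is genuinely a reduced word in the sense of the definition, then uniqueness forces it to be \emph{the} reduced word of $u^{-1}$, and in particular $\mathrm{len}(u^{-1}) = s = \mathrm{len}(u)$. So the proof has exactly two things to check.

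For (i): from $u = u_0 u_1 \cdots u_{2s}$ in the group $G \conv \langle b\rangle$ we get $u^{-1} = u_{2s}^{-1} u_{2s-1}^{-1} \cdots u_1^{-1} u_0^{-1}$ by the usual ``socks and shoes'' identity; this is immediate and needs no comment beyond noting that it is a product of the reversed, inverted letters. For (ii): I need to verify the reduced-word shape and the no-cancellation condition for the reversed sequence. The shape is automatic: reversing $(u_0,\dots,u_{2s})$ sends the even-indexed positions (elements of $G$) to even-indexed positions again, since $2s$ is even, so $u_{2i} \mapsto u_{2(s-i)}^{-1} \in G$ and $u_{2i+1} \mapsto u_{2(s-i)-1}^{-1} \in \{b,b^{-1}\}$ because $\{b,b^{-1}\}$ is closed under inversion. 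For the no-cancellation condition: suppose the reversed word had a ``bad'' position, i.e.\ some $0 < j < s$ with the $2j$-th entry of the reversed word equal to $1$ and its $(2j-1)$-st and $(2j+1)$-st entries mutually inverse. Translating the indices back, the $2j$-th entry of the reversed word is $u_{2(s-j)}^{-1}$, and its neighbours are $u_{2(s-j)+1}^{-1}$ and $u_{2(s-j)-1}^{-1}$; so $u_{2(s-j)}^{-1} = 1$ means $u_{2(s-j)} = 1$, and $u_{2(s-j)+1}^{-1} = (u_{2(s-j)-1}^{-1})^{-1}$ means $u_{2(s-j)-1} = u_{2(s-j)+1}^{-1}$, which says that position $s-j$ (with $0 < s-j < s$) is a bad position of the \emph{original} word $(u_0,\dots,u_{2s})$ — contradicting that it is reduced. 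Hence the reversed word is reduced.

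There is essentially no hard part here; the only thing requiring a moment of care is the index bookkeeping in the reversal, making sure ``even maps to even'' and that the forbidden-pattern indices line up correctly under $i \leftrightarrow s-i$. One small point worth stating explicitly at the start, to avoid an edge case, is that the claim is trivially true when $s = 0$ (then $u \in G$, $u^{-1} \in G$, and both have the one-letter reduced word of length $0$), so the argument above only needs $s \geq 1$ for the forbidden-pattern discussion to be non-vacuous. Once both (i) and (ii) are in place, Proposition \ref{Proposition: van der Waerden} delivers the ``more precisely'' statement, and $\mathrm{len}(u) = \mathrm{len}(u^{-1})$ is read off from the common length $s$.
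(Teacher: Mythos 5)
Your argument is correct and is exactly the routine verification the paper has in mind: the paper marks this proposition with a qed symbol and omits the proof as immediate, and your two checks (the reversed, inverted sequence spells $u^{-1}$ and satisfies the no-cancellation condition, with the index bookkeeping $i \leftrightarrow s-i$ done correctly) combined with the uniqueness in Proposition \ref{Proposition: van der Waerden} are precisely that verification. Nothing is missing.
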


\begin{lemma}
\label{Lemma: middle letters}
Let $u,v \in G \conv \langle b \rangle$ and let $(u_i)_{i=0}^{2s}$, $(v_i)_{i=0}^{2s'}$ be their respective reduced words with $s,s'$ non-negative integers. Then there exists an integer $0 \leq n \leq \min\{s,s'\}$ such that the sequence $w_n:=(u_{0},..., u_{2s-2n-1},u_{2s-2n}v_{2s'-2n}^{-1}, v_{2s'-2n-1}^{-1},...,v_0^{-1})$ is a reduced word. Moreover, for the smallest such integer $n$, we have that $w_n$ is the reduced word of $uv^{-1}$, and $\mathrm{len}(uv^{-1}) = s+s'-2n$.
\end{lemma}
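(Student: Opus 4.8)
The plan is to proceed by induction on $\min\{s,s'\}$, using Proposition \ref{Proposition: van der Waerden} (uniqueness of reduced words) as the central tool. First I would observe that the candidate sequences $w_0, w_1, \dots, w_{\min\{s,s'\}}$ are exactly the ``partially cancelled'' forms one gets when computing $uv^{-1}$ by concatenating the reduced word of $u$ with the reduced word of $v^{-1}$ (which by Proposition \ref{Proposition: inverse is lengthpreserving} is $(v_i^{-1})_{2s'}^{0}$) and then cancelling matched pairs from the junction outward. Concretely, $w_0$ is the naive concatenation $(u_0, \dots, u_{2s-1}, u_{2s}v_{2s'}^{-1}, v_{2s'-1}^{-1}, \dots, v_0^{-1})$, and passing from $w_n$ to $w_{n+1}$ is legitimate precisely when $w_n$ fails to be reduced at its junction. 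So the first task is to show that for some $n \le \min\{s,s'\}$ the sequence $w_n$ actually is a reduced word — i.e. that the cancellation process terminates before running out of letters on one side.

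For existence, I would argue contrapositively: if $w_n$ is not reduced for every $n$ with $0 \le n < \min\{s,s'\}$, I want to show $w_{\min\{s,s'\}}$ is reduced. The key point is that each $w_n$ is obtained from $w_{n-1}$ by deleting the outermost matched pair, and since $(u_i)$ and $(v_i)$ are individually reduced, the only place $w_n$ can fail to be reduced is at its junction triple $\big(u_{2s-2n-1},\, u_{2s-2n}v_{2s'-2n}^{-1},\, v_{2s'-2n-1}^{-1}\big)$ — this is the content of Lemma \ref{Lemma: middle letter} applied to the reduced words $(u_i)_{i=0}^{2s-2n}$ and $(v_i^{-1})_{2s'}^{2s'-2n}$ (the latter reduced by Proposition \ref{Proposition: inverse is lengthpreserving}). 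Hence if $w_n$ is not reduced then $u_{2s-2n-1} = v_{2s'-2n-1}$ and $u_{2s-2n}v_{2s'-2n}^{-1} = 1$, which licenses forming $w_{n+1}$; iterating, when we reach $n = \min\{s,s'\}$ one of the two tails is exhausted, the junction triple no longer has a letter on the short side, and $w_{\min\{s,s'\}}$ is automatically reduced. This gives existence of the required $n$; let $n_0$ be the smallest such.

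It remains to identify $w_{n_0}$ with the reduced word of $uv^{-1}$. For this I would simply check that the product of the entries of $w_{n_0}$ equals $uv^{-1}$ in $G \conv \langle b \rangle$: indeed $u = u_0 \cdots u_{2s}$ and $v^{-1} = v_{2s'}^{-1} \cdots v_0^{-1}$, so $uv^{-1} = u_0 \cdots u_{2s} v_{2s'}^{-1} \cdots v_0^{-1}$, and for each $n < n_0$ the relations $u_{2s-2n-1} = v_{2s'-2n-1}$ (an element of $\{b, b^{-1}\}$, so $u_{2s-2n-1}^{-1} = v_{2s'-2n-1}^{-1}$, giving $b^{\pm 1} b^{\mp 1} = 1$ after the central entry vanishes) and $u_{2s-2n}v_{2s'-2n}^{-1} = 1$ let us collapse the innermost three entries, showing inductively that the product telescopes down to the product of the entries of $w_{n_0}$. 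Since $w_{n_0}$ is a reduced word whose product is $uv^{-1}$, Proposition \ref{Proposition: van der Waerden} forces $w_{n_0}$ to be \emph{the} reduced word of $uv^{-1}$, and its length is the number of $b^{\pm 1}$ entries, namely $(s - n_0) + (s' - n_0) = s + s' - 2n_0$.

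I expect the main obstacle to be bookkeeping rather than conceptual: one must be careful that at each cancellation step the ``middle'' entry really is $u_{2s-2n}v_{2s'-2n}^{-1}$ and not something else, that indices stay in range, and that the case distinctions when $n_0 = s$ or $n_0 = s'$ (one tail empty) are handled — in those boundary cases the sequence $w_{n_0}$ starts or ends at the merged central $G$-entry and reducedness is immediate. A subtle point to state carefully is why minimality of $n_0$ is what makes $w_{n_0}$ reduced: for $n < n_0$ non-reducedness of $w_n$ is exactly what we \emph{used} to define $w_{n+1}$, so there is no circularity, but this should be spelled out. Everything else is a routine application of Lemmas \ref{Lemma: middle letter} and Propositions \ref{Proposition: van der Waerden}, \ref{Proposition: inverse is lengthpreserving}.
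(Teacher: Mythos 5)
Your proposal is correct and follows essentially the same route as the paper: show $w_{\min\{s,s'\}}$ is reduced via Lemma \ref{Lemma: middle letter} (one side empty), take the smallest reduced $w_n$, use the failure of reducedness at the junction for smaller indices to telescope the product of $u_0\cdots u_{2s}v_{2s'}^{-1}\cdots v_0^{-1}$ down to the entries of $w_n$, and conclude by uniqueness of reduced words. The opening mention of induction on $\min\{s,s'\}$ is superfluous, since the argument you then describe is exactly the paper's direct one.
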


Note that $2n$ is the number of pairs of letters that cancel each other in $uv^{-1}$.

\begin{proof}
Let $0 \leq n \leq \min\{s,s'\}$ be an integer.
Proposition \ref{Proposition: inverse is lengthpreserving} gives that $(v_i^{-1})_{2s'}^0$ is a reduced word, so if $n = \min\{s,s'\}$, then Lemma \ref{Lemma: middle letter} gives that $w_n$ is a reduced word. Suppose $n$ is the smallest integer such that $w_n$ is a reduced word. Then for all $i$ with $0 \leq i < n$ we have that
$(u_{0},..., u_{2s-2i-1},u_{2s-2i}v_{2s'-2i}^{-1}, v_{2s'-2i-1}^{-1},...,v_0^{-1})$
is not a reduced word, so then Lemma \ref{Lemma: middle letter} gives $u_{2s-2i-1}u_{2s-2i}v_{2s'-2i}^{-1} v_{2s'-2i-1}^{-1} = 1$. Thus 
$uv^{-1} = u_{0}\cdots u_{2s-2n-1}\cdot u_{2s-2n} v_{2s'-2n}^{-1} \cdot v_{2s'-2n-1}^{-1}\cdots v_0^{-1},$
so $w_n$ is the reduced word of $uv^{-1}$. Now $\mathrm{len}(uv^{-1}) = s+s'-2n$.
\end{proof}

\section{Passing to a subgroup}
\label{Section: Passing to a subgroup}

In this section, we use the special notion of a reduced word from the last section to prove that there exists a solution of a certain equation in the free product of some group $G$ with an infinite cyclic group $\langle b \rangle$ if and only if there exists a solution of at least one of four equations in $G$ (Proposition \ref{Proposition: elliminating b v2}). 
These equations are very similar to those in the theory about when certain equations in $G_{k,m}$ and $G_{k,m}\conv \langle b \rangle$ have solutions (Proposition \ref{Proposition: universal groups conclusion} and Lemma \ref{Lemma: conclusion hemi demi}). 
This allows us to finish the proof of Theorem \ref{introduction theorem qH}, by applying Proposition \ref{Proposition: elliminating b v2} to our group $G_{k,m}$, and then using the theory from \S \ref{Section: universal groups}.

Analogously to the existence of $\tau$ just before Proposition \ref{Proposition: structure universal groups} we have that given a group $G$, an infinite cyclic group $\langle b \rangle$, $\varphi \in \mathrm{Aut}(G)$, and $p \in G$, there exists a unique $\psi \in \mathrm{Aut}(G \conv \langle b \rangle)$ with $\psi|_G = \varphi$, $\psi(b) = pb$.

\begin{proposition}
\label{Proposition: reduced word of tau(v)}
Let $\varphi \in \mathrm{Aut}(G)$, $p \in G$, and let $\psi$ be the unique automorphism of $G \conv \langle b \rangle$ with $\psi|_G = \varphi$, $\psi(b) = pb$. Then for all $v \in G \conv \langle b \rangle$ we have $\mathrm{len}(v) = \mathrm{len}(\psi(v))$. More precisely, if $(v_i)_{i=0}^{2s}$ is the reduced word of $v$ with $s$ a non-negative integer, define for all $0 \leq i < s$ the number $\epsilon_{2i+1} \in \{\pm1\}$ to be such that $v_{2i+1} = b^{\epsilon_{2i+1}}$, define
$$p_\epsilon = \begin{cases}
    p & \text{ if } \epsilon = 1, \\
    1 & \text{ if } \epsilon = -1,
\end{cases}$$
and define
$$\begin{cases}
u_0 = \varphi(v_0) p_{\epsilon_1}, \\
u_{2i} = (p_{-\epsilon_{2i-1}})^{-1} \varphi(v_{2i}) p_{\epsilon_{2i+1}} & \text{ if } 0<i<s, \\
u_{2i+1} = v_{2i+1} & \text{ if } 0 \leq i < s, \\
u_{2s} = (p_{-\epsilon_{2s-1}})^{-1} \varphi(v_{2s}).
\end{cases}$$
Then $(u_i)_{i=0}^{2s}$ is the reduced word of $\psi(v)$.
\end{proposition}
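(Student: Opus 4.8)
The plan is to apply $\psi$ directly to the reduced word of $v$, regroup the resulting product into the asserted shape $u_0u_1\cdots u_{2s}$, and then verify that this product satisfies the defining condition of a reduced word; the uniqueness part of Proposition~\ref{Proposition: van der Waerden} will then immediately identify it as \emph{the} reduced word of $\psi(v)$ and give $\mathrm{len}(\psi(v)) = s = \mathrm{len}(v)$.

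First I would dispose of the case $s = 0$: here $v = v_0 \in G$, so $\psi(v) = \varphi(v_0) \in G$ and the one-letter sequence $(\varphi(v_0))$ is vacuously a reduced word of length $0$. So assume $s \geq 1$. The computational core is the identity
$$\psi(b^{\epsilon}) = p_{\epsilon}\, b^{\epsilon}\, (p_{-\epsilon})^{-1} \qquad (\epsilon \in \{\pm 1\}),$$
which one checks in the two cases: for $\epsilon = 1$ the right-hand side equals $p\, b\, 1 = pb = \psi(b)$, and for $\epsilon = -1$ it equals $1\, b^{-1}\, p^{-1} = (pb)^{-1} = \psi(b)^{-1} = \psi(b^{-1})$. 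Using $\psi|_G = \varphi$, applying $\psi$ to $v = v_0 v_1 \cdots v_{2s}$ and substituting $\psi(v_{2i}) = \varphi(v_{2i})$ and $\psi(v_{2i+1}) = \psi(b^{\epsilon_{2i+1}}) = p_{\epsilon_{2i+1}}\, b^{\epsilon_{2i+1}}\, (p_{-\epsilon_{2i+1}})^{-1}$ gives
$$\psi(v) = \varphi(v_0)\,\bigl(p_{\epsilon_1} b^{\epsilon_1} p_{-\epsilon_1}^{-1}\bigr)\,\varphi(v_2)\,\bigl(p_{\epsilon_3} b^{\epsilon_3} p_{-\epsilon_3}^{-1}\bigr)\,\varphi(v_4)\cdots\bigl(p_{\epsilon_{2s-1}} b^{\epsilon_{2s-1}} p_{-\epsilon_{2s-1}}^{-1}\bigr)\,\varphi(v_{2s}).$$
Now I would absorb, for each $0 < i < s$, the factor $p_{\epsilon_{2i+1}}$ lying to the right of $\varphi(v_{2i})$ and the factor $(p_{-\epsilon_{2i-1}})^{-1}$ lying to its left into a single element $u_{2i} \in G$, absorb $p_{\epsilon_1}$ into $u_0$ and $(p_{-\epsilon_{2s-1}})^{-1}$ into $u_{2s}$, and leave the middle letters $u_{2i+1} = b^{\epsilon_{2i+1}} = v_{2i+1}$ untouched. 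The bookkeeping of indices works out exactly (the $(p_{-\epsilon_{2i+1}})^{-1}$ produced by $\psi(b^{\epsilon_{2i+1}})$ is precisely the left factor of $u_{2i+2}$), so this rewriting is an honest identity $\psi(v) = u_0 u_1 \cdots u_{2s}$ with the $u_i$ as in the statement.

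It then remains to show $(u_i)_{i=0}^{2s}$ is a reduced word, i.e. that there is no $i$ with $0 < i < s$, $u_{2i} = 1$ and $u_{2i-1} = u_{2i+1}^{-1}$. Since $u_{2i\pm1} = b^{\epsilon_{2i\pm1}}$, the condition $u_{2i-1} = u_{2i+1}^{-1}$ means $\epsilon_{2i-1} = -\epsilon_{2i+1}$, whence $p_{-\epsilon_{2i-1}} = p_{\epsilon_{2i+1}}$ and $u_{2i} = p_{\epsilon_{2i+1}}^{-1}\,\varphi(v_{2i})\,p_{\epsilon_{2i+1}}$ is a conjugate of $\varphi(v_{2i})$; as $\varphi$ is injective, $u_{2i} = 1$ would force $v_{2i} = 1$, which together with $v_{2i-1} = b^{\epsilon_{2i-1}} = b^{-\epsilon_{2i+1}} = v_{2i+1}^{-1}$ contradicts the reducedness of $(v_i)_{i=0}^{2s}$. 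Hence $(u_i)_{i=0}^{2s}$ is reduced, and by uniqueness in Proposition~\ref{Proposition: van der Waerden} it is the reduced word of $\psi(v)$, giving in particular $\mathrm{len}(\psi(v)) = s = \mathrm{len}(v)$.

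\textbf{Expected main obstacle.} There is no deep difficulty here; the proof is essentially bookkeeping. The one place that needs care is pinning down the conjugating elements $p_{\pm\epsilon}$ so that the regrouping in the middle step is an exact identity rather than one up to stray factors, and the final reduced-word check, which genuinely uses \emph{both} the injectivity of $\varphi$ and the hypothesis that $(v_i)_{i=0}^{2s}$ is reduced.
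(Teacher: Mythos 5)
Your proposal is correct and is essentially the paper's own argument: the paper dismisses the verification with ``one straightforwardly checks that $\psi(v) = u_0\cdots u_{2s}$ and that $(u_i)_{i=0}^{2s}$ is a reduced word,'' and your computation via $\psi(b^{\epsilon}) = p_{\epsilon}\,b^{\epsilon}\,(p_{-\epsilon})^{-1}$, the regrouping, and the reducedness check (using injectivity of $\varphi$ and reducedness of $(v_i)_{i=0}^{2s}$) is exactly that check carried out in detail, with the conclusion following from the uniqueness in Proposition \ref{Proposition: van der Waerden} just as in the paper.
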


\begin{proof}
One straightforwardly checks that $\psi(v) = u_0 \cdots u_{2s}$, and that $(u_i)_{i=0}^{2s}$ is a reduced word.
\end{proof}

\begin{proposition}
\label{Proposition: elliminating b v2}
Let $G$ be a group, $\langle b \rangle$ an infinite cyclic group, $\varphi \in \mathrm{Aut}(G)$, $p,q \in G$, and let $\psi$ be the unique automorphism of $G \conv \langle b \rangle$ with $\psi|_G = \varphi$, $\psi(b) = pb$. Then there exist $v,w \in G \conv \langle b \rangle$ such that $\psi(v)v^{-1} = {}^wq$ if and only if there exist $x,y \in G$ such that 
$$\varphi(x)x^{-1}={}^yq \text{, or } \varphi(x)px^{-1}={}^yq \text{, or } p^{-1}\varphi(x)x^{-1}={}^yq \text{, or } p^{-1}\varphi(x)px^{-1}={}^yq.$$
\end{proposition}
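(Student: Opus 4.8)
The plan is to prove the nontrivial direction (``only if'') by taking a solution $\psi(v)v^{-1} = {}^wq$ and extracting from it a solution of one of the four equations in $G$, via an induction on $\mathrm{len}(v) + \mathrm{len}(w)$; the ``if'' direction should be almost immediate, since each of the four equations in $G$ gives a solution in $G \conv \langle b \rangle$ by taking $v = x$ (for the first two) or $v = b^{\mp 1}x$ (for the last two), using that $\psi(b) = pb$ and $\psi(b^{-1}) = b^{-1}p^{-1}$.

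For the forward direction, first I would use Proposition \ref{Proposition: reduced word of tau(v)} to control $\mathrm{len}(\psi(v))$: it equals $\mathrm{len}(v) =: s$, and the reduced word of $\psi(v)$ is the explicit sequence $(u_i)_{i=0}^{2s}$ described there, whose odd letters $u_{2i+1} = v_{2i+1}$ match those of $v$. Then by Lemma \ref{Lemma: middle letters} applied to $\psi(v)$ and $v$, the reduced word of $\psi(v)v^{-1}$ is obtained by cancelling $2n$ letters from the middle for the smallest valid $n$; because the odd letters of $\psi(v)$ and $v$ agree, the cancellation condition $u_{2s-2i-1}u_{2s-2i}v_{2s-2i}^{-1}v_{2s-2i-1}^{-1} = 1$ from the proof of Lemma \ref{Lemma: middle letters} forces the middle $G$-letters to collapse in a very constrained way. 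The key observation I expect to drive the whole argument is that $\mathrm{len}(\psi(v)v^{-1})$ is \emph{even} and that $\psi(v)v^{-1}$ is conjugate (by ${}^{w^{-1}}$) to $q \in G$, hence $\psi(v)v^{-1}$ is conjugate to an element of length $0$; combined with the standard fact that conjugation in a free product cannot decrease length below the cyclically reduced core, this should force $\mathrm{len}(\psi(v)v^{-1}) = 0$, i.e.\ $n = s$, so that $\psi(v)v^{-1} = u_{2s - 2s}v_{2s-2s}^{-1} = \varphi(v_0)p_{\epsilon_1}\cdots$ telescopes. Tracking the telescoped product using the explicit $u_i$'s: writing $x := v_0$ (or an appropriate partial product of the even letters of $v$) and $y := w$, the surviving factors are exactly $\varphi(x) p_{\epsilon_1} p_{-\epsilon_{2s-1}}^{-1} x^{-1}$ up to the inner automorphism, and the four cases $p_{\epsilon_1} p_{-\epsilon_{2s-1}}^{-1} \in \{1, p, p^{-1}, 1\}$ (according to the signs $\epsilon_1, \epsilon_{2s-1}$, with the $pp^{-1}$ case absorbed) give precisely the four displayed equations.

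The step I expect to be the main obstacle is making rigorous the reduction ``a solution with $\mathrm{len}(v) > 0$ can be traded for one with smaller length, and in the base case $\mathrm{len}(v)$ small we read off the equation.'' The naive claim that $\psi(v)v^{-1}$ conjugate into $G$ forces $\mathrm{len}(v)\le 1$ is false in general (one can have $v$ long with $\psi(v)v^{-1}$ short), so instead I would set up a proper descent: assume $(v,w)$ is a solution minimizing $\mathrm{len}(v)$, and suppose for contradiction $\mathrm{len}(v) = s \ge 1$. Using Lemma \ref{Lemma: middle letters} on $\psi(v)v^{-1}$ together with the constraint that this element is conjugate to $q \in G$ (length $0$), analyze the two ends of the reduced word of $\psi(v)v^{-1}$ and of $w q w^{-1}$; a left-right comparison of reduced words in $G \conv \langle b \rangle$ (the normal form from Proposition \ref{Proposition: van der Waerden}) shows that the outermost letters of $w$ must cancel against the outermost letters of $\psi(v)v^{-1}$, and peeling off one $b^{\pm 1}$-letter from $v$ and correspondingly from $w$ — using $\psi(v \cdot b^{\mp 1}) = \psi(v) \cdot (pb)^{\mp 1}$ — produces a strictly shorter solution $(v', w')$, contradicting minimality. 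Hence $s = 0$ (or $s = 1$ in the cases producing the middle $p^{\pm 1}$), and in that base case the equation $\psi(v)v^{-1} = {}^w q$ reads directly as one of the four displayed equations with $x = v_0$ and $y = w$ after absorbing $b^{\pm1}$ conjugations into $y$. Throughout, I would lean on Proposition \ref{Proposition: inverse is lengthpreserving}, Lemma \ref{Lemma: middle letter}, and the explicit formula in Proposition \ref{Proposition: reduced word of tau(v)} to keep the letter-bookkeeping honest; the bookkeeping of which sign pattern $(\epsilon_1, \epsilon_{2s-1})$ yields which of the four equations is routine but must be done carefully so that all four genuinely occur and none is missed.
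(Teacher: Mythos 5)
Your ``$\Leftarrow$'' direction is fine in spirit, though the witnesses you list are off: $v=x$ only handles the first equation (the second needs $v=xb$, and the fourth needs $v=b^{-1}xb$, which has length $2$). The real issue is the ``$\Rightarrow$'' direction. Your first idea (that $\psi(v)v^{-1}$ being conjugate to the length-$0$ element $q$ forces $\mathrm{len}(\psi(v)v^{-1})=0$) is false, as you yourself note, and the descent you substitute for it has a genuine gap: the peeling step does not produce a shorter solution of the \emph{same} equation. Matching the outermost letters of $\psi(v)v^{-1}$ and ${}^{w}q$ gives $v_0=w_0$ and $\varphi(v_0)p_{\epsilon_1}=w_0$; writing $v=v_0b^{\epsilon_1}v''$ and $w=w_0b^{\epsilon_1}w''$ and cancelling, one gets $\psi(v'')(v'')^{-1}={}^{w''}q$ when $\epsilon_1=+1$, but when $\epsilon_1=-1$ one gets $p^{-1}\psi(v'')(v'')^{-1}={}^{w''}q$, a different equation, so minimality of $\mathrm{len}(v)$ is not contradicted. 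Moreover the conclusion ``$s=0$ or $s=1$'' cannot be right: a solution with $\mathrm{len}(v)\le 1$ only ever yields the first three displayed equations (the surviving middle letter can absorb at most one factor $p^{\pm 1}$), whereas the fourth equation $p^{-1}\varphi(x)px^{-1}={}^yq$ corresponds to $v=b^{-1}xb$ of length $2$ and need not come with a solution of the other three. A descent along your lines would therefore have to carry a whole family of equation shapes through the induction, which your sketch does not set up.

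The paper's proof needs no induction at all, and your own first paragraph is already within reach of it. Since $q\neq 1$, Lemma \ref{Lemma: middle letter} and Proposition \ref{Proposition: inverse is lengthpreserving} show the reduced word of ${}^wq$ is $(w_0,\dots,w_{2t-1},\,w_{2t}qw_{2t}^{-1},\,w_{2t-1}^{-1},\dots,w_0^{-1})$, of length $2t$; by Proposition \ref{Proposition: reduced word of tau(v)} and Lemma \ref{Lemma: middle letters} the reduced word of $\psi(v)v^{-1}$ is $(u_0,\dots,u_{2t-1},\,u_{2t}v_{2t}^{-1},\,v_{2t-1}^{-1},\dots,v_0^{-1})$. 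Uniqueness of reduced words (Proposition \ref{Proposition: van der Waerden}) lets one equate the middle letters, $w_{2t}qw_{2t}^{-1}=u_{2t}v_{2t}^{-1}$, and the explicit formula for $u_{2t}$ gives $u_{2t}\in\{\varphi(v_{2t}),\,\varphi(v_{2t})p,\,p^{-1}\varphi(v_{2t}),\,p^{-1}\varphi(v_{2t})p\}$, which is precisely one of the four equations with $x=v_{2t}$ and $y=w_{2t}$. So rather than repairing the descent, compare middle letters directly.
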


\begin{proof}
To prove the implication $\Leftarrow$, choose $v = x$, $w = y$ in the first case, $v=xb$, $w = y$ in the second case, $v = b^{-1}x$, $w = b^{-1}y$ in the third case and $v = b^{-1}xb$, $w = b^{-1}y$ in the fourth case.

As for the other implication: if $q = 1$, then any $y \in G$ satisfies $\varphi(1)1^{-1} = {}^yq$, so we may assume that $q \neq 1$. Let $v,w \in G \conv \langle b \rangle$ be such that $\psi(v)v^{-1} = wqw^{-1}$. Note that $\mathrm{len}(\psi(v)) = \mathrm{len}(v)$ by Proposition \ref{Proposition: reduced word of tau(v)}. Let $(u_i)_{i=0}^{2s}$, $(v_i)_{i=0}^{2s}$, $(w_i)_{i=0}^{2t}$ be the reduced words of respectively $\psi(v)$, $v$, $w$. As $q \neq 1$, we have $w_{2t}qw_{2t}^{-1} \neq 1$. 
Clearly $(w_0,...,w_{2t-1},w_{2t}q)$ is a reduced word, and by Proposition \ref{Proposition: inverse is lengthpreserving} the sequence $(w_i^{-1})_{i=2t}^0$ is a reduced word. Lemma \ref{Lemma: middle letter} then gives that $(w_0,...,w_{2t}qw_{2t}^{-1},...,w_0^{-1})$ is the reduced word of $wqw^{-1}$. 

We want to apply Lemma \ref{Lemma: middle letters} to $\psi(v)$ and $v$ to find the reduced word of $\psi(v)v^{-1}$. Since $\psi(v)v^{-1}  = wqw^{-1}$, we have $\mathrm{len}(\psi(v)v^{-1}) = \mathrm{len}(wqw^{-1}) = 2t$. Thus $n$ as in Lemma \ref{Lemma: middle letters} is equal to $s-t$. We have $u_{2s-2(s-t)} = u_{2t}$ and $v_{2s - 2(s-t)} = v_{2t}$, so Lemma \ref{Lemma: middle letters} gives that $(u_0,...,u_{2t-1},u_{2t}v_{2t}^{-1}, v_{2t-1}^{-1},...,v_0^{-1})$ is the reduced word of $\psi(v)v^{-1}$. As $\psi(v)v^{-1}  = wqw^{-1}$, the middle letters of their reduced words are equal, so $u_{2t}v_{2t}^{-1} = w_{2t}qw_{2t}^{-1}$.
Proposition \ref{Proposition: reduced word of tau(v)} gives 
$u_{2t} \in \{\varphi(v_{2t}), \varphi(v_{2t})p, p^{-1}\varphi(v_{2t}), p^{-1}\varphi(v_{2t})p\}$, so 
\[
w_{2t}qw_{2t}^{-1} = u_{2t}v_{2t}^{-1} \in \{\varphi(v_{2t})v_{2t}^{-1}, \varphi(v_{2t})pv_{2t}^{-1}, p^{-1}\varphi(v_{2t})v_{2t}^{-1}, p^{-1}\varphi(v_{2t})pv_{2t}^{-1}\}. \qedhere
\]
\end{proof}

\begin{lemma}
\label{Lemma: passing to subgrp our case}
Let $r \in (\ZZ/\lcm(k,m)\ZZ)^*$, let $i \in \mathbb{Z}/k\mathbb{Z}$, let $\langle b \rangle$ be an infinite cyclic group, and let $G_{k,m}$, $\sigma$, $\tau$, $c_{r-1} \cdots c_1c_0$ be defined as in \S \ref{Section: universal groups}. Suppose $\frac{2}{k}+\frac{1}{m} \le 1$.
\begin{lemenum}
\item \label{Lemma: passing to subgrp general}
Then there exist $v,w \in G_{k,m} \conv \langle b \rangle$ such that $\tau^r(v)v^{-1} = {}^wc_i^r$ if and only if there exist $x,y \in G_{k,m}$ such that $\sigma^r(x)c_{r-1} \cdots c_1c_0x^{-1}={}^yc_i^r$ or $(c_{r-1} \cdots c_1c_0)^{-1}\sigma^r(x)x^{-1}={}^yc_i^r$.
\item \label{Lemma: passing to subgrp gcd geq 3}
If $\gcd(k,m) > 2$, then there exist $v,w \in G_{k,m} \conv \langle b \rangle$ such that $\tau^r(v)v^{-1} = {}^wc_i^r$ if and only if there exist $x,y \in G_{k,m}$ such that $\sigma^r(x)c_{r-1} \cdots c_1c_0x^{-1}={}^yc_i^r$.
\end{lemenum}
\end{lemma}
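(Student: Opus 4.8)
The plan is to derive both parts from Proposition~\ref{Proposition: elliminating b v2}, applied to the group $G = G_{k,m}$, the automorphism $\varphi = \sigma^r$, the elements $p = c_{r-1}\cdots c_1 c_0$ and $q = c_i^r$, and the infinite cyclic group $\langle b\rangle$. First I would verify that the automorphism $\psi$ of $G_{k,m}\conv\langle b\rangle$ determined by $\psi|_{G_{k,m}} = \sigma^r$ and $\psi(b) = pb$ is exactly $\tau^r$: clearly $\tau^r|_{G_{k,m}} = \sigma^r$, and iterating $\tau(b) = c_0 b$ gives $\tau^j(b) = c_{j-1}\cdots c_1 c_0\, b$ for all $j\ge 0$, so $\tau^{[r]}(b) = pb$ by the convention defining $c_{r-1}\cdots c_1 c_0$; since $\tau^k = \mathrm{id}$, this automorphism is $\tau^r$. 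Proposition~\ref{Proposition: elliminating b v2} then says that there exist $v,w\in G_{k,m}\conv\langle b\rangle$ with $\tau^r(v)v^{-1} = {}^w c_i^r$ if and only if there exist $x,y\in G_{k,m}$ satisfying one of the four equations
\begin{gather*}
\text{(1) } \sigma^r(x)x^{-1} = {}^y c_i^r,\qquad \text{(2) } \sigma^r(x)\,c_{r-1}\cdots c_1 c_0\,x^{-1} = {}^y c_i^r,\\
\text{(3) } (c_{r-1}\cdots c_1 c_0)^{-1}\sigma^r(x)x^{-1} = {}^y c_i^r,\qquad \text{(4) } (c_{r-1}\cdots c_1 c_0)^{-1}\sigma^r(x)\,c_{r-1}\cdots c_1 c_0\,x^{-1} = {}^y c_i^r.
\end{gather*}

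For part~(a): equations (1) and (4) have no solutions in $G_{k,m}$ by Lemma~\ref{Lemma: conclusion hemi demi}, which is where the hypothesis $\tfrac2k+\tfrac1m\le 1$ enters. Hence the existence of $v,w$ is equivalent to the solvability of (2) or (3) in $G_{k,m}$, which is precisely statement~(a).

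For part~(b): it remains to show that when $\gcd(k,m) > 2$ equation (3) also has no solution. The point is that $c_j\mapsto 1$ defines a group homomorphism $\phi\colon G_{k,m}\to\mathbb{Z}/\gcd(k,m)\mathbb{Z}$, because $\gcd(k,m)$ divides $m$ (so the relations $c_j^m = 1$ are respected) and divides $k$ (so the relation $c_{k-1}\cdots c_1 c_0 = 1$ is respected); moreover $\phi\circ\sigma = \phi$. Applying $\phi$ to equation (3) and using commutativity of the target, the factor $\sigma^r(x)x^{-1}$ maps to $0$, so (3) forces $-\phi(c_{r-1}\cdots c_1 c_0)\equiv r\,\phi(c_i)\pmod{\gcd(k,m)}$. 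Since $\phi(c_{r-1}\cdots c_1 c_0) = [r]$, which is $\equiv r$, and $\phi(c_i) = 1$ modulo $\gcd(k,m)$, this gives $2r\equiv 0\pmod{\gcd(k,m)}$; as $r$ is a unit modulo $\lcm(k,m)$, and hence modulo $\gcd(k,m)$, we conclude $\gcd(k,m)\mid 2$, contradicting $\gcd(k,m) > 2$. So under the hypotheses of~(b) only equation (2) can be solvable, and together with~(a) this proves~(b).

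None of these steps is long; the places that require care are the identification $\psi = \tau^r$ (and hence the correct matching of the four equations produced by Proposition~\ref{Proposition: elliminating b v2} against the two equations in Lemma~\ref{Lemma: conclusion hemi demi}) and, for~(b), spotting the $\sigma$-equivariant homomorphism $G_{k,m}\to\mathbb{Z}/\gcd(k,m)\mathbb{Z}$ — the very map that becomes trivial exactly when $\gcd(k,m)\le 2$, which is why~(b) needs that hypothesis and~(a) does not.
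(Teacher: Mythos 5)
Your proposal is correct and follows essentially the same route as the paper: apply Proposition \ref{Proposition: elliminating b v2} with $\varphi=\sigma^r$, $p=c_{r-1}\cdots c_1c_0$, $q=c_i^r$ (noting $\psi=\tau^r$), kill equations (1) and (4) via Lemma \ref{Lemma: conclusion hemi demi} for part (a), and rule out equation (3) for part (b) by the homomorphism sending every $c_j$ to a generator of a cyclic group of order $\gcd(k,m)$, which is exactly the paper's $\chi$ written additively. Your extra verification that $\psi=\tau^r$ and that the homomorphism respects the relations is a fine (slightly more explicit) elaboration of what the paper leaves implicit.
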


\begin{proof}
Applying Proposition \ref{Proposition: elliminating b v2} to $G = G_{k,m}$, $\varphi = \sigma^r$, $p = c_{r-1} \cdots c_1c_0$, $q = c_i^r$, $\psi = \tau^r$ gives that there exist $v,w \in G_{k,m} \conv \langle b \rangle$ such that $\tau^r(v)v^{-1} = {}^wc_i^r$ if and only if there exist $x,y \in G_{k,m}$ such that 
$\sigma^r(x)x^{-1}={}^yc_i^r \text{, or } \sigma^r(x)c_{r-1} \cdots c_1c_0x^{-1}={}^yc_i^r \text{, or } (c_{r-1} \cdots c_1c_0)^{-1}\sigma^r(x)x^{-1}={}^yc_i^r \text{, or } (c_{r-1} \cdots c_1c_0)^{-1}\sigma^r(x)c_{r-1} \cdots c_1c_0x^{-1}={}^yc_i^r$. By Lemma \ref{Lemma: conclusion hemi demi}, there do not exist $x,y \in G_{k,m}$ such that $\sigma^r(x)x^{-1}={}^yc_i^r$ or $(c_{r-1} \cdots c_1c_0)^{-1}\sigma^r(x)c_{r-1} \cdots c_1c_0x^{-1}={}^yc_i^r$, proving (a).

Let $\langle \alpha \rangle$ be a cyclic group of order $\gcd(k,m)$, and define the group homomorphism $\chi: G_{k,m} \to \langle \alpha \rangle$ so that for all $i \in \mathbb{Z}/k\mathbb{Z}$ one has $\chi(c_i) = \alpha$. Notice that $\chi \circ \sigma^r = \chi$. 
Suppose $\gcd(k,m) > 2$. If there are $x,y \in G_{k,m}$ such that $(c_{r-1} \cdots c_1c_0)^{-1}\sigma^r(x)x^{-1}={}^yc_i^r$, then applying $\chi$ would give $\alpha^{-r} = \alpha^r$, so then $\alpha^{2r} = 1$. This contradicts $\gcd(k,m) > 2$, proving (b).
\end{proof}

\begin{proof}[Proof of the equivalence (1) $\Leftrightarrow$ (2) of Theorem \ref{introduction theorem qH}]
If $\frac{2}{k}+\frac{1}{m} \ge 1$, then by Theorem \ref{introduction theorem qB} every group is $(k,k,m,r)$-quasi-Burnside, so by Proposition \ref{Proposition: klmr quasi-Burnside implies kmr quasi-Burnside} every group is also $(k,m,r)$-quasi-Honda.

Suppose that $\frac{2}{k}+\frac{1}{m} < 1$.
We prove the equivalence in the case where $\gcd(k,m) >2$ and in the case where $\gcd(k,m) \le 2$.
By Proposition \ref{Proposition: universal groups conclusion qHonda}, every group is $(k,m,r)$-quasi-Honda if and only if there exist $v,w \in G_{k,m}$ and $i \in \ZZ/k\ZZ$ such that $\tau^r(v)v^{-1} = {}^wc_i^r$. 

If $\gcd(k,m) >2$, then Lemma \ref{Lemma: passing to subgrp gcd geq 3} gives that the existence of such $v,w,i$ is equivalent to the existence of $x,y \in G_{k,m}$ and $i \in \ZZ/k\ZZ$ such that $\sigma^r(x)c_{r-1} \cdots c_1c_0x^{-1}={}^yc_i^r$. By Proposition \ref{Proposition: universal groups conclusion qBurnside} this is equivalent to every group being $(k,k,m,r)$-quasi-Burnside.
If $\gcd(k,m) \leq 2$, then Lemma \ref{Lemma: passing to subgrp general} gives the existence of $v,w,i$ as above is equivalent to the existence of $x,y \in G_{k,m}$ and $i \in \ZZ/k\ZZ$ such that 
$\sigma^r(x)c_{r-1} \cdots c_1c_0x^{-1}={}^yc_i^r$ or $(c_{r-1} \cdots c_1c_0)^{-1}\sigma^r(x)x^{-1}={}^yc_i^r.$
Propositions \ref{Proposition: universal groups conclusion qBurnside} and \ref{Proposition: quasi semi Burnside conclusion} give that this is equivalent to every group being $(k,k,m,r)$-quasi-Burnside or every group being $(k,k,m,r^*)$-quasi-Burnside.
\end{proof}

\textbf{Acknowledgements.} 
I am extremely grateful to Hendrik Lenstra for his invaluable guidance during the writing of this paper, as well as for his excellent supervision over my bachelors and masters theses, which were on the same subject.
I thank Kevin van Yperen for providing the reference to the paper by S.Yu.\ Orevkov.
Thanks also to Ben Martin for pointing out that S.J. Pride gave a family of groups that are not Honda.


\bibliographystyle{plainurl}
\bibliography{References}

\end{document}